\newtheorem{prop}{Proposition}
\newtheorem{corollary}{Corollary}
\newtheorem{lemma}{Lemma}
\newtheorem{definition}{Definition}
\newtheorem{theorem}{Theorem}
\newcommand{\N}{\mathcal{N}}
\newcommand\footnoteref[1]{\protected@xdef\@thefnmark{\ref{#1}}\@footnotemark}
\tikzstyle{block} = [draw, fill=white, rectangle, 
\tikzstyle{circ} = [draw, fill=white, circle, minimum size=2.5em]
\tikzstyle{input} = [coordinate]
\tikzstyle{output} = [coordinate]
\tikzstyle{pinstyle} = [pin edge={to-,thin,black}]
\title{Provably Correct Learning Algorithms in the Presence of Time-Varying Features Using a Variational Perspective}
\author[1]{Joseph E. Gaudio}
\author[2]{Travis E. Gibson}
\author[1]{Anuradha M. Annaswamy}
\author[3]{Michael A. Bolender}
\affil[1]{Massachusetts Institute of Technology}
\affil[2]{Brigham and Women’s Hospital and Harvard Medical School}
\affil[3]{Air Force Research Laboratory}
\date{\today}
\begin{document}

\maketitle

\begin{abstract}
Features in machine learning problems are often time-varying and may be related to outputs in an algebraic or dynamical manner. The dynamic nature of these machine learning problems renders current higher order accelerated gradient descent methods unstable or weakens their convergence guarantees. Inspired by methods employed in adaptive control, this paper proposes new algorithms for the case when time-varying features are present, and demonstrates provable performance guarantees. In particular, we develop a unified variational perspective within a continuous time algorithm. This variational perspective includes higher order learning concepts and normalization, both of which stem from adaptive control, and allows stability to be established for dynamical machine learning problems where time-varying features are present. These higher order algorithms are also examined for provably correct learning in adaptive control and identification. Simulations are provided to verify the theoretical results.
\end{abstract}

\section{Introduction}
\label{s:Introduction}

As a field, machine learning has focused on both the processes by which computer systems automatically improve through experience, and on the underlying principles that govern learning systems \cite{Duda_2001,Bishop_2006,Jordan_2015,Goodfellow-et-al-2016}. A particularly useful approach for accomplishing this process of automatic improvement is to embody learning in the form of approximating a desired function and to employ optimization theory to reduce an approximation error as more data is observed. The field of adaptive control, on the other hand, has focused on the process of controlling engineering systems in order to accomplish regulation and tracking of critical variables of interest (e.g. speed in automotive systems, position and force in robotics, Mach number and altitude in aerospace systems, frequency and voltage in power systems) in the presence of uncertainties in the underlying system models, changes in the environment, and unforeseen variations in the overall infrastructure \cite{Sastry_1989,Astrom_1995,Ioannou1996,Narendra2005}. The approach used for accomplishing such regulation and tracking is to learn the underlying parameters through an online estimation algorithm. Stability theory is employed for enabling guarantees for the safe evolution of the critical variables, and convergence of the regulation and tracking errors to zero. In both machine learning and adaptive control the core algorithm is often inspired by gradient descent or gradient flow \cite{Narendra2005}. As the scope of problems in both fields increases, the associated complexity and challenges increase as well, necessitating a better understanding of how the underlying algorithms can be designed to enhance learning and stability for dynamical, real-time learning problems.

Modifications to standard gradient descent have been actively researched within the optimization community since computing began. The seminal accelerated gradient method proposed by \cite{Nesterov_1983} has not only received significant attention in the optimization community \cite{Nesterov_2004,Beck_2009,Bubeck_2015,Carmon_2018}, but also in the neural network learning community \cite{Krizhevsky_2012,Sutskever_2013}. Nesterov's original method, or a variant \cite{Duchi_2011,Kingma_2017,Wilson_2017} are the standard methods for training deep neural networks. To gain insight into Nesterov's method, which is a difference equation, reference \cite{Su_2016} identified the second order ordinary differential equation (ODE) at the limit of zero step size. Still pushing further in the continuous time analysis of these higher order methods, several recent results have leveraged a variational approach showing that, at least in continuous time, there exists a broad class of higher order methods where one can obtain an arbitrarily fast convergence rate \cite{Wibisono_2016,Wilson_2016}. Converting back to discrete time to obtain an implementable algorithm with rates matching that of the differential equation is also an active area of research \cite{Betancourt_2018,Wilson_2018}. It should be noted that in all the aforementioned work, while the parameter update algorithm is time-varying, the features and output of the cost function are static.

The adaptive control community has also analyzed several modifications to gradient descent over the past 40 years. These modifications have been introduced to ensure provably safe learning in the presence of both structured parametric uncertainty and unstructured uncertainty due to unmodeled dynamics, magnitude saturation, delays, and disturbances \cite{Ioannou_1984,Karason_1994,Bekiaris_Liberis_2010,Gibson2013}. A majority of these modifications are applied to first order gradient-like algorithms. One notable exception is the ``high-order tuner'' proposed by \cite{Morse_1992} which has been useful in providing stable algorithms for time-delay systems \cite{Evesque_2003}.

In this paper, we consider a general class of learning problems where features (regressors) are time-varying. In comparison, most higher order methods in the machine learning literature are analyzed for the case where features are assumed to be constant \cite{Wibisono_2016}. References \cite{Hopfield_1982,Hopfield_1984,Jordan_1986,Hochreiter_1997,Dietterich_2002,Jordan_2015,Kuznetsov_2015,Hall_2015,Zinkevich_2003,Hazan_2007,Hazan_2008,Hazan_2016,Shalev_Shwartz_2011,Raginsky_2010} emphasize the need for tools for problems where either the input features are time-varying, for time-series prediction, for recurrent networks with time-varying inputs, for sequential performance, or for online optimization. Capability to explicitly handle time-varying features sequentially processed online is essential as machine learning algorithms begin to be used in real-time safety critical applications. Utilizing a common variational perspective, inspired by \cite{Wibisono_2016}, this paper will aim to realize two objectives. The first objective is the derivation of a provably correct higher order learning algorithm for  regression problems with time-varying features. The second objective of this paper is to derive a provably correct higher order online learning algorithm for uncertain dynamical systems, as often occur in adaptive identification and control problems \cite{Narendra2005}. Both objectives are realized in this paper using the notion of a ``higher order tuner'', first introduced in adaptive control in \cite{Morse_1992} and formally analyzed within the context of time delay systems in \cite{Evesque_2003}. With the variational perspective in \cite{Wibisono_2016}, it will be shown that the high-order learning will lead to a provably correct algorithm when time-varying features are present in a machine learning problem, and that it leads to stable learning for a general class of dynamical systems, going beyond the specific problems considered in \cite{Morse_1992,Evesque_2003}.

We begin with a review of time-varying features to demonstrate how they may be related to outputs in an algebraic manner as well as through the states of a dynamical system. We then propose a new class of online algorithms inspired by the high order tuners of \cite{Morse_1992,Evesque_2003}, that take into account the time variation of the features, and provide \emph{guarantees of stability and convergence with a constant regret bound}. We propose the same high order tuners for adaptive control for convergence of model tracking errors. The derivation of these algorithms comes from a unified variational approach which relates the potential, kinetic, and damping characteristics of the algorithm, and allows for continuous time variation of the features. This paper is concluded with numerical experiments demonstrating the efficacy of the derived algorithms for time-varying regression as well as adaptive control of uncertain dynamical systems. Finally we note that while we focus on models that are linear in the parameters (for ease of exposition and clarity of presentation), nonlinearly parameterized models can also be analyzed using similar Lyapunov stability approaches \cite{Annaswamy_1996,Yu_1996,Yu_1998,Loh_1999,Cao_2003}.

\section{Warmup: time-varying features and model reference adaptive control}
\label{s:Warmup}

\subsection{Time-varying regression}
\label{ss:TV_Reg}

A time-varying regression system may be expressed as $y(t)=\theta^{*T}\phi(t)$, where $\theta^*,\phi(t)\in\mathbb{R}^N$ represent the unknown constant parameter and the known time-varying feature respectively. The variable $y(t)\in\mathbb{R}$ represents the known time-varying output. Given that $\theta^*$ is unknown, we formulate an estimator $\hat{y}(t)=\theta^T(t)\phi(t)$, where $\hat{y}(t)\in\mathbb{R}$ is the estimated output and the unknown parameter is estimated as $\theta(t)\in\mathbb{R}^N$.
Define the error between the actual output and the estimated output as
\begin{equation}\label{e:error1}
    e_y(t)=\hat{y}(t)-y(t)=\tilde{\theta}^T(t)\phi(t)
\end{equation}
where $\tilde{\theta}(t)=\theta(t)-\theta^*$ is the parameter estimation error. An overview of the time-varying regression error model may be seen in Figure \ref{f:Block_Diagram_Error_1_and_2}. The differential equation for the output error is of the form
\begin{equation}\label{e:error_model1_error}
\dot{e}_y(t)=\dot{\theta}^T(t)\phi(t)+\tilde{\theta}^T(t)\dot{\phi}(t).
\end{equation}
where the time variation of the feature $\dot{\phi}(t)$ can be seen to appear. The goal is to design a rule to adjust the parameter estimate $\theta(t)$ in a continuous manner using knowledge of $\phi(t)$ and $e_y(t)$ such that $e_y(t)$ converges towards zero. A continuous, gradient descent-like algorithm is desired as the output of the regression system $y(t)$ may be corrupted by noise and feature dimensions may be large. To do so, consider the squared loss cost function: $L=\frac{1}{2}e_y^2(t)$. The gradient of this function with respect to the parameters can be expressed as: $\nabla_{\theta}L=\phi(t)e_y(t)$. The standard gradient flow algorithm (the continuous time limit of gradient descent) may be expressed as follows with user-designed gain parameter $\gamma>0$ \cite{Narendra2005}:
\begin{equation}\label{e:update_GF}
\dot{\theta}(t)=-\gamma\nabla_{\theta}L=-\gamma \phi(t)e_y(t).
\end{equation}
The parameter error model may then be stated as $\dot{\tilde{\theta}}(t)=-\gamma\phi(t)\phi^T(t)\tilde{\theta}(t)$. Stability analysis of the algorithm in (\ref{e:update_GF}) for the error model in (\ref{e:error1}) is provided in Appendix \ref{ss:Stability_TV_Reg}.

\subsection{Model reference adaptive control and identification}
\label{ss:MRAC}

In the previous subsection, the output was an algebraic combination of the elements of the feature. In a class of problems (including adaptive identification and adaptive control) the features may be related to the errors of a dynamical system. To demonstrate this, features $\phi(t)\in\mathbb{R}^N$ may be related to a measurable state $x(t)\in\mathbb{R}^{n}$ through a dynamical system with unknown constant parameter $\theta^*\in\mathbb{R}^N$ as $\dot{x}(t)=Ax(t)+b(u(t)+\theta^{*T}\phi(t))$, where $u(t)\in\mathbb{R}$ is an input to the system.\footnote{In adaptive control the input is usually designed as $u(t)=-\theta^T(t)\phi(x(t))$.} A single input system is considered here for notational simplicity, where $A\in\mathbb{R}^{n\times n}$, and $b\in\mathbb{R}^{n\times1}$ are known stable dynamics and input matrices respectively. It can be noted that the results of this paper extend naturally to multiple input systems. Additionally, it should be noted that it is common in adaptive control for the feature to be a function of the state, i.e., $\phi(t)=\phi(x(t))$. This dynamical system is akin to a linearized recurrent neural network and is similar to the dynamical systems considered in \cite{Hazan_2017,Hazan_2018a,Recht_2018,Dean_2018,Dean_2018a,Dean2018}. Similar to the linear regression case where an output estimator was created with the same form as the time-varying system, but with an estimate of the unknown parameter, a state estimator with state $\hat{x}(t)\in\mathbb{R}^{n}$ may be designed for this system as $\dot{\hat{x}}(t)=A\hat{x}(t)+b(u(t)+\theta^T(t)\phi(t))$. Define the error between the considered dynamical system and estimator dynamical system as $e(t)=\hat{x}(t)-x(t)$. The error model for identification and control schemes may then be stated as
\begin{equation}
\label{e:error_model_2}
\dot{e}(t)=Ae(t)+b\tilde{\theta}^T(t)\phi(t)
\end{equation}
where the relation of the feature to the error can be seen to be through a differential equation, which is fundamentally different from (\ref{e:error1}). An overview of the dynamical error model may be seen in Figure \ref{f:Block_Diagram_Error_1_and_2}.
\begin{figure}[t]
    \centering
\begin{subfigure}[t]{0.4\textwidth}
    % \vspace{0.15cm}
	\centering
    \begin{tikzpicture}[auto, node distance=1.5cm,>=latex']
    
    \node [input, name=input1] {};
    \node [circ, right of=input1, very thick] (circle1) {};
    \node [output, name=output1, right of=circle1] {};
    \draw [->, very thick] (input1) -- node [name=phi1] {$\phi$} (circle1);
    \draw [->, very thick] (circle1) -- node [name=e_y1] {$e_y$} (output1);
    \node [input, name=arrowleft1, below left=0.25cm of circle1] {};
    \node [input, name=arrowright1, above right=0.25cm of circle1] {};
    \draw [->, thick] (arrowleft1) -- (arrowright1);
    \node[fill=white] at (circle1) {$\tilde{\theta}$};
    
    \end{tikzpicture}
    \label{sf:error1}
\end{subfigure}
    ~
\begin{subfigure}[t]{0.4\textwidth}
    % \vspace{0.15cm}
	\centering
    \begin{tikzpicture}[auto, node distance=1.5cm,>=latex']
    
    \node [input, name=input2, right of=output1, node distance=0.75cm] {};
    \node [circ, right of=input2, very thick] (circle2) {};
    \node [block, right of=circle2, very thick] (plant) {$W(s)$};
    \node [output, name=output2, right of=plant] {};
    \draw [->, very thick] (input2) -- node [name=phi2] {$\phi$} (circle2);
    \draw [->, very thick] (circle2) -- (plant);
    \draw [->, very thick] (plant) -- node [name=e_y_2] {$e$} (output2);
    \node [input, name=arrowleft2, below left=0.25cm of circle2] {};
    \node [input, name=arrowright2, above right=0.25cm of circle2] {};
    \draw [->, thick] (arrowleft2) -- (arrowright2);
    \node[fill=white] at (circle2) {$\tilde{\theta}$};
    
    \end{tikzpicture}
    \label{sf:error2}
\end{subfigure}
	\caption{Error models. Left: Regression (\ref{e:error1}). Right: Adaptive control (\ref{e:error_model_2}), $W(s):=(sI-A)^{-1}b$.}
	\label{f:Block_Diagram_Error_1_and_2}
\end{figure}
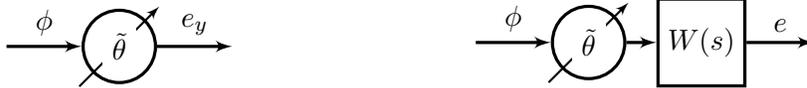
Rather than employing a gradient flow based rule, a stability based algorithm may be chosen as follows, with a gain $\gamma>0$ selected to adjust the learning rate \cite{Narendra2005}:
\begin{equation}\label{e:MRAC}
    \dot{\theta}(t)=-\gamma\phi(t) e^T(t)Pb
\end{equation}
where $P=P^T\in\mathbb{R}^{n\times n}$ is a positive definite matrix that solves the equation $A^TP+PA=-Q$, where $Q=Q^T\in\mathbb{R}^{n\times n}$ is a user selected positive definite matrix (see Appendix \ref{ss:Stability_Lyapunov}).\footnote{Open loop unstable plants may also be considered in the model tracking problem for a controllable system by choosing $\dot{\hat{x}}(t)=A_m\hat{x}(t)+b(u(t)+\theta^T(t)\phi(t))$, with $A_m\triangleq A-bK$ chosen stable with $K\in\mathbb{R}^{1\times n}$ \cite{Narendra2005}.} Comparing (\ref{e:MRAC}) to (\ref{e:update_GF}), it can be noticed that the structure is similar with the multiplication of the feature by the error. The difference between them is through the inclusion of elements from the differential equation relating the parameter error to the model tracking error (\ref{e:error_model_2}). Stability analysis of the update in (\ref{e:MRAC}) for the error model in (\ref{e:error_model_2}) is provided in Appendix \ref{ss:Stability_MRAC}.

\section{Algorithm derivation from a variational perspective}
\label{s:Derivation}

This section derives higher order update algorithms for both the time-varying regression, as well as the adaptive control and identification problems. For the time-varying regression problem, the goal is to derive a higher order algorithm to adjust the parameter estimate $\theta(t)$ (as compared to (\ref{e:update_GF})), for minimization of the estimation error $e_y(t)$ in the algebraic error model (\ref{e:error1}). For the adaptive control and identification problem, the goal is to derive a higher order algorithm to adjust $\theta(t)$ (as compared to (\ref{e:MRAC})), such that the error $e(t)$ in the dynamical error model in (\ref{e:error_model_2}) converges to zero. For the remainder of the paper, the notation of time dependence of variables will be omitted when it is clear from the context. We begin with a common variational perspective in order to derive our higher order algorithms. In particular the Bregman Lagrangian (see \cite{Wibisono_2016}, Equation 1) is restated below as
\begin{equation*}
    \mathcal{L}(\theta,\dot{\theta},t)=\text{e}^{\bar{\alpha}_t+\bar{\gamma}_t}\left(D_h(\theta+\text{e}^{-\bar{\alpha}_t}\dot{\theta},\theta)-\text{e}^{\bar{\beta}_t}L(\theta)\right)
\end{equation*}
where $D_h$ is the Bregman divergence defined with a distance-generating function $h$ as: $D_h(y,x)=h(y)-h(x)-\left<\nabla h(x),y-x\right>$. This Lagrangian can be seen to weight the potential energy (loss) $L(\theta)$ versus kinetic energy $D_h(\theta+\text{e}^{-\bar{\alpha}_t}\dot{\theta},\theta)$, with a term $\text{e}^{\bar{\alpha}_t+\bar{\gamma}_t}$ which adjusts the damping. The user defined time-varying parameters ($\bar{\alpha}_t,\bar{\beta}_t,\bar{\gamma}_t$) will be defined in the following section.

\subsection{Time-varying regression}
\label{ss:Accelerated_Error1}
For ease of exposition, we will use the squared Euclidean norm $h(x)=\frac{1}{2}\lVert x\rVert^2$ in the Bregman divergence along with the squared loss $L=\frac{1}{2}e_y^2$ as was used in Section \ref{ss:TV_Reg}. The following are our choice of the time-varying scaling parameters: $\bar{\alpha}_t=\ln(\beta\N_t)$, $\bar{\beta}_t=\ln(\gamma/(\beta\N_t))$, and $\bar{\gamma}_t=\int_{t_0}^t\beta\N_{\nu}d\nu$, where $\gamma,\beta>0$ are scalar design parameters and
\begin{equation}
    \label{e:N_t}
    \N_t\triangleq(1+\mu\phi^T\phi)
\end{equation}
with scalar $\mu>0$ is a function of the time-varying feature, and is referred to as a \emph{normalizing signal}. It can be noticed that the second ``ideal scaling condition'' (Equation 2b, $\dot{\bar{\gamma}}_t=e^{\bar{\alpha}_t}$) of \cite{Wibisono_2016} holds but the first ``ideal scaling condition'' (Equation 2a, $\dot{\bar{\beta}}_t\leq e^{\bar{\alpha}_t}$) does not need to hold in general. In this sense, the results of this paper are applicable to a larger class of algorithms. With this choice of parameters, distance-generating function and loss function, the following non-autonomous Lagrangian results:
\begin{equation}
    \label{e:Lagrangian_Error1}
    \mathcal{L}(\theta,\dot{\theta},t)=\text{e}^{\int_{t_0}^t\beta\N_{\nu} d\nu}\frac{1}{\beta\N_t}\left(\frac{1}{2}\dot{\theta}^T\dot{\theta}-\gamma\beta\N_t\frac{1}{2}e_y^2\right).
\end{equation}
The Lagrangian in equation (\ref{e:Lagrangian_Error1}) is the central idea which will produce the first higher order algorithm in this paper. This Lagrangian is a function of not only the parameter and its time derivative, but is also a function of the time-varying feature $\phi$ directly through normalizing signal $\N_t$. Using the Lagrangian in (\ref{e:Lagrangian_Error1}), a functional may be defined as: $J(\theta)=\int_{\mathbb{T}}\mathcal{L}(\theta,\dot{\theta},t)dt$, where $\mathbb{T}$ is a time interval. To minimize this functional, a necessary condition from the calculus of variations is that the Lagrangian solves the Euler-Lagrange equation \cite{Luenberger_1969}: $\frac{d}{dt}\left(\frac{\partial\mathcal{L}}{\partial\dot{\theta}}(\theta,\dot{\theta},t)\right)=\frac{\partial\mathcal{L}}{\partial\theta}(\theta,\dot{\theta},t)$. The second order differential equation resulting from the application of the Euler-Lagrange equation is:
\begin{equation}
    \label{e:Accelerated_GF_2nd}
    \ddot{\theta}+\left[\beta\N_t-\frac{\dot{\N}_t}{\N_t}\right]\dot{\theta}=-\gamma\beta\N_t\phi e_y.
\end{equation}
Here $\beta$ can be seen to adjust ``friction''. Taking $\beta\rightarrow\infty$ (strong friction limit) results in the standard first order algorithm (\ref{e:update_GF}).\footnote{\label{fn:Beta} This notion will be more rigorously shown in Section \ref{s:Stability}.} The second order differential equation in (\ref{e:Accelerated_GF_2nd}) may be implemented using two first-order differential equations, similar to \cite{Evesque_2003}:
\begin{equation}
    \label{e:Accelerated_GF}
    \dot{\vartheta}=-\gamma\nabla_{\theta}L=-\gamma\phi e_y,\qquad \dot{\theta}=-\beta(\theta-\vartheta)\N_t.
\end{equation}
Equation (\ref{e:Accelerated_GF}) is the higher order algorithm that represents the first main contribution of this paper. It can be seen that the first of the two equations in (\ref{e:Accelerated_GF}) is identical to the first-order update (\ref{e:update_GF}); the second equation can be viewed as a filter, normalized by the feature-dependent $\N_t$. Alternately, the first equation can be viewed as a gradient step and the second as a mixing step. Similar in form to batch normalization \cite{Ioffe_2015} and the update in ADAM \cite{Kingma_2017}, the normalization present in this algorithm is different in that it normalizes by the time-varying feature itself as opposed to estimated moments. In equation (\ref{e:Accelerated_GF}) it can be seen that $\beta\rightarrow\infty$ decreases the nominal time constant (for a given $\phi$) of the time-varying filter, and thus in the limit $\beta\rightarrow\infty$ the first order algorithm (\ref{e:update_GF}) is recovered.\footnoteref{fn:Beta}

\subsection{Model reference adaptive control and identification}
\label{ss:Accelerated_Error2}

In a similar manner to (\ref{e:Lagrangian_Error1}) the following non-autonomous Lagrangian is defined:
\begin{equation}
    \label{e:Lagrangian_Error2}
    \mathcal{L}(\theta,\dot{\theta},t)=\text{e}^{\int_{t_0}^t\beta\N_{\nu} d\nu}\frac{1}{\beta\N_t}\left(\frac{1}{2}\dot{\theta}^T\dot{\theta}-\gamma\beta\N_t\left[\frac{d}{dt}\left\{\frac{e^TPe}{2}\right\}+\frac{e^TQe}{2}\right]\right).
\end{equation}
Comparing the Lagrangian in (\ref{e:Lagrangian_Error2}) to that in (\ref{e:Lagrangian_Error1}), it can be seen that they only differ by the term in the square brackets, representing the loss function considered. The extra terms in the square brackets account for energy storage in the error model dynamics in equation (\ref{e:error_model_2}). This may be seen as: $\left[\frac{d}{dt}\left\{\frac{e^TPe}{2}\right\}+\frac{e^TQe}{2}\right]=e^TP(\dot{e}-Ae)=e^TPb\tilde{\theta}^T\phi$, where the loss is only zero for this dynamical error model when both $e$ and $\dot{e}$ are zero. Using this Lagrangian, a functional may be defined as $J(\theta)=\int_{\mathbb{T}}\mathcal{L}(\theta,\dot{\theta},t)dt$. The minimization of this functional with the Euler-Lagrange equation and error dynamics (\ref{e:error_model_2}) results in the following second order differential equation:
\begin{equation}
    \label{e:Accelerated_MRAC_2nd}
    \ddot{\theta}+\left[\beta\N_t-\frac{\dot{\N}_t}{\N_t}\right]\dot{\theta}=-\gamma\beta\N_t\phi e^TPb.
\end{equation}
Again, $\beta$ can be seen to represent ``friction'', with $\beta\rightarrow\infty$ resulting in the first order algorithm (\ref{e:MRAC}).\footnoteref{fn:Beta} The second order differential equation in (\ref{e:Accelerated_MRAC_2nd}) may be implemented using two first-order differential equations, similar to \cite{Evesque_2003}:
\begin{equation}
    \label{e:Accelerated_MRAC}
    \dot{\vartheta}=-\gamma\phi e^TPb,\qquad \dot{\theta}=-\beta(\theta-\vartheta)\N_t.
\end{equation}
Equation (\ref{e:Accelerated_MRAC}) is the higher order algorithm that represents the second main contribution of this paper. Similar to (\ref{e:Accelerated_GF}), the first equation may be viewed as the stability based update (\ref{e:MRAC}); the second equation may be viewed as a filter, normalized by the feature-dependent $\N_t$.

\section{Stability analysis and regret bounds}
\label{s:Stability}

In this section we state the main stability and convergence results, as well as regret bounds for the higher order algorithms derived in this paper. The class $\mathcal{L}_p$ is described in Definition \ref{d:L_p} of Appendix \ref{s:Definitions}. Unless otherwise specified, $\lVert\cdot\rVert$ represents the 2-norm. Stability analysis using Lyapunov functions have been of increased use in recent years in state of the art machine learning approaches \cite{Wilson_2016,Wilson_2018}. A brief overview is given in Appendix \ref{ss:Stability_Lyapunov}.

\begin{theorem}[Time-varying regression]\label{th:Stability_Error1}
For the higher order algorithm in (\ref{e:Accelerated_GF}) applied to the time-varying regression problem in (\ref{e:error1}) the following
\begin{equation}
    V=\frac{1}{\gamma}\lVert\vartheta-\theta^*\rVert^2+\frac{1}{\gamma}\lVert\theta-\vartheta\rVert^2
\end{equation}
is a Lyapunov function with time derivative $\dot{V}\leq-\frac{2\beta}{\gamma}\lVert\theta-\vartheta\rVert^2-\lVert e_y\rVert^2-\left[\lVert e_y\rVert-2\lVert\theta-\vartheta\rVert\lVert\phi\rVert\right]^2\leq0$ and therefore $(\vartheta-\theta^*)\in\mathcal{L}_{\infty}$ and $(\theta-\vartheta)\in\mathcal{L}_{\infty}$. If in addition it assumed that $\phi,\dot{\phi} \in \mathcal{L}_{\infty}$ then  $\lim_{t\rightarrow\infty}e_y(t)=0$, $\lim_{t\rightarrow\infty}(\theta(t)-\vartheta(t))=0$, $\lim_{t\rightarrow\infty}\dot{\vartheta}(t)=0$, and $\lim_{t\rightarrow\infty}\dot{\tilde{\theta}}(t)=0$.
\end{theorem}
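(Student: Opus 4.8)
The plan is to establish the dissipation inequality by differentiating $V$ along (\ref{e:Accelerated_GF}), and then to promote it to the asymptotic conclusions by a Barbalat-type argument once the extra boundedness hypothesis on $\phi,\dot\phi$ is added.

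First I would compute $\dot V$. Differentiating and substituting $\dot\vartheta=-\gamma\phi e_y$ and $\dot\theta-\dot\vartheta=-\beta(\theta-\vartheta)\N_t+\gamma\phi e_y$ gives $\dot V=-2(\vartheta-\theta^*)^T\phi e_y-\tfrac{2\beta}{\gamma}\N_t\lVert\theta-\vartheta\rVert^2+2(\theta-\vartheta)^T\phi e_y$. The one algebraic trick is to write $\vartheta-\theta^*=(\vartheta-\theta)+\tilde\theta$ and use $e_y=\tilde\theta^T\phi$ from (\ref{e:error1}), so that $(\vartheta-\theta^*)^T\phi=-(\theta-\vartheta)^T\phi+e_y$; substituting this collapses $\dot V$ to $4(\theta-\vartheta)^T\phi e_y-2e_y^2-\tfrac{2\beta}{\gamma}\N_t\lVert\theta-\vartheta\rVert^2$. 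Then I would bound the sign-indefinite cross term by Cauchy--Schwarz, $(\theta-\vartheta)^T\phi e_y\le\lVert\theta-\vartheta\rVert\lVert\phi\rVert\lVert e_y\rVert$, and split $\tfrac{2\beta}{\gamma}\N_t\lVert\theta-\vartheta\rVert^2=\tfrac{2\beta}{\gamma}\lVert\theta-\vartheta\rVert^2+\tfrac{2\beta\mu}{\gamma}\lVert\phi\rVert^2\lVert\theta-\vartheta\rVert^2$ using $\N_t=1+\mu\phi^T\phi$. Provided the design constants make the normalization term dominate the residual $4\lVert\phi\rVert^2\lVert\theta-\vartheta\rVert^2$ (i.e.\ $\tfrac{2\beta\mu}{\gamma}\ge4$), the remaining terms complete the square, $-2e_y^2+4\lVert e_y\rVert\lVert\theta-\vartheta\rVert\lVert\phi\rVert-4\lVert\phi\rVert^2\lVert\theta-\vartheta\rVert^2=-\lVert e_y\rVert^2-\big[\lVert e_y\rVert-2\lVert\theta-\vartheta\rVert\lVert\phi\rVert\big]^2$, and one lands exactly on the claimed bound $\dot V\le-\tfrac{2\beta}{\gamma}\lVert\theta-\vartheta\rVert^2-\lVert e_y\rVert^2-[\lVert e_y\rVert-2\lVert\theta-\vartheta\rVert\lVert\phi\rVert]^2\le0$. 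This step — in particular arranging for the feature-dependent normalization $\N_t$ to absorb the indefinite cross term — is where the structure of the algorithm is essential, and I expect it to be the main obstacle; the rest is bookkeeping.

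From $\dot V\le0$, $V$ is nonincreasing, hence bounded by $V(t_0)$, and since $V$ is a sum of squared norms this gives $(\vartheta-\theta^*)\in\mathcal L_\infty$ and $(\theta-\vartheta)\in\mathcal L_\infty$, whence $\tilde\theta=(\theta-\vartheta)+(\vartheta-\theta^*)\in\mathcal L_\infty$. Integrating the dissipation inequality and using $V\ge0$ yields $\int_{t_0}^{\infty}\lVert e_y\rVert^2\,dt\le V(t_0)$ and $\int_{t_0}^{\infty}\lVert\theta-\vartheta\rVert^2\,dt<\infty$, i.e.\ $e_y\in\mathcal L_2$ and $(\theta-\vartheta)\in\mathcal L_2$.

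For the asymptotic statements I would bring in $\phi,\dot\phi\in\mathcal L_\infty$. Then $\N_t=1+\mu\phi^T\phi\in\mathcal L_\infty$, so $e_y=\tilde\theta^T\phi\in\mathcal L_\infty$, $\dot\theta=-\beta(\theta-\vartheta)\N_t\in\mathcal L_\infty$, and $\dot\vartheta=-\gamma\phi e_y\in\mathcal L_\infty$; consequently $\dot e_y=\dot\theta^T\phi+\tilde\theta^T\dot\phi\in\mathcal L_\infty$ and $\tfrac{d}{dt}(\theta-\vartheta)=\dot\theta-\dot\vartheta\in\mathcal L_\infty$. Since $e_y$ and $\theta-\vartheta$ lie in $\mathcal L_2\cap\mathcal L_\infty$ with bounded derivatives, Barbalat's lemma gives $e_y\to0$ and $\theta-\vartheta\to0$ as $t\to\infty$. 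Finally $\dot\vartheta=-\gamma\phi e_y\to0$ because $\phi$ is bounded and $e_y\to0$, and $\dot{\tilde\theta}=\dot\theta=-\beta(\theta-\vartheta)\N_t\to0$ because $\N_t$ is bounded and $\theta-\vartheta\to0$, which closes the argument.
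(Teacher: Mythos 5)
Your proposal is correct and follows essentially the same route as the paper: the same Lyapunov function, the same expansion of $\dot V$ with the substitution $(\vartheta-\theta^*)^T\phi=-(\theta-\vartheta)^T\phi+e_y$, the normalization condition $\tfrac{2\beta\mu}{\gamma}\geq 4$ (the paper fixes $\mu=2\gamma/\beta$, which is exactly this with equality), the same completion of the square, and a Barbalat-type conclusion under $\phi,\dot\phi\in\mathcal L_\infty$. The only cosmetic difference is that you obtain $\dot\vartheta\to0$ and $\dot{\tilde\theta}\to0$ directly from $e_y\to0$, $\theta-\vartheta\to0$ and boundedness of $\phi,\N_t$, whereas the paper applies the Barbalat corollary to $\dot\vartheta,\dot{\tilde\theta}$ themselves via $\ddot\vartheta,\ddot{\tilde\theta}\in\mathcal L_\infty$; both are valid.
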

\begin{corollary}\label{c:Stability_Error1}
The higher order algorithm in \eqref{e:Accelerated_GF} applied to the time-varying regression problem in \eqref{e:error1} has uniformly bounded regret: $\text{Regret}_\mathrm{continuous}:= \int_0^T \lVert e_y(\tau)\rVert^2d\tau = \mathcal{O}(1)$.
\end{corollary}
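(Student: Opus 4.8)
The plan is to obtain the regret bound as an immediate consequence of the Lyapunov inequality already established in Theorem~\ref{th:Stability_Error1}, so the argument is essentially one integration. First I would discard the two manifestly nonpositive contributions in the bound on $\dot V$, namely $-\frac{2\beta}{\gamma}\lVert\theta-\vartheta\rVert^2$ and $-\bigl[\lVert e_y\rVert-2\lVert\theta-\vartheta\rVert\lVert\phi\rVert\bigr]^2$, to retain the weaker but cleaner estimate $\dot V(\tau)\leq-\lVert e_y(\tau)\rVert^2$ valid for all $\tau\geq0$ along trajectories of \eqref{e:Accelerated_GF}.

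Next I would integrate this differential inequality over $[0,T]$. Since $\theta$ and $\vartheta$ solve the first-order system \eqref{e:Accelerated_GF}, they are $C^1$ in $t$, hence $V$ is continuously differentiable and the fundamental theorem of calculus applies, giving $V(T)-V(0)=\int_0^T\dot V(\tau)\,d\tau\leq-\int_0^T\lVert e_y(\tau)\rVert^2\,d\tau$. Rearranging yields $\int_0^T\lVert e_y(\tau)\rVert^2\,d\tau\leq V(0)-V(T)$. Because $V=\frac{1}{\gamma}\lVert\vartheta-\theta^*\rVert^2+\frac{1}{\gamma}\lVert\theta-\vartheta\rVert^2$ is a sum of squared norms, it is nonnegative, so $V(T)\geq0$ and therefore
\begin{equation*}
\int_0^T\lVert e_y(\tau)\rVert^2\,d\tau\;\leq\;V(0)\;=\;\frac{1}{\gamma}\lVert\vartheta(0)-\theta^*\rVert^2+\frac{1}{\gamma}\lVert\theta(0)-\vartheta(0)\rVert^2,
\end{equation*}
a finite constant determined entirely by the initial conditions and independent of the horizon $T$. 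This is exactly $\mathrm{Regret}_{\mathrm{continuous}}=\mathcal{O}(1)$, completing the proof.

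There is no real obstacle here; the content is entirely front-loaded into Theorem~\ref{th:Stability_Error1}, and the only point requiring a word of care is the regularity needed to integrate $\dot V$, which is supplied by the fact that the right-hand sides of \eqref{e:Accelerated_GF} are (locally) bounded so that $V$ is absolutely continuous. I would also remark that the sharper retained terms show in passing that $(\theta-\vartheta)\in\mathcal{L}_2$ with an analogous constant bound, but this is not needed for the stated corollary.
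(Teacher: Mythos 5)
Your proposal is correct and matches the paper's own argument in Appendix \ref{ss:Regret}: the paper likewise retains only $\dot V(t)\leq-\lVert e_y(t)\rVert^2$, integrates from the initial time to $T$, and bounds the result by $V(t_0)=\mathcal{O}(1)$ using the nonnegativity of $V$. Your remark on the regularity of $V$ and the incidental $\mathcal{L}_2$ bound on $(\theta-\vartheta)$ are fine but not needed; nothing is missing.
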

\begin{theorem}[Model reference adaptive control]\label{th:Stability_Error2}
For the higher order algorithm in (\ref{e:Accelerated_MRAC}) applied to the model reference adaptive control problem in (\ref{e:error_model_2}) the following
\begin{equation}
    V=\frac{1}{\gamma}\lVert\vartheta-\theta^*\rVert^2+\frac{1}{\gamma}\lVert\theta-\vartheta\rVert^2+e^TPe
\end{equation}
is a Lyapunov function with time derivative $\dot{V}\leq-\frac{2\beta}{\gamma}\lVert\theta-\vartheta\rVert^2-\lVert e\rVert^2-\left[\lVert e\rVert-2\lVert Pb\rVert\lVert\theta-\vartheta\rVert\lVert\phi\rVert\right]^2\leq0$ and therefore $e\in\mathcal{L}_{\infty}$, $(\vartheta-\theta^*)\in\mathcal{L}_{\infty}$, and $(\theta-\vartheta)\in\mathcal{L}_{\infty}$. If in addition it assumed that $\phi\in \mathcal{L}_{\infty}$ then  $\lim_{t\rightarrow\infty}e(t)=0$. Also if $\dot{\phi}\in\mathcal{L}_{\infty}$, then $\lim_{t\rightarrow\infty}(\theta(t)-\vartheta(t))=0$, $\lim_{t\rightarrow\infty}\dot{\vartheta}(t)=0$, and $\lim_{t\rightarrow\infty}\dot{\tilde{\theta}}(t)=0$.
\end{theorem}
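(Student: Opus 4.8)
The plan is to verify the claimed bound on $\dot{V}$ by direct differentiation along the closed-loop trajectories \eqref{e:error_model_2}, \eqref{e:Accelerated_MRAC}, and then read off the stability and convergence statements by the standard Lyapunov-plus-Barbalat bookkeeping, paralleling the argument for Theorem~\ref{th:Stability_Error1} but carrying the additional $e^{T}Pe$ term.

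First I would differentiate the three pieces of $V$ in turn. The pieces $\tfrac{1}{\gamma}\lVert\vartheta-\theta^{*}\rVert^{2}$ and $\tfrac{1}{\gamma}\lVert\theta-\vartheta\rVert^{2}$ are treated exactly as for Theorem~\ref{th:Stability_Error1}, substituting $\dot{\vartheta}=-\gamma\phi e^{T}Pb$ and $\dot{\theta}=-\beta(\theta-\vartheta)\N_t$; the second one yields the friction term $-\tfrac{2\beta\N_t}{\gamma}\lVert\theta-\vartheta\rVert^{2}$ together with a cross term $2(\theta-\vartheta)^{T}\phi\,(e^{T}Pb)$. For the third piece I would use the Lyapunov equation $A^{T}P+PA=-Q$ and \eqref{e:error_model_2} to obtain $\tfrac{d}{dt}(e^{T}Pe)=-e^{T}Qe+2(e^{T}Pb)(\tilde{\theta}^{T}\phi)$. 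The crucial algebraic step is then to split $\tilde{\theta}=(\theta-\vartheta)+(\vartheta-\theta^{*})$: the two cross terms carrying $(\vartheta-\theta^{*})^{T}\phi$ — one from $\tfrac{d}{dt}\tfrac{1}{\gamma}\lVert\vartheta-\theta^{*}\rVert^{2}$, one from $\tfrac{d}{dt}(e^{T}Pe)$ — cancel identically (this cancellation is exactly the mechanism that makes the high-order tuner work), leaving $\dot{V}=4(\theta-\vartheta)^{T}\phi\,(e^{T}Pb)-\tfrac{2\beta\N_t}{\gamma}\lVert\theta-\vartheta\rVert^{2}-e^{T}Qe$. Bounding the surviving cross term by Cauchy--Schwarz (twice) as $4(\theta-\vartheta)^{T}\phi\,(e^{T}Pb)\le 4\lVert Pb\rVert\,\lVert\theta-\vartheta\rVert\,\lVert\phi\rVert\,\lVert e\rVert$, using $-e^{T}Qe$ to furnish a $-\lVert e\rVert^{2}$ term, and inserting $\N_t=1+\mu\lVert\phi\rVert^{2}$ so that the $\tfrac{2\beta\mu}{\gamma}\lVert\phi\rVert^{2}\lVert\theta-\vartheta\rVert^{2}$ part of the friction term dominates the $\lVert\phi\rVert^{2}$-growth of the cross term, I would complete the square to reach $\dot{V}\le-\tfrac{2\beta}{\gamma}\lVert\theta-\vartheta\rVert^{2}-\lVert e\rVert^{2}-\bigl[\lVert e\rVert-2\lVert Pb\rVert\,\lVert\theta-\vartheta\rVert\,\lVert\phi\rVert\bigr]^{2}\le 0$, i.e. the claimed inequality.

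From $\dot{V}\le 0$, $V$ is nonincreasing and bounded by $V(0)$, which gives $e,\ (\vartheta-\theta^{*}),\ (\theta-\vartheta)\in\mathcal{L}_{\infty}$ and hence $\tilde{\theta}\in\mathcal{L}_{\infty}$. Integrating the inequality yields $\int_{0}^{\infty}\lVert e\rVert^{2}d\tau\le V(0)$ and $\int_{0}^{\infty}\lVert\theta-\vartheta\rVert^{2}d\tau\le\tfrac{\gamma}{2\beta}V(0)$, so $e$ and $(\theta-\vartheta)$ also lie in $\mathcal{L}_{2}$. Under $\phi\in\mathcal{L}_{\infty}$, \eqref{e:error_model_2} gives $\dot{e}\in\mathcal{L}_{\infty}$, so $e$ is uniformly continuous and Barbalat's lemma gives $e(t)\to 0$. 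If in addition $\dot{\phi}\in\mathcal{L}_{\infty}$, then $\N_t$ and $\dot{\N}_t=2\mu\phi^{T}\dot{\phi}$ are bounded, so from \eqref{e:Accelerated_MRAC_2nd} $\ddot{\theta}\in\mathcal{L}_{\infty}$; since $(\theta-\vartheta)\in\mathcal{L}_{2}\cap\mathcal{L}_{\infty}$ with bounded derivative $\dot{\theta}-\dot{\vartheta}=-\beta(\theta-\vartheta)\N_t+\gamma\phi e^{T}Pb$, Barbalat's lemma gives $(\theta(t)-\vartheta(t))\to 0$; then $\dot{\tilde{\theta}}(t)=\dot{\theta}(t)=-\beta(\theta-\vartheta)\N_t\to 0$ and $\dot{\vartheta}(t)=-\gamma\phi e^{T}Pb\to 0$ (the latter using $e\to 0$ and $\phi\in\mathcal{L}_{\infty}$).

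I expect the main obstacle to be the $\dot{V}$ computation itself, and within it the completion of the square: one must check that the normalizing signal $\N_t$ — i.e. the size of $\mu$ relative to $\gamma/\beta$ and $\lVert Pb\rVert^{2}$, together with a suitably scaled $Q$ — is chosen so that the $\lVert\phi\rVert^{2}$-dependent part of the friction term genuinely dominates the $\lVert\phi\rVert^{2}$-dependent part of the cross term. This is precisely the point where the normalization earns its keep, and the reason the bound survives when $\phi$ is merely time-varying rather than a priori bounded. A secondary point requiring care is the Barbalat bookkeeping for the last three limits: each quantity sent to zero must be shown uniformly continuous via a bounded derivative, and this is exactly where the extra hypothesis $\dot{\phi}\in\mathcal{L}_{\infty}$ is consumed, through $\dot{\N}_t$ and hence $\ddot{\theta}$.
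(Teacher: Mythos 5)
Your proposal is correct and follows essentially the same route as the paper's proof: the same cancellation of the $(\vartheta-\theta^*)^T\phi\,(e^TPb)$ cross terms coming from the tuner structure, the same use of the normalization $\N_t$ to absorb the $\lVert\phi\rVert^2$ growth via completion of squares (the paper fixes the specific choices $\mu=2\gamma\lVert Pb\rVert^2/\beta$ and $Q=2I$, without loss of generality, to obtain exactly the stated constants), and the same integration-plus-Barbalat bookkeeping. The only cosmetic difference is in the final limits, where you apply Barbalat to $\theta-\vartheta$ and then obtain $\dot{\tilde{\theta}}\to0$ and $\dot{\vartheta}\to0$ algebraically (the latter needing only $e\to0$ and $\phi\in\mathcal{L}_\infty$), whereas the paper applies the Barbalat corollary to $\dot{\vartheta}$ and $\dot{\tilde{\theta}}$ directly via $\ddot{\vartheta},\ddot{\tilde{\theta}}\in\mathcal{L}_\infty$.
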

\begin{corollary}\label{c:Stability_Error2}
The higher order algorithm in (\ref{e:Accelerated_MRAC}) applied to the adaptive control problem in (\ref{e:error_model_2}) has uniformly bounded regret: $\text{Regret}_\mathrm{continuous}:= \int_0^T \lVert e(\tau)\rVert^2d\tau = \mathcal{O}(1)$.
\end{corollary}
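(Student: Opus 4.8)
The plan is to read the regret bound straight off the Lyapunov inequality established in Theorem~\ref{th:Stability_Error2}, exactly as Corollary~\ref{c:Stability_Error1} follows from Theorem~\ref{th:Stability_Error1}. Recall that for $V=\frac{1}{\gamma}\lVert\vartheta-\theta^*\rVert^2+\frac{1}{\gamma}\lVert\theta-\vartheta\rVert^2+e^TPe$ that theorem gives
\[
\dot V \le -\frac{2\beta}{\gamma}\lVert\theta-\vartheta\rVert^2-\lVert e\rVert^2-\bigl[\lVert e\rVert-2\lVert Pb\rVert\,\lVert\theta-\vartheta\rVert\,\lVert\phi\rVert\bigr]^2 .
\]
The first step is to discard the two manifestly nonpositive terms $-\tfrac{2\beta}{\gamma}\lVert\theta-\vartheta\rVert^2$ and $-[\,\cdots\,]^2$, retaining only the weaker but sufficient bound $\dot V(\tau)\le -\lVert e(\tau)\rVert^2$ valid for all $\tau\ge 0$.

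Next I would integrate this inequality over $[0,T]$, obtaining $V(T)-V(0)\le -\int_0^T\lVert e(\tau)\rVert^2\,d\tau$, i.e.
\[
\int_0^T\lVert e(\tau)\rVert^2\,d\tau \;\le\; V(0)-V(T).
\]
Since $P=P^T$ is positive definite, $e^TPe\ge 0$, and the remaining two terms of $V$ are squared norms, so $V(T)\ge 0$ for every $T$; dropping $-V(T)$ yields $\int_0^T\lVert e(\tau)\rVert^2\,d\tau\le V(0)$.

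Finally I would note that $V(0)=\frac{1}{\gamma}\lVert\vartheta(0)-\theta^*\rVert^2+\frac{1}{\gamma}\lVert\theta(0)-\vartheta(0)\rVert^2+e(0)^TPe(0)$ is a fixed finite constant, determined only by the initial conditions and the fixed problem data ($\gamma$, $P$, $\theta^*$), and in particular is independent of the horizon $T$. Hence $\mathrm{Regret}_{\mathrm{continuous}}=\int_0^T\lVert e(\tau)\rVert^2\,d\tau=\mathcal{O}(1)$, which is the claim. I do not expect any genuine obstacle here: the only point needing a moment's care is the sign-definiteness used to drop $-V(T)$, which is supplied by positive definiteness of $P$ (and nonnegativity of the norm terms); everything else is an immediate consequence of Theorem~\ref{th:Stability_Error2}.
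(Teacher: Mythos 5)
Your proposal is correct and follows essentially the same route as the paper: drop the extra nonpositive terms to get $\dot V\leq-\lVert e\rVert^2$, integrate over the horizon, and bound the result by $V(0)$, a constant independent of $T$. Your justification for discarding $-V(T)$ (nonnegativity of $V$ via $P\succ0$ and the squared-norm terms) is in fact slightly cleaner than the paper's wording, which appeals to $\dot V\leq0$ at that step.
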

For a proofs of Theorems \ref{th:Stability_Error1} and \ref{th:Stability_Error2} see Appendices \ref{ss:Stability_Error1} and \ref{ss:Stability_Error2} respectively. Corollaries \ref{c:Stability_Error1} and \ref{c:Stability_Error2} follow from $\dot{V}(t)\leq-\lVert e_y(t) \rVert^2$, $\dot{V}(t)\leq-\lVert e(t) \rVert^2$ and $V$ is bounded, as shown in Appendix \ref{ss:Regret}.

\section{Comparison of approaches}
\label{s:Approach_Comparison}

Table \ref{t:Comparison_Alg} shows a comparison of the Lagrangian functional and the resulting second order ODE from a given parameterization of the algorithm proposed by \cite{Wibisono_2016} to the results provided in this paper for regression. This parameterization was chosen to coincide with the notions used in this paper (squared loss and the squared Euclidean norm for the error in (\ref{e:error1})) along with parameters chosen as in Equation 12 of \cite{Wibisono_2016}. It can be seen that both Lagrangians have an increasing function multiplying the kinetic and potential energies with an additional time-varying term weighting the potential energy. Our approach however is a function of the feature $\phi$ as compared to an explicit function of time. This more natural parameterization results in an algorithm shown for comparison purposes in Table \ref{t:Comparison_Alg} that does not have a damping term that decays to zero with time. Therefore our algorithm does not change from an overdamped to underdamped system as time progresses as is commonly seen in higher order accelerated methods \cite{Su_2016}. Thus our approach is capable of running continuously as features are processed. No restart is required as is often used in accelerated algorithms in machine learning \cite{O_Donoghue_2013}. The more natural damping term shown in our higher order ODE is an explicit function of both the feature and time derivative of the feature vector. It can be noted once more that the time derivative of the feature does not need to be known as this ODE may be implemented as the higher order algorithm in (\ref{e:Accelerated_GF}), which allows for online processing of the features, without a priori knowledge of its future variation. Therefore the higher order algorithms derived in this paper can be used in real-time sequential decision making systems, where features and output errors are processed online.
\begin{table}[t]
\caption{Comparison of approaches for regression}
\label{t:Comparison_Alg}
\centering
\begin{tabular}{cc} 
\toprule
Parameterization from \cite{Wibisono_2016} & Our Approach \\ 
\midrule
$\mathcal{L}(\theta,\dot{\theta},t)=\frac{t^{p+1}}{p}\left(\frac{1}{2}\dot{\theta}^T\dot{\theta}-Cp^2t^{p-2}\frac{1}{2}e_y^2\right)$ & $\mathcal{L}(\theta,\dot{\theta},t)=\text{e}^{\int_{t_0}^t\beta\N_{\nu} d\nu}\frac{1}{\beta\N_t}\left(\frac{1}{2}\dot{\theta}^T\dot{\theta}-\gamma\beta\N_t\frac{1}{2}e_y^2\right)$\\
$\ddot{\theta}+\frac{p+1}{t}\dot{\theta}=-Cp^2t^{p-2}\phi e_y$ & $\ddot{\theta}+\left[\beta\N_t-\frac{\dot{\N}_t}{\N_t}\right]\dot{\theta}=-\gamma\beta\N_t\phi e_y$\\
\bottomrule
\end{tabular}
\end{table}
\begin{table}[t]
\caption{Comparison of candidate Lyapunov functions for the higher order regression algorithm}
\label{t:Comparison_Lyap}
\centering
\begin{tabular}{cc} 
\toprule
Lyapunov Function in \cite{Wibisono_2016} & Our Approach \\ 
\midrule
$V=\frac{1}{2}\lVert\tilde{\theta}+\frac{1}{\beta\N_t}\dot{\theta}\rVert^2+\frac{\gamma}{\beta\N_t}\frac{1}{2}e_y^2$ & $V=\frac{1}{\gamma}\lVert\vartheta-\theta^*\rVert^2+\frac{1}{\gamma}\lVert\theta-\vartheta\rVert^2$ \\ 
$\dot{V}=-\gamma e_y^2\left(1+\frac{\mu\phi^T{\color{red}\dot{\phi}}}{\beta\N_t^2}\right)+\frac{\gamma}{\beta\N_t}e_y\tilde{\theta}^T{\color{red}\dot{\phi}}$ & \hspace{-0.068cm}$\dot{V}\leq-\frac{2\beta}{\gamma}\lVert\theta-\vartheta\rVert^2-\lVert e_y\rVert^2-\left[\lVert e_y\rVert-2\lVert\theta-\vartheta\rVert\lVert\phi\rVert\right]^2$ \\
\bottomrule
\end{tabular}
\end{table}

Normalization by the magnitude of the time-varying feature (\ref{e:N_t}) can be seen to be explicitly included in our algorithm. This normalization is in fact necessary in order to provide a proof of stability as was found by \cite{Evesque_2003}, due to the required time-varying feature dependent scaling. Table \ref{t:Comparison_Lyap} shows the candidate Lyapunov function proposed by \cite{Wibisono_2016} applied to our algorithm as derived in Appendix \ref{ss:Stability_Wibisono}, and the Lyapunov function considered in this paper. It can be seen that the candidate Lyapunov function proposed by \cite{Wibisono_2016} represents a scaled kinetic plus potential energy, and results in a time derivative that cannot be guaranteed to be non-increasing for arbitrary initial conditions and time variations of the feature. Our Lyapunov function is fundamentally different in its construction and is indeed able to verify stability. It should be noted that the class of algorithms in \cite{Wibisono_2016} was not designed for time-varying features and that the comparisons are due to its general form in continuous time, representing a large class of higher order learning algorithms commonly used in machine learning, including Nesterov acceleration \cite{Nesterov_1983}. It can also be noted that the higher order algorithms proposed in this paper are \emph{proven stable regardless of the initial condition} of the system (see Section \ref{s:Stability}). That is to say that an optimization problem-specific schedule on the parameters of the problem is not required to set in order to cope with the initial conditions of the algorithm, as is usually required for momentum methods commonly used in machine learning \cite{Sutskever_2013}. Our regret bounds do not increase as a function of time as is common in online machine learning approaches \cite{Zinkevich_2003,Hazan_2007,Hazan_2008,Shalev_Shwartz_2011,Hazan_2016}. Thus $\mathcal{O}(1)$, constant regret attained by our algorithms is the best achievable regret, up to constants which do not vary with time. The provably correct algorithms proposed in this paper are proven to be stable, with $\mathcal{O}(1)$ regret bounds, and provide for a unified framework using a variational perspective for convergence in output (\ref{e:Accelerated_GF}) (respectively model tracking (\ref{e:Accelerated_MRAC})) error for \emph{time-varying features with arbitrary initial conditions} where the relation between feature and error may be algebraic (\ref{e:error1}) or dynamical (\ref{e:error_model_2}).

\section{Numerical experiments}
\label{s:Experiments}

We conducted numerical experiments for the time-varying regression and state feedback adaptive control problems. The implementation was carried out in Matlab and Simulink, in order to efficiently simulate continuous dynamical systems (code provided). The hyperparameters for the higher order algorithms were chosen using nominal values (e.g. $\gamma=0.1$, $\beta=1$, and $\mu$ selected as in the proof of stability in Appendices \ref{ss:Stability_Error1} and \ref{ss:Stability_Error2}) as opposed to optimized for performance, to demonstrate the efficacy of the algorithms without the need for significant hyperparameter tuning.

\subsection{Time-varying regression}
\label{ss:TV_Regression_Experiments}

A time-varying regression system was simulated with the error model as in (\ref{e:error1}). The standard gradient flow algorithm (\ref{e:update_GF}) and higher order algorithm (\ref{e:Accelerated_GF}) are compared in each simulation alongside a continuous parameterization of Nesterov's accelerated method as shown in Table \ref{t:Comparison_Alg}. A three dimensional problem was considered for the sake of clarity of presentation. For the accelerated method in \cite{Wibisono_2016}, the hyperparameters were set to correspond to Nesterov acceleration and to have the same constant multiplying the gradient term, $\phi e_y$, as: $p=2$, $C=\gamma\beta/p^2$.

\begin{figure}[t!]
    \centering
    \begin{subfigure}[b]{\textwidth}
        \centerline{
	    \includegraphics[trim={0.25cm 3.5cm 1.5cm 4.4cm},clip,width=0.25\textwidth]{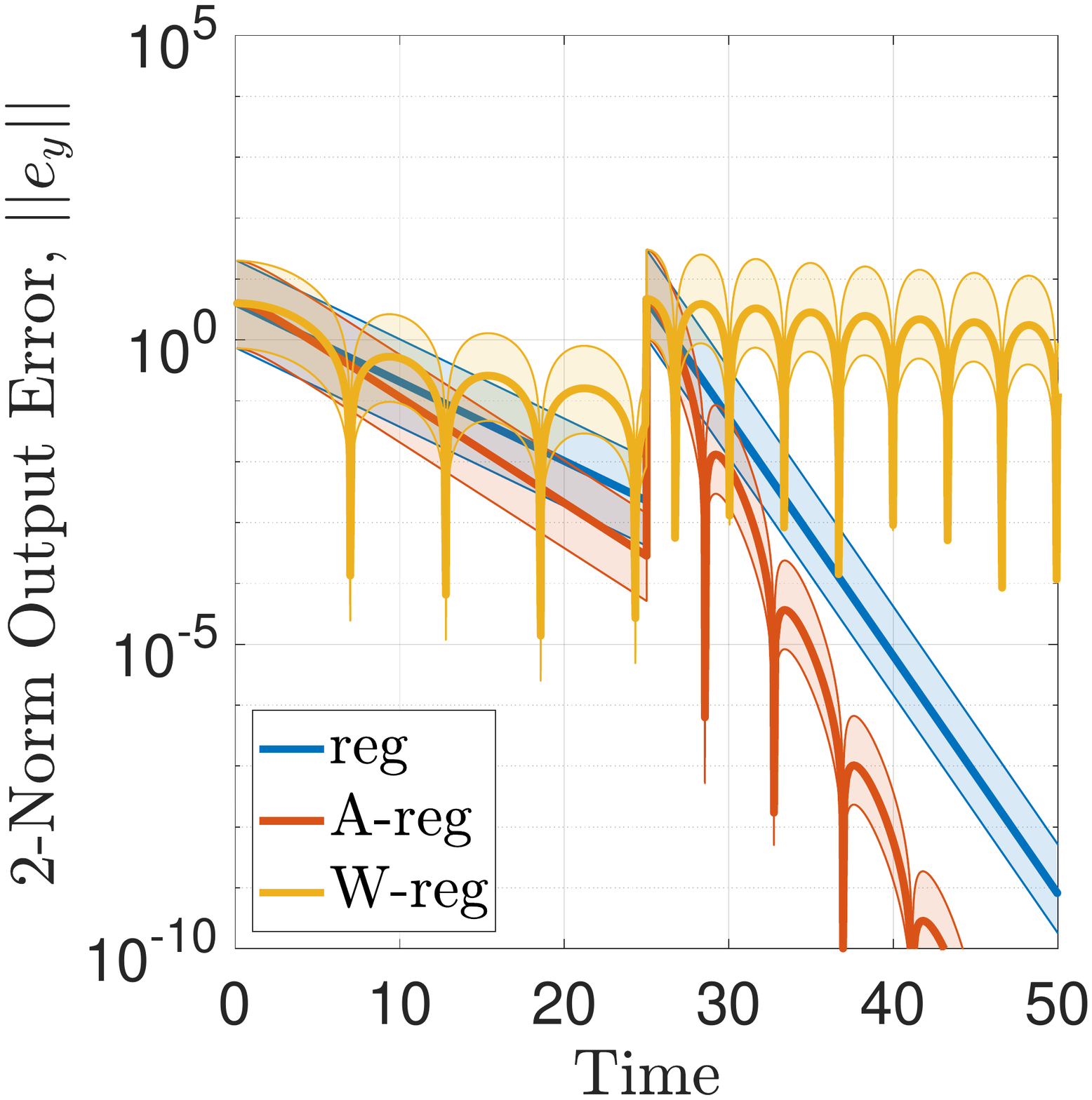}
	    \includegraphics[trim={0.25cm 3.5cm 1.5cm 4.4cm},clip,width=0.25\textwidth]{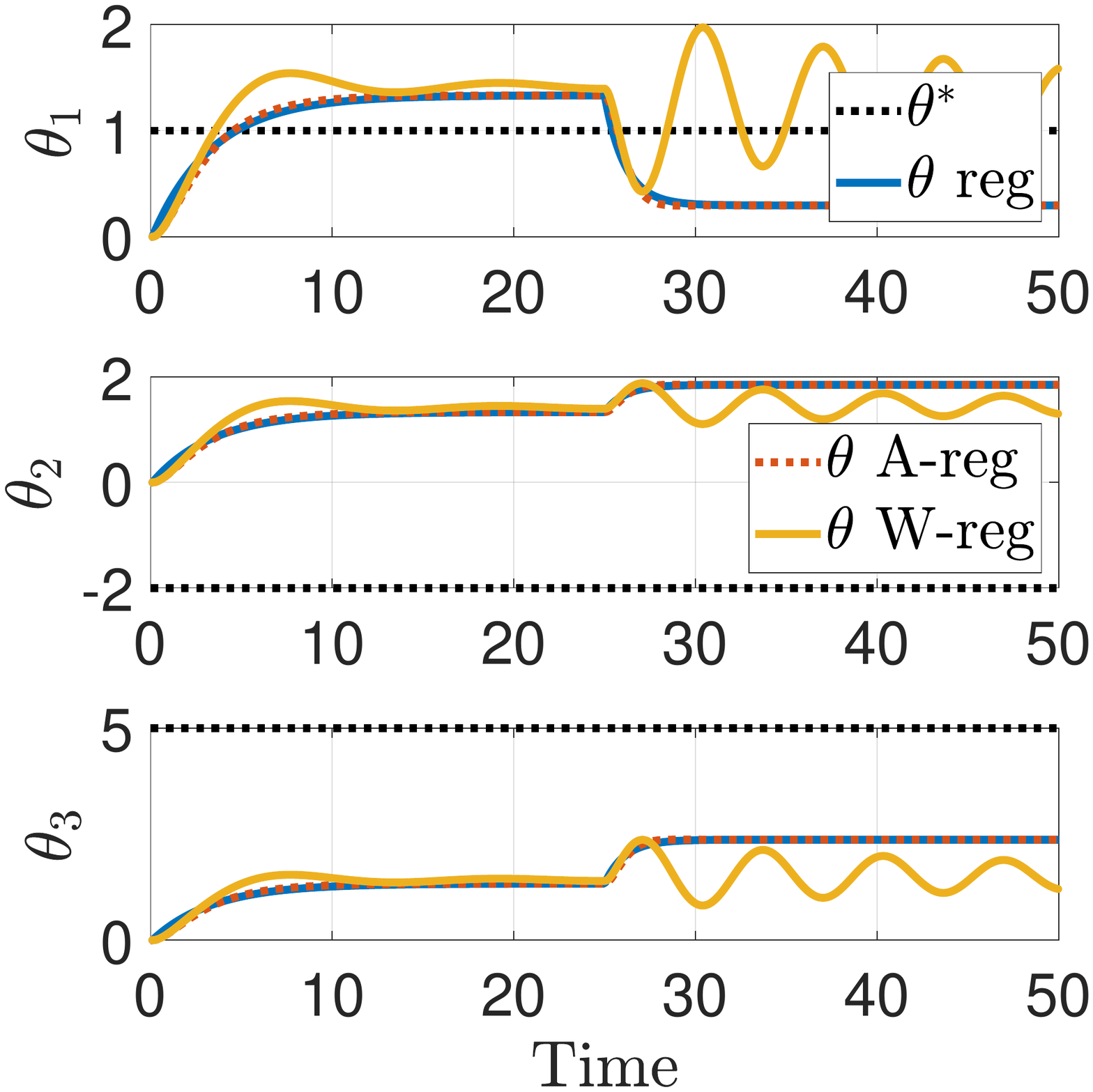}
	    \includegraphics[trim={0.25cm 3.5cm 1.5cm 4.4cm},clip,width=0.25\textwidth]{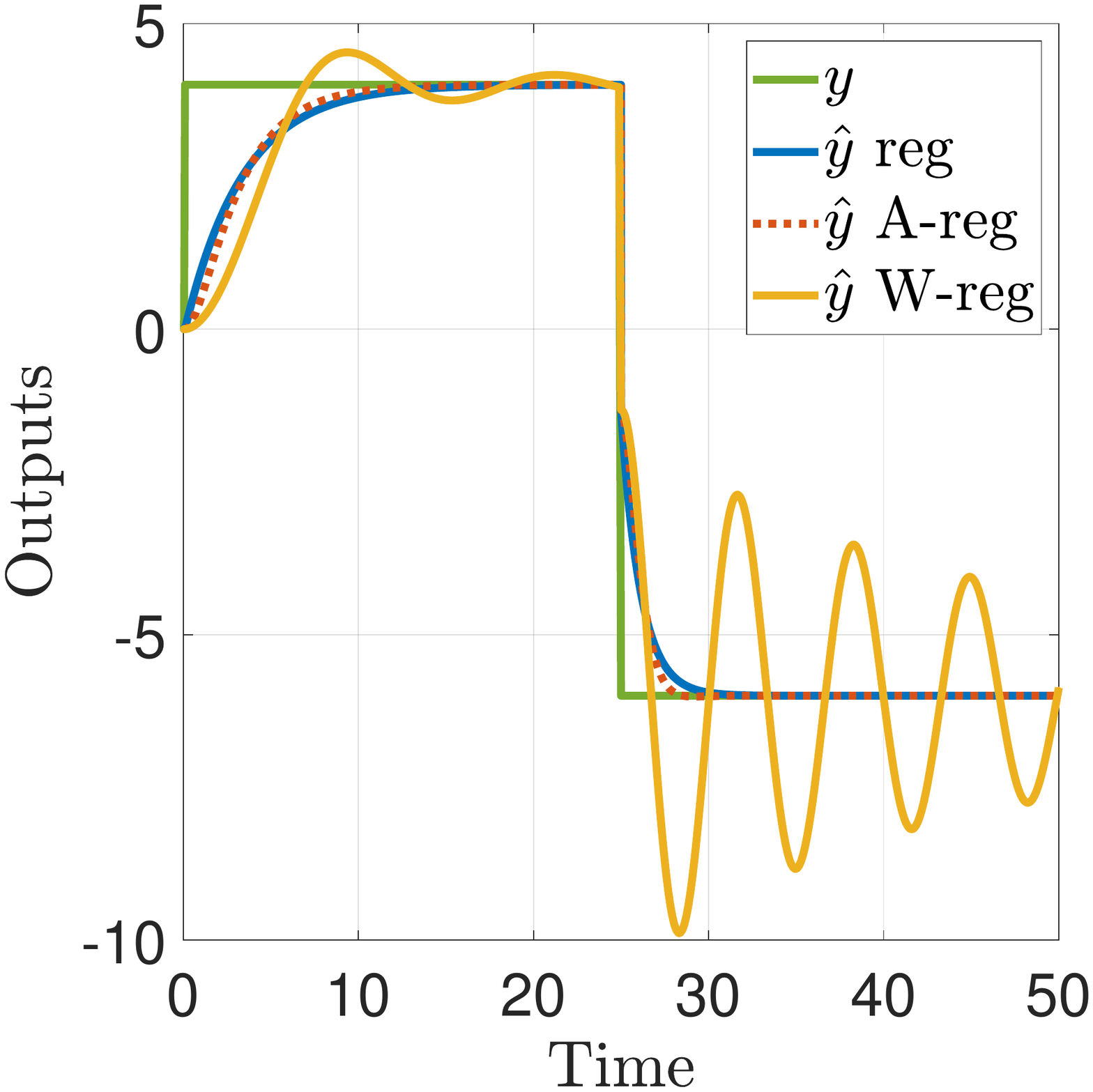}
	    \includegraphics[trim={0.25cm 3.5cm 1.5cm 4.4cm},clip,width=0.25\textwidth]{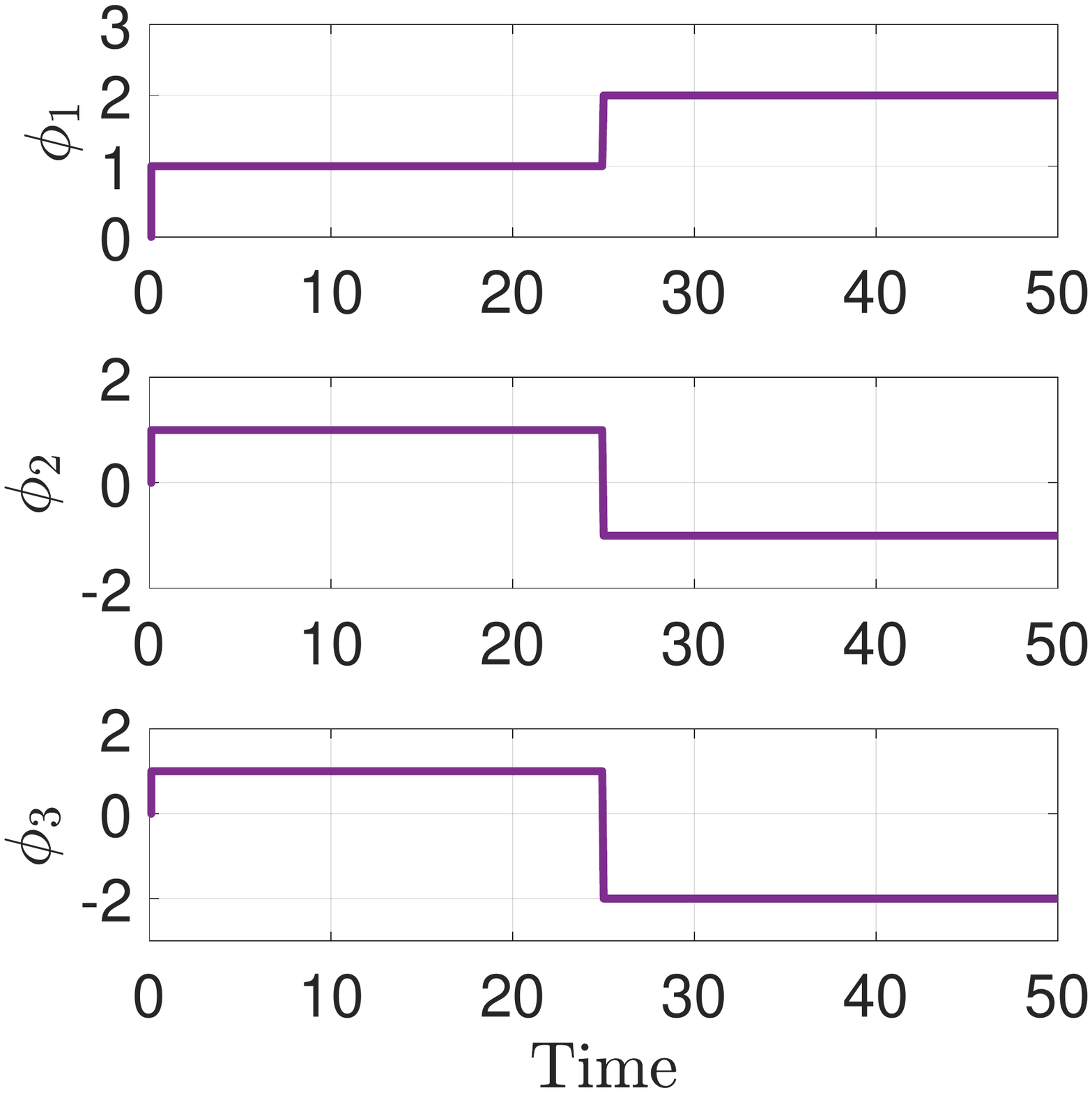}
	    }
        \caption{Time-varying regression: $\theta^*=[1,-2,~5]^T+Z$, where $Z\sim\text{Unif}\left([-10,10]^3\right)$. At time $t=0.1$, the feature vector steps to a constant value of $\phi=[1,~1,~1]^T$, at time $t=25$, the feature vector steps to $\phi=[2,-1,-2]^T$.}
        \label{f:Error1_2step_Response}
    \end{subfigure}
    
    \begin{subfigure}[b]{\textwidth}
        \centerline{
	    \includegraphics[trim={0.25cm 3.5cm 1.5cm 4.4cm},clip,width=0.25\textwidth]{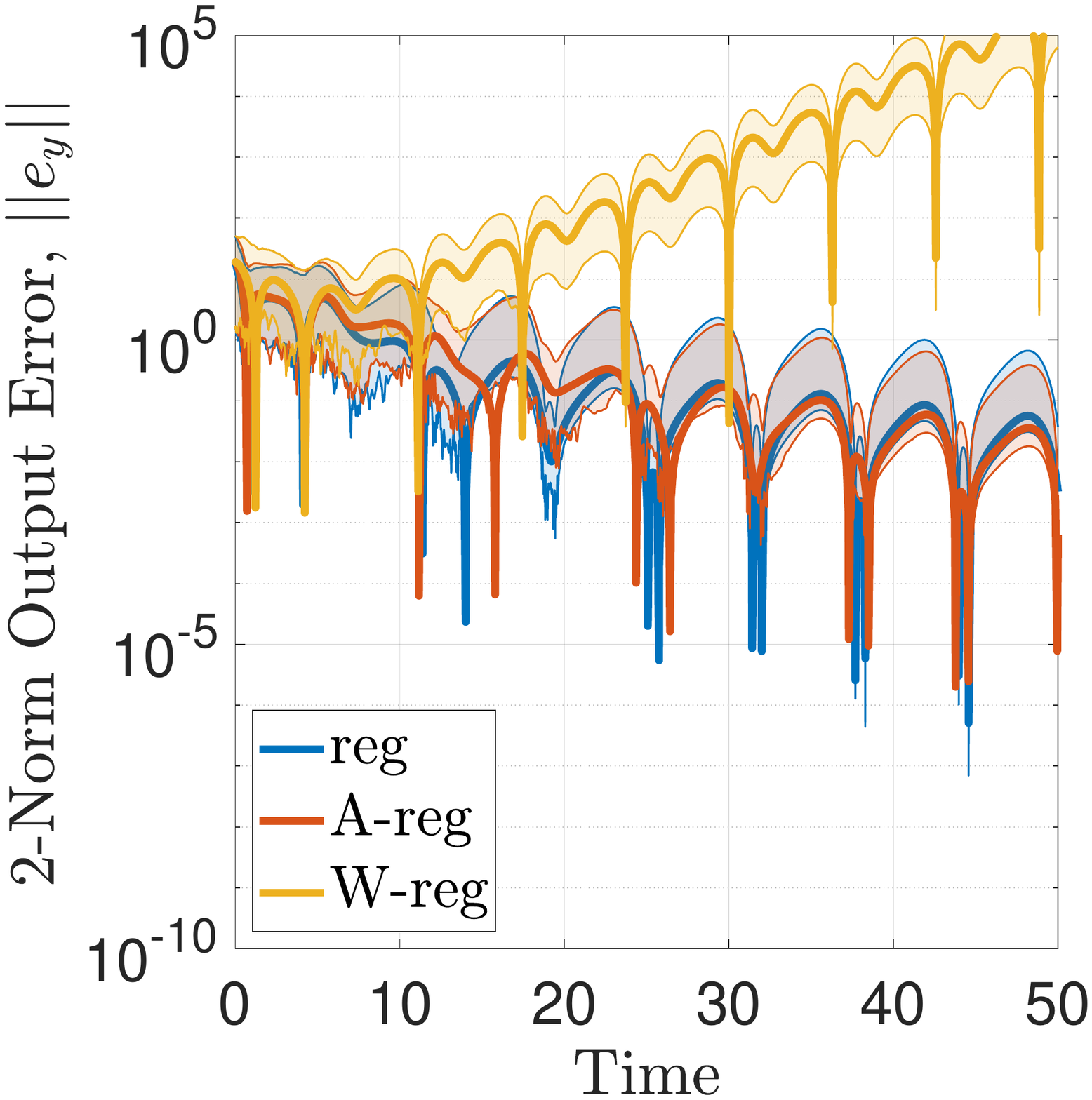}
	    \includegraphics[trim={0.25cm 3.5cm 1.5cm 4.4cm},clip,width=0.25\textwidth]{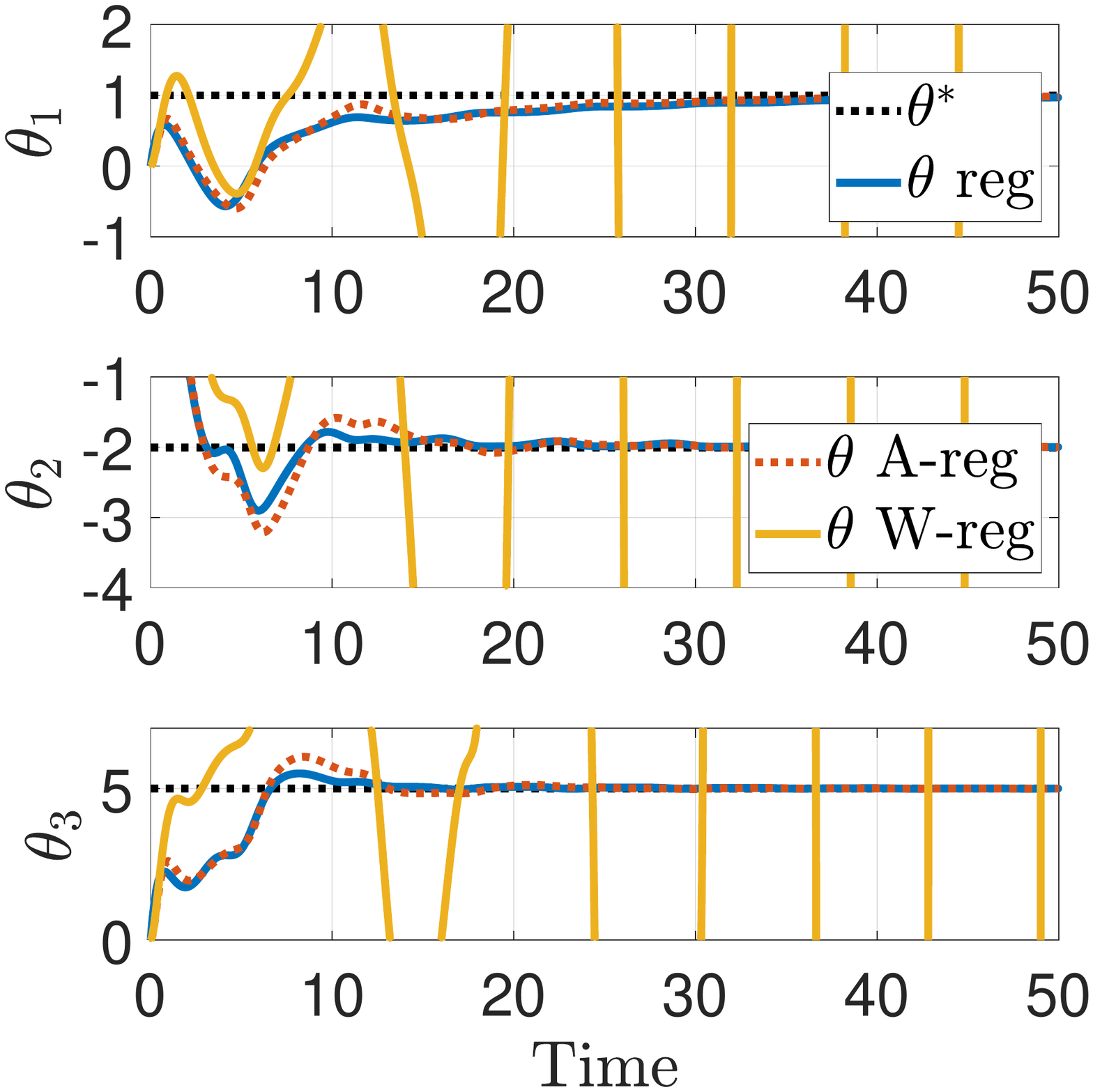}
	    \includegraphics[trim={0.25cm 3.5cm 1.5cm 4.4cm},clip,width=0.25\textwidth]{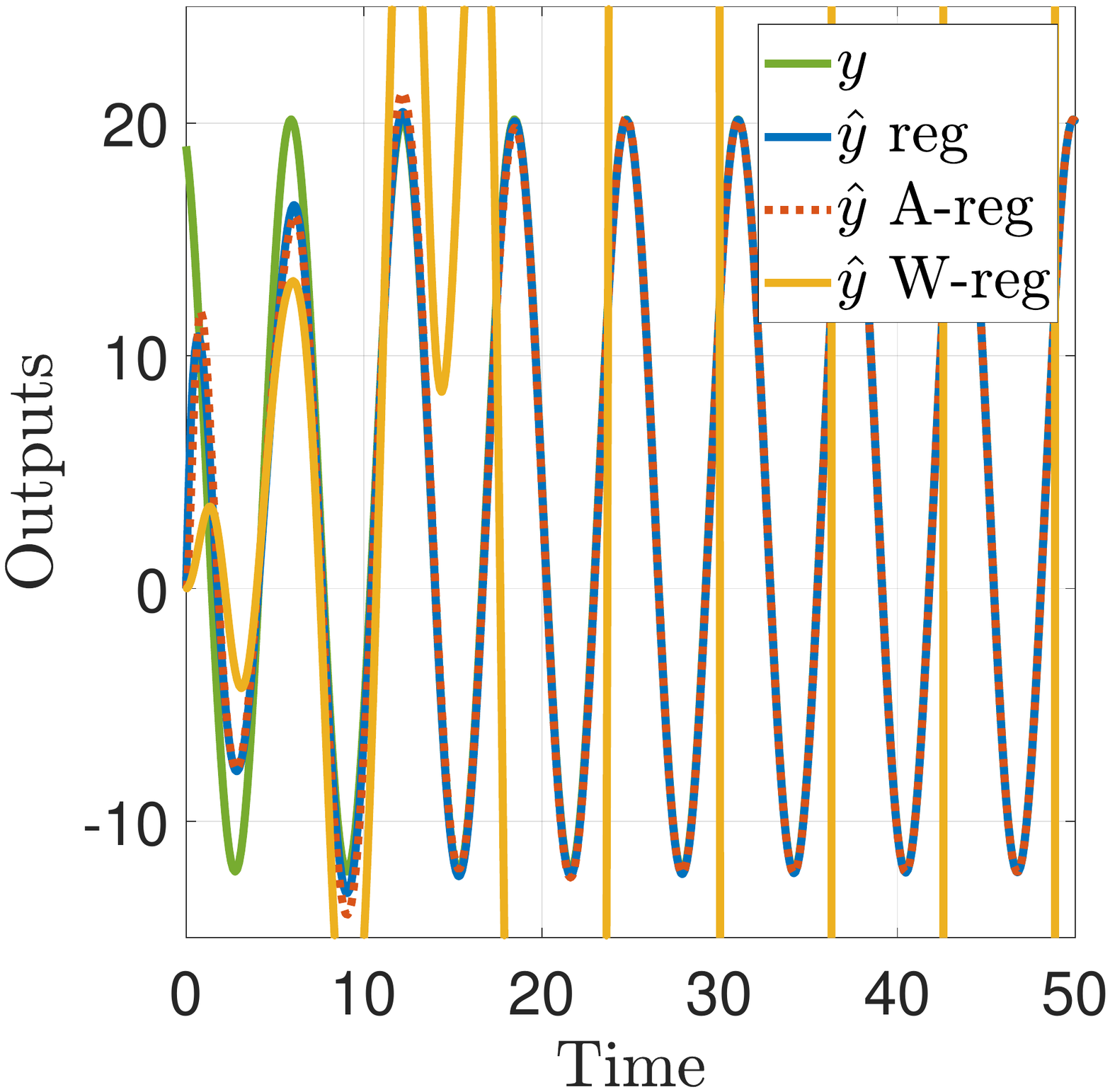}
	    \includegraphics[trim={0.25cm 3.5cm 1.5cm 4.4cm},clip,width=0.25\textwidth]{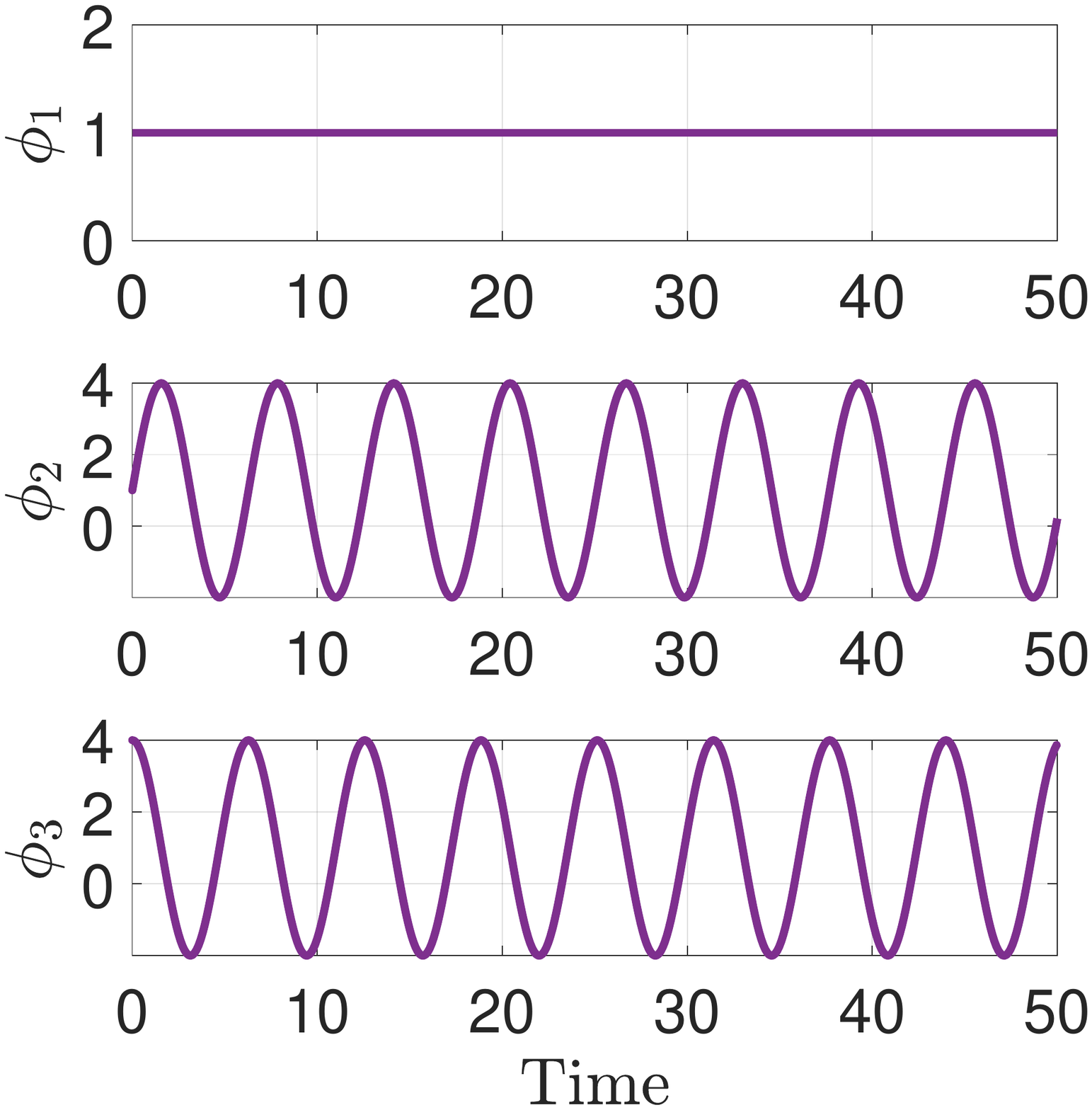}
	    }
        \caption{Time-varying regression: $\theta^*=[1,-2,~5]^T+Z$, where $Z\sim\text{Unif}\left([-10,10]^3\right)$. The feature response is persistently exciting (PE) with $\phi=[1,~1+3\sin(t),~1+3\cos(t)]^T$.}
        \label{f:Error1_PE_Response}
    \end{subfigure}
    
    \begin{subfigure}[b]{\textwidth}
        \centerline{
	    \includegraphics[trim={0.25cm 3.5cm 1.5cm 4.4cm},clip,width=0.25\textwidth]{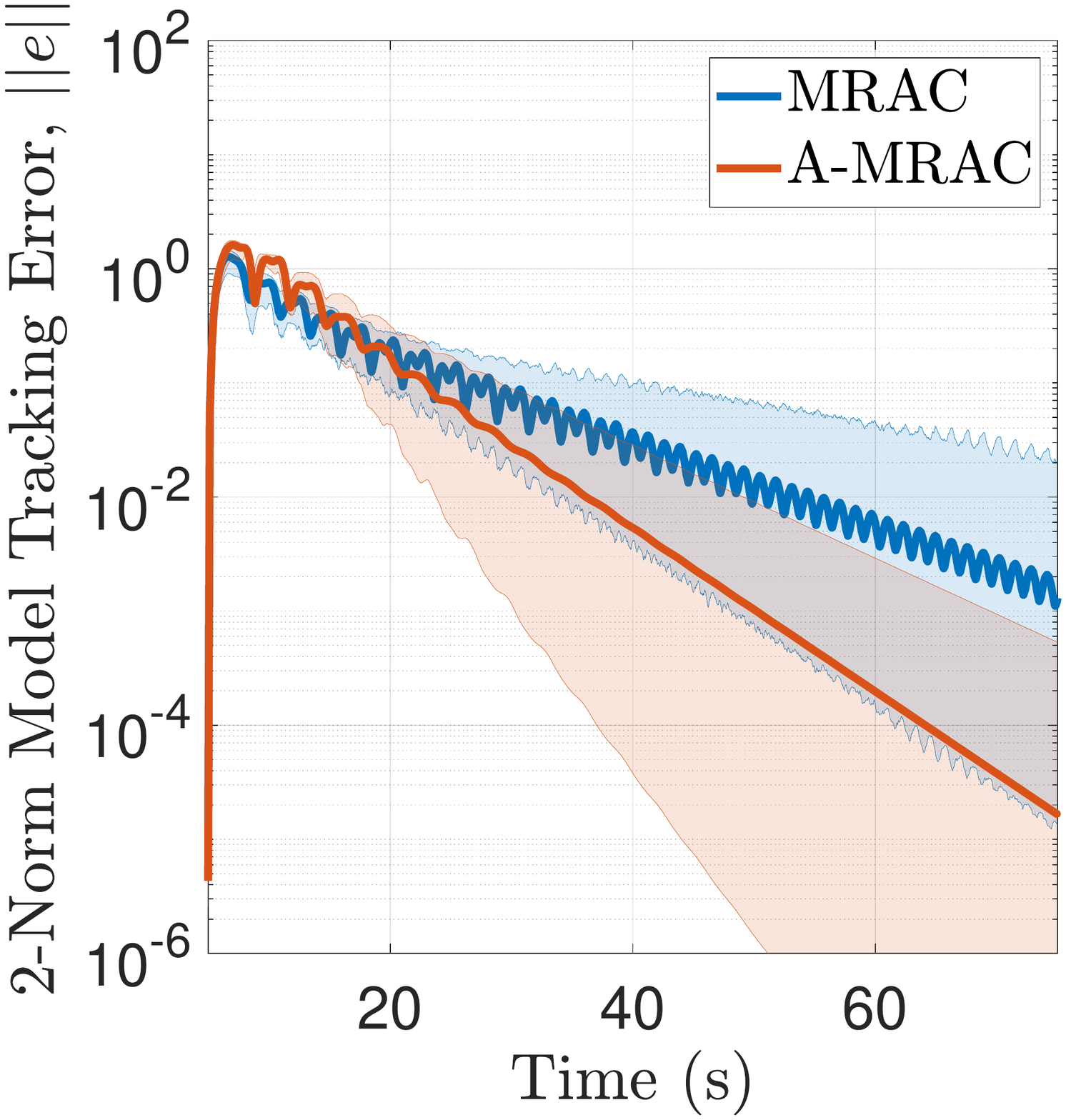}
	    \includegraphics[trim={0.25cm 3.5cm 1.5cm 4.4cm},clip,width=0.25\textwidth]{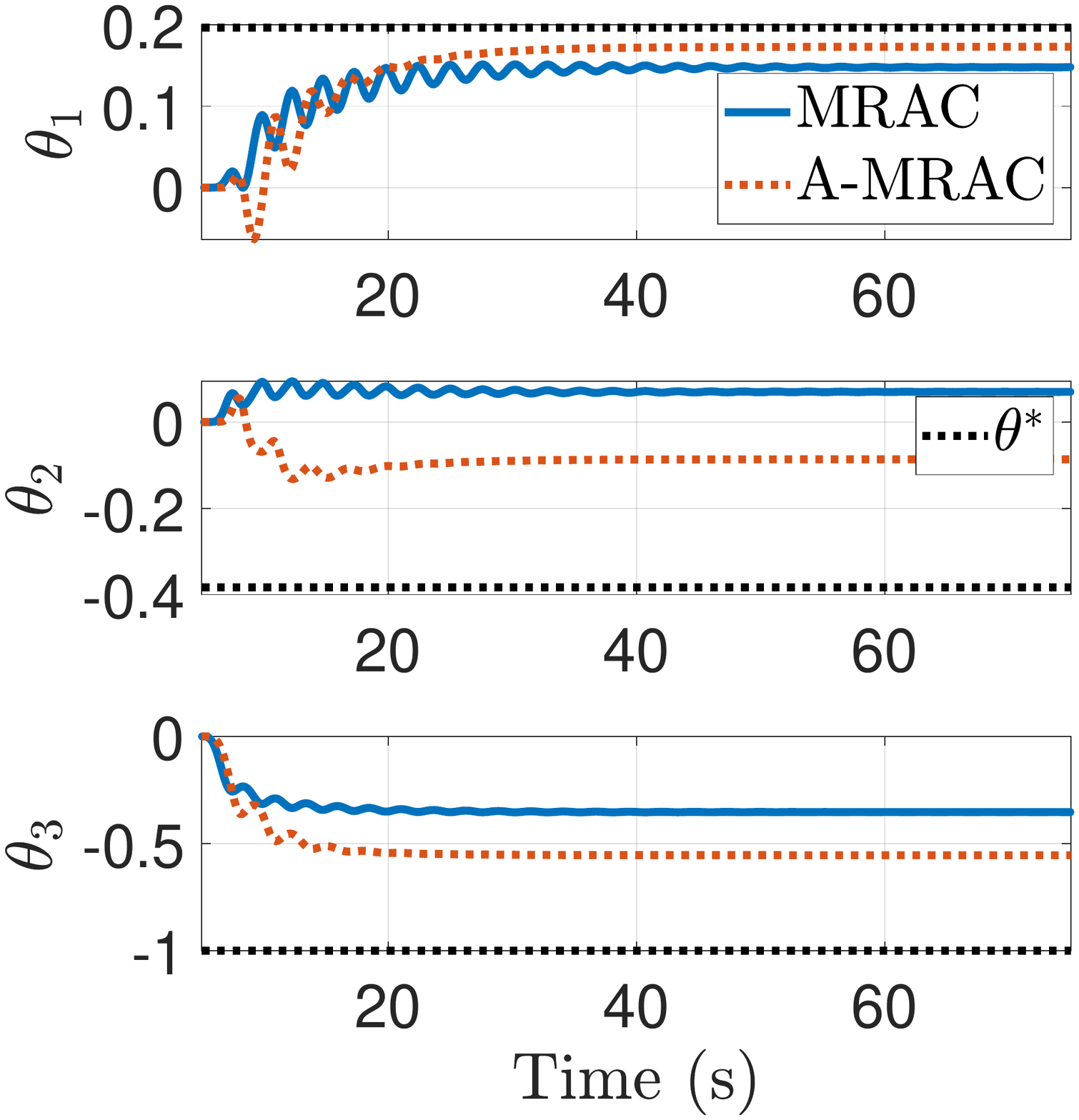}
	    \includegraphics[trim={0.25cm 3.5cm 1.5cm 4.4cm},clip,width=0.25\textwidth]{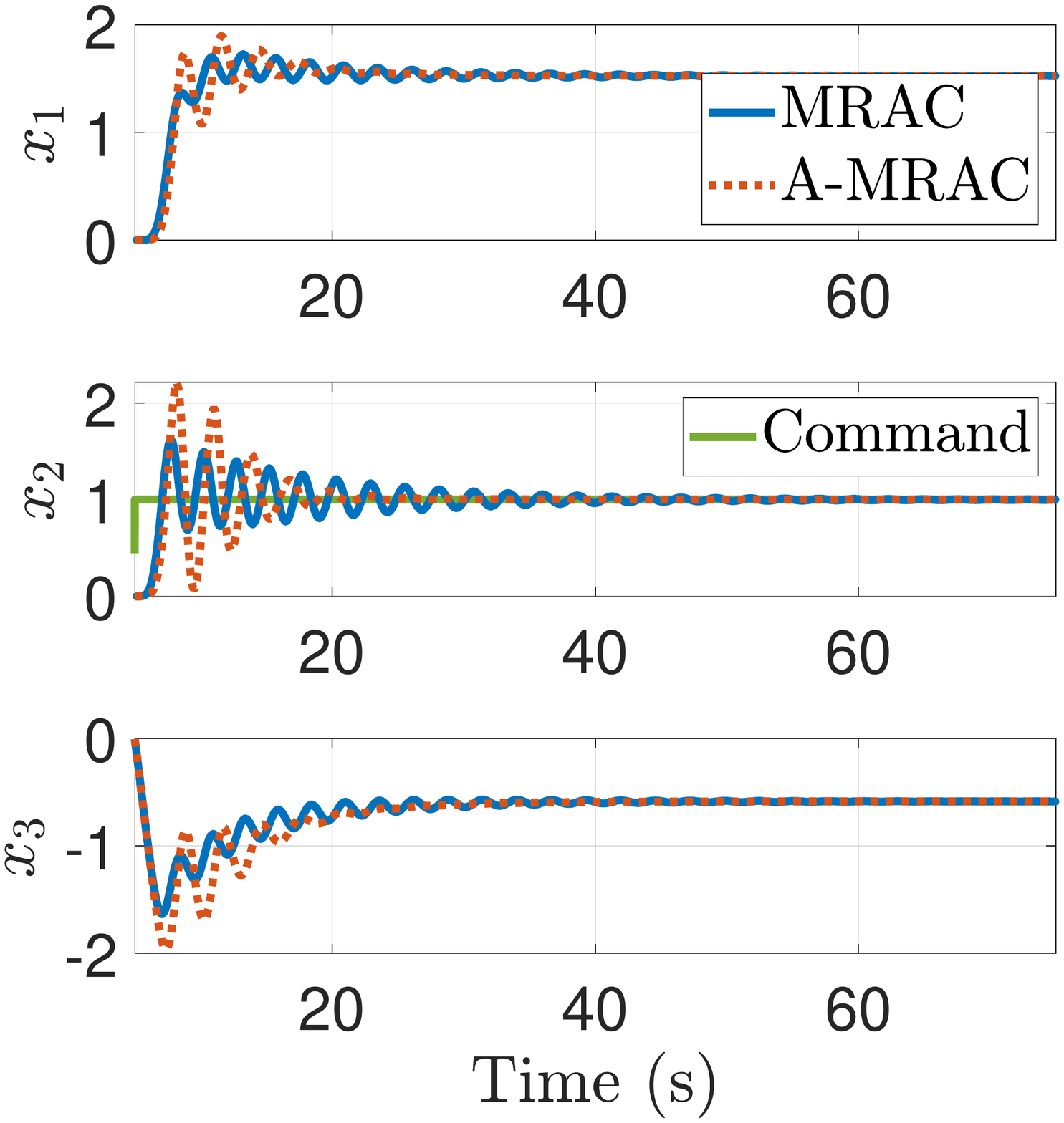}
	    \includegraphics[trim={0.25cm 3.5cm 1.5cm 4.4cm},clip,width=0.25\textwidth]{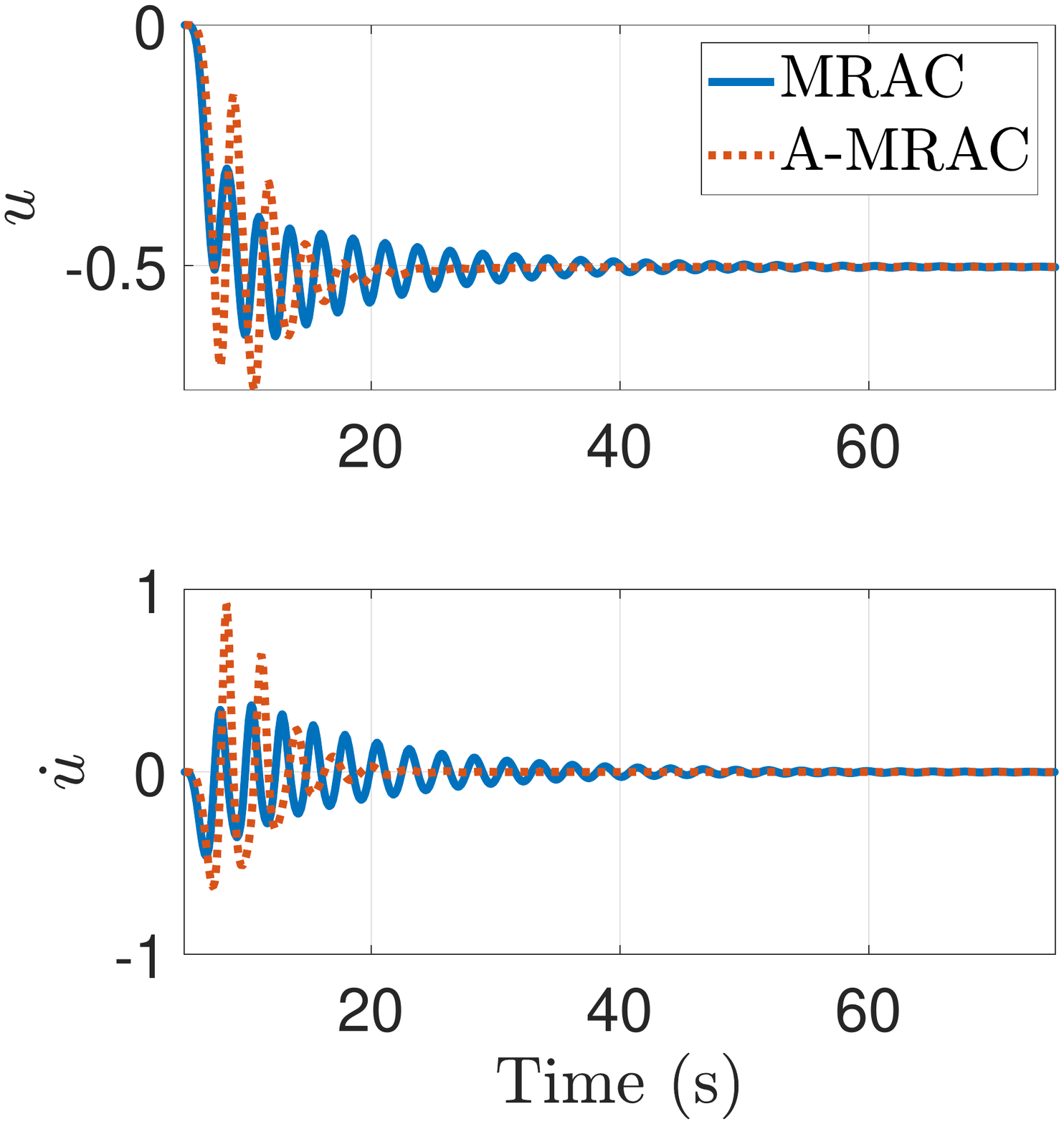}
	    }
        \caption{Adaptive control: $\theta^*=[0.1965,-0.3835,-1]^T\cdot W$, where $W\sim\text{Unif}\left([-1/2,2]\right)$. At time $t=5$, the command for the state $x_2$ to track changes to a value of $1$. Consequently, the states change to track the command.}
        \label{f:Error_2Step_Response}
    \end{subfigure}
    \caption{(to be viewed in color) Left plot: Output and model tracking error trajectories. Left-middle: Parameter trajectories. Right-middle: Output and state trajectories. Right: Time-varying features and input trajectories. $95\%$ intervals for error plots shown as shaded regions. Example trajectories shown as solid and dashed lines.}
    \label{f:figure_collection}
\end{figure}

For the first simulation shown in Figure \ref{f:Error1_2step_Response}, the feature vector was initially set equal to the zero with initial conditions of all algorithms initialized at zero (consistent with not knowing the feature variation and unknown parameter ahead of time). The gradient flow algorithm for regression (\ref{e:update_GF}) (denoted ``reg'') as well as higher order algorithms (denoted ``A-reg'' for the algorithm presented in this paper (\ref{e:Accelerated_GF}) and ``W-reg'' for the algorithm by \cite{Wibisono_2016}, parameterized in Table \ref{t:Comparison_Alg}) are seen to converge in output. The higher order algorithm presented in this paper (\ref{e:Accelerated_GF}) can be seen to converge at a faster rate however. As a separate note, given that the system does not have a persistently exciting feature $\phi$ (Appendix \ref{s:Definitions}, Definition \ref{d:PE}); the parameter $\theta$ does not converge to the true value.

Figure \ref{f:Error1_PE_Response} shows the response for persistently exciting features, which can be seen to consist of time-varying functions as compared to constant feature steps of Figure \ref{f:Error1_2step_Response}. Additional plots in Figure \ref{f:figure_collection2} of Appendix \ref{s:Experiment_MRAC_Implementation} show a progression in the increase of the time variation of the feature vector. It can be seen that as the time variation of the feature increases, the ``W-reg'' algorithm modeling Nesterov acceleration in continuous time becomes \emph{unstable}, as shown in the left plot of Figure \ref{f:Error1_PE_Response}. A similar destabilizing effect can occur for many higher order accelerated algorithms commonly employed in the machine learning community when features are time-varying. Our provably correct higher order learning algorithm in (\ref{e:Accelerated_GF}) maintains stability and convergence despite the feature time-variation. It should also be noted that the feature profile variation considered here is persistently exciting. Thus in addition to output error tending towards zero, as proved in Section \ref{ss:Stability_Error1}, the error in the parameter space can additionally be seen to tend towards zero (i.e. $\theta\rightarrow\theta^*$). The step changes and sinusoidal time-varying feature profiles considered in this section were chosen to be representative of model/concept shift in machine learning problems \cite{Gama_2014}, as well as system identification problems \cite{Ljung_1987}.

\subsection{State feedback adaptive control}
\label{ss:Error_2_Experiments}

A state feedback model reference adaptive control (MRAC) problem was simulated with the error model in (\ref{e:error_model_2}). The standard MRAC algorithm (\ref{e:MRAC}) and higher order algorithm (\ref{e:Accelerated_MRAC}) were compared in a simulation of linearized longitudinal dynamics of an F-16 aircraft with integral command tracking. More details regarding the simulation implementation can be found in Appendix \ref{s:Experiment_MRAC_Implementation}, including an explanation for the choice of the unknown parameter and definitions of relevant variables of this physically motivated example in adaptive flight control.

The simulation results are shown in Figure \ref{f:Error_2Step_Response}. Both the standard MRAC (\ref{e:MRAC}) and higher order MRAC (\ref{e:Accelerated_MRAC}) (denoted ``A-MRAC'') algorithms are seen to converge in both command tracking and model tracking error to zero. The higher order algorithm however, can be seen to converge at a faster rate. Additionally, the higher order algorithm can be seen to result in fewer oscillations which may be due to the presence of damping in the algorithm and the filtering effect as in (\ref{e:Accelerated_MRAC}). The rapid reduction in oscillations is desirable, particularly given that the system was provided a constant command.

\section{Conclusions and related work}
\label{s:Conclusion}

In this work we derived higher order algorithms for optimization and learning in time-varying and dynamical machine learning problems. The variational approach taken provides a unified method for analyzing both algebraic and dynamical error models, as demonstrated by the regression with time-varying features and adaptive control problems. Our higher order algorithms were proven to be stable, with bounded \emph{constant regret}, thus lending to application in real-time, safety-critical, sequential decision making problems where provably correct algorithms must be employed.

Learning for dynamical systems has been an active area of research within the machine learning community, especially within the area of reinforcement learning \cite{Bertsekas_2017,Sutton_2018,Tu_2018,Recht_2018}. There has also been a large increase in recent work studying learning and control for unknown linear dynamical systems: least squares \cite{Simchowitz_2018}, linear quadratic regulator robust control \cite{Dean_2018,Dean_2018a,Dean2018}, and spectral filtering \cite{Hazan_2017,Hazan_2018a}. One major difference between these works and the one presented here is that our algorithms are streaming and even provide for constant regret for open loop unstable systems.

This work continues in the tradition of \cite{Su_2016} and \cite{Wibisono_2016} whereby insight is gained into higher order gradient descent methods through a continuous lens. Continuous time analysis of machine learning algorithms is becoming increasingly prevalent in training deep neural networks \cite{Arora_2018,Arora_2019} as well as continuous networks \cite{Chen_2018,Arora_2019a}. Future work will be to obtain discrete time implementations of our algorithms \cite{Wilson_2018,Betancourt_2018}, and to connect those back to discrete time adaptive algorithms \cite{Goodwin_1980,Goodwin_1981,Goodwin_1984}.

\clearpage
\bibliography{References}

\begin{thebibliography}{10}

\bibitem{Duda_2001}
R.~O. Duda, P.~E. Hart, and D.~G. Stork, {\em Pattern Classification, 2nd
  Edition}.
\newblock John Wiley \& Sons, 2001.

\bibitem{Bishop_2006}
C.~M. Bishop, {\em Pattern Recognition and Machine Learning}.
\newblock Springer, 2006.

\bibitem{Jordan_2015}
M.~I. Jordan and T.~M. Mitchell, ``Machine learning: Trends, perspectives, and
  prospects,'' {\em Science}, vol.~349, pp.~255--260, jul 2015.

\bibitem{Goodfellow-et-al-2016}
I.~Goodfellow, Y.~Bengio, and A.~Courville, {\em Deep Learning}.
\newblock MIT Press, 2016.

\bibitem{Sastry_1989}
S.~Sastry and M.~Bodson, {\em Adaptive Control: Stability, Convergence and
  Robustness}.
\newblock Prentice-Hall, 1989.

\bibitem{Astrom_1995}
K.~J. {\AA}str{\"o}m and B.~Wittenmark, {\em Adaptive Control: Second Edition}.
\newblock Addison-Wesley Publishing Company, 1995.

\bibitem{Ioannou1996}
P.~A. Ioannou and J.~Sun, {\em Robust Adaptive Control}.
\newblock PTR Prentice-Hall, 1996.

\bibitem{Narendra2005}
K.~S. Narendra and A.~M. Annaswamy, {\em Stable Adaptive Systems}.
\newblock Dover, 2005.

\bibitem{Nesterov_1983}
Y.~Nesterov, ``A method of solving a convex programming problem with
  convergence rate ${O}(1/k^2)$,'' {\em Soviet Mathematics Doklady}, vol.~27,
  pp.~372--376, 1983.

\bibitem{Nesterov_2004}
Y.~Nesterov, {\em Introductory Lectures on Convex Optimization}.
\newblock Springer, 2004.

\bibitem{Beck_2009}
A.~Beck and M.~Teboulle, ``A fast iterative shrinkage-thresholding algorithm
  for linear inverse problems,'' {\em {SIAM} Journal on Imaging Sciences},
  vol.~2, pp.~183--202, jan 2009.

\bibitem{Bubeck_2015}
S.~Bubeck, ``Convex optimization: Algorithms and complexity,'' {\em Foundations
  and Trends{\textregistered} in Machine Learning}, vol.~8, no.~3-4,
  pp.~231--357, 2015.

\bibitem{Carmon_2018}
Y.~Carmon, J.~C. Duchi, O.~Hinder, and A.~Sidford, ``Accelerated methods for
  {NonConvex} optimization,'' {\em {SIAM} Journal on Optimization}, vol.~28,
  pp.~1751--1772, jan 2018.

\bibitem{Krizhevsky_2012}
A.~Krizhevsky, I.~Sutskever, and G.~E. Hinton, ``Imagenet classification with
  deep convolutional neural networks,'' in {\em Advances in Neural Information
  Processing Systems 25} (F.~Pereira, C.~J.~C. Burges, L.~Bottou, and K.~Q.
  Weinberger, eds.), pp.~1097--1105, Curran Associates, Inc., 2012.

\bibitem{Sutskever_2013}
I.~Sutskever, J.~Martens, G.~Dahl, and G.~Hinton, ``On the importance of
  initialization and momentum in deep learning,'' in {\em Proceedings of the
  30th International Conference on Machine Learning} (S.~Dasgupta and
  D.~McAllester, eds.), vol.~28 of {\em Proceedings of Machine Learning
  Research}, pp.~1139--1147, PMLR, 2013.

\bibitem{Duchi_2011}
J.~Duchi, E.~Hazan, and Y.~Singer, ``Adaptive subgradient methods for online
  learning and stochastic optimization,'' {\em Journal of Machine Learning
  Research}, vol.~12, pp.~2121--2159, July 2011.

\bibitem{Kingma_2017}
D.~P. Kingma and J.~L. Ba, ``Adam: A method for stochastic optimization,'' {\em
  arXiv preprint arXiv:1412.6980}, 2017.

\bibitem{Wilson_2017}
A.~C. Wilson, R.~Roelofs, M.~Stern, N.~Srebro, and B.~Recht, ``The marginal
  value of adaptive gradient methods in machine learning,'' in {\em Advances in
  Neural Information Processing Systems 30} (I.~Guyon, U.~V. Luxburg,
  S.~Bengio, H.~Wallach, R.~Fergus, S.~Vishwanathan, and R.~Garnett, eds.),
  pp.~4148--4158, Curran Associates, Inc., 2017.

\bibitem{Su_2016}
W.~Su, S.~Boyd, and E.~J. Cand{{\`e}}s, ``A differential equation for modeling
  nesterov's accelerated gradient method: Theory and insights,'' {\em Journal
  of Machine Learning Research}, vol.~17, no.~153, pp.~1--43, 2016.

\bibitem{Wibisono_2016}
A.~Wibisono, A.~C. Wilson, and M.~I. Jordan, ``A variational perspective on
  accelerated methods in optimization,'' {\em Proceedings of the National
  Academy of Sciences}, vol.~113, pp.~E7351--E7358, nov 2016.

\bibitem{Wilson_2016}
A.~C. Wilson, B.~Recht, and M.~I. Jordan, ``A lyapunov analysis of momentum
  methods in optimization,'' {\em arXiv preprint arXiv:1611.02635}, 2016.

\bibitem{Betancourt_2018}
M.~Betancourt, M.~I. Jordan, and A.~C. Wilson, ``On symplectic optimization,''
  {\em arXiv preprint arXiv:1802.03653}, 2018.

\bibitem{Wilson_2018}
A.~Wilson, {\em Lyapunov Arguments in Optimization}.
\newblock PhD thesis, University of California, Berkeley, 2018.

\bibitem{Ioannou_1984}
P.~A. Ioannou and P.~V. Kokotovic, ``Robust redesign of adaptive control,''
  {\em {IEEE} Transactions on Automatic Control}, vol.~29, pp.~202--211, mar
  1984.

\bibitem{Karason_1994}
S.~P. Karason and A.~M. Annaswamy, ``Adaptive control in the presence of input
  constraints,'' {\em {IEEE} Transactions on Automatic Control}, vol.~39,
  no.~11, pp.~2325--2330, 1994.

\bibitem{Bekiaris_Liberis_2010}
N.~Bekiaris-Liberis and M.~Krstic, ``Delay-adaptive feedback for linear
  feedforward systems,'' {\em Systems {\&} Control Letters}, vol.~59,
  pp.~277--283, may 2010.

\bibitem{Gibson2013}
T.~E. Gibson, A.~M. Annaswamy, and E.~Lavretsky, ``On adaptive control with
  closed-loop reference models: Transients, oscillations, and peaking,'' {\em
  {IEEE} Access}, vol.~1, pp.~703--717, 2013.

\bibitem{Morse_1992}
A.~S. Morse, ``High-order parameter tuners for the adaptive control of linear
  and nonlinear systems,'' in {\em Systems, Models and Feedback: Theory and
  Applications}, pp.~339--364, Birkhäuser Boston, 1992.

\bibitem{Evesque_2003}
S.~Evesque, A.~M. Annaswamy, S.~Niculescu, and A.~P. Dowling, ``Adaptive
  control of a class of time-delay systems,'' {\em Journal of Dynamic Systems,
  Measurement, and Control}, vol.~125, no.~2, p.~186, 2003.

\bibitem{Hopfield_1982}
J.~J. Hopfield, ``Neural networks and physical systems with emergent collective
  computational abilities,'' {\em Proceedings of the National Academy of
  Sciences}, vol.~79, pp.~2554--2558, apr 1982.

\bibitem{Hopfield_1984}
J.~J. Hopfield, ``Neurons with graded response have collective computational
  properties like those of two-state neurons.,'' {\em Proceedings of the
  National Academy of Sciences}, vol.~81, pp.~3088--3092, may 1984.

\bibitem{Jordan_1986}
M.~I. Jordan, ``Attractor dynamics and parallelism in a connectionist
  sequential machine,'' in {\em Proc. of the Eighth Annual Conference of the
  Cognitive Science Society}, 1986.

\bibitem{Hochreiter_1997}
S.~Hochreiter and J.~Schmidhuber, ``Long short-term memory,'' {\em Neural
  Computation}, vol.~9, pp.~1735--1780, nov 1997.

\bibitem{Dietterich_2002}
T.~G. Dietterich, ``Machine learning for sequential data: A review,'' in {\em
  Lecture Notes in Computer Science}, pp.~15--30, Springer Berlin Heidelberg,
  2002.

\bibitem{Kuznetsov_2015}
V.~Kuznetsov and M.~Mohri, ``Learning theory and algorithms for forecasting
  non-stationary time series,'' in {\em Advances in Neural Information
  Processing Systems 28} (C.~Cortes, N.~D. Lawrence, D.~D. Lee, M.~Sugiyama,
  and R.~Garnett, eds.), pp.~541--549, Curran Associates, Inc., 2015.

\bibitem{Hall_2015}
E.~C. Hall and R.~M. Willett, ``Online convex optimization in dynamic
  environments,'' {\em {IEEE} Journal of Selected Topics in Signal Processing},
  vol.~9, pp.~647--662, jun 2015.

\bibitem{Zinkevich_2003}
M.~Zinkevich, ``Online convex programming and generalized infinitesimal
  gradient ascent,'' in {\em Proceedings of the 20th International Conference
  on Machine Learning (ICML-03)}, pp.~928--936, 2003.

\bibitem{Hazan_2007}
E.~Hazan, A.~Agarwal, and S.~Kale, ``Logarithmic regret algorithms for online
  convex optimization,'' {\em Machine Learning}, vol.~69, pp.~169--192, aug
  2007.

\bibitem{Hazan_2008}
E.~Hazan, A.~Rakhlin, and P.~L. Bartlett, ``Adaptive online gradient descent,''
  in {\em Advances in Neural Information Processing Systems 20} (J.~C. Platt,
  D.~Koller, Y.~Singer, and S.~T. Roweis, eds.), pp.~65--72, Curran Associates,
  Inc., 2008.

\bibitem{Hazan_2016}
E.~Hazan, ``Introduction to online convex optimization,'' {\em Foundations and
  Trends{\textregistered} in Optimization}, vol.~2, no.~3-4, pp.~157--325,
  2016.

\bibitem{Shalev_Shwartz_2011}
S.~Shalev-Shwartz, ``Online learning and online convex optimization,'' {\em
  Foundations and Trends{\textregistered} in Machine Learning}, vol.~4, no.~2,
  pp.~107--194, 2011.

\bibitem{Raginsky_2010}
M.~Raginsky, A.~Rakhlin, and S.~Yuksel, ``Online convex programming and
  regularization in adaptive control,'' in {\em 49th {IEEE} Conference on
  Decision and Control ({CDC})}, {IEEE}, 2010.

\bibitem{Annaswamy_1996}
A.~M. Annaswamy and S.-H. Yu, ``{$\theta$}-adaptive neural networks: a new
  approach to parameter estimation,'' {\em {IEEE} Transactions on Neural
  Networks}, vol.~7, pp.~907--918, jul 1996.

\bibitem{Yu_1996}
S.-H. Yu and A.~M. Annaswamy, ``Neural control for nonlinear dynamic systems,''
  in {\em Advances in Neural Information Processing Systems 8} (D.~S.
  Touretzky, M.~C. Mozer, and M.~E. Hasselmo, eds.), pp.~1010--1016, MIT Press,
  1996.

\bibitem{Yu_1998}
S.-H. Yu and A.~M. Annaswamy, ``Stable neural controllers for nonlinear dynamic
  systems,'' {\em Automatica}, vol.~34, pp.~641--650, may 1998.

\bibitem{Loh_1999}
A.-P. Loh, A.~M. Annaswamy, and F.~P. Skantze, ``Adaptation in the presence of
  a general nonlinear parameterization: An error model approach,'' {\em {IEEE}
  Transactions on Automatic Control}, vol.~44, no.~9, pp.~1634--1652, 1999.

\bibitem{Cao_2003}
C.~Cao, A.~M. Annaswamy, and A.~Kojic, ``Parameter convergence in nonlinearly
  parameterized systems,'' {\em {IEEE} Transactions on Automatic Control},
  vol.~48, pp.~397--412, mar 2003.

\bibitem{Hazan_2017}
E.~Hazan, K.~Singh, and C.~Zhang, ``Learning linear dynamical systems via
  spectral filtering,'' in {\em Advances in Neural Information Processing
  Systems 30} (I.~Guyon, U.~V. Luxburg, S.~Bengio, H.~Wallach, R.~Fergus,
  S.~Vishwanathan, and R.~Garnett, eds.), pp.~6702--6712, Curran Associates,
  Inc., 2017.

\bibitem{Hazan_2018a}
E.~Hazan, H.~Lee, K.~Singh, C.~Zhang, and Y.~Zhang, ``Spectral filtering for
  general linear dynamical systems,'' in {\em Advances in Neural Information
  Processing Systems 31} (S.~Bengio, H.~Wallach, H.~Larochelle, K.~Grauman,
  N.~Cesa-Bianchi, and R.~Garnett, eds.), pp.~4639--4648, Curran Associates,
  Inc., 2018.

\bibitem{Recht_2018}
B.~Recht, ``A tour of reinforcement learning: The view from continuous
  control,'' {\em arXiv preprint arXiv:1806.09460}, 2018.

\bibitem{Dean_2018}
S.~Dean, H.~Mania, N.~Matni, B.~Recht, and S.~Tu, ``Regret bounds for robust
  adaptive control of the linear quadratic regulator,'' in {\em Advances in
  Neural Information Processing Systems 31} (S.~Bengio, H.~Wallach,
  H.~Larochelle, K.~Grauman, N.~Cesa-Bianchi, and R.~Garnett, eds.),
  pp.~4192--4201, Curran Associates, Inc., 2018.

\bibitem{Dean_2018a}
S.~Dean, H.~Mania, N.~Matni, B.~Recht, and S.~Tu, ``On the sample complexity of
  the linear quadratic regulator,'' {\em arXiv preprint arXiv:1710.01688},
  2018.

\bibitem{Dean2018}
S.~Dean, S.~Tu, N.~Matni, and B.~Recht, ``Safely learning to control the
  constrained linear quadratic regulator,'' {\em arXiv preprint
  arXiv:1809.10121}, 2018.

\bibitem{Luenberger_1969}
D.~G. Luenberger, {\em Optimization by Vector Space Methods}.
\newblock John Wiley \& Sons, 1969.

\bibitem{Ioffe_2015}
S.~Ioffe and C.~Szegedy, ``Batch normalization: Accelerating deep network
  training by reducing internal covariate shift,'' {\em arXiv preprint
  arXiv:1502.03167}, 2015.

\bibitem{O_Donoghue_2013}
B.~O'Donoghue and E.~Cand{\`{e}}s, ``Adaptive restart for accelerated gradient
  schemes,'' {\em Foundations of Computational Mathematics}, vol.~15,
  pp.~715--732, jul 2013.

\bibitem{Gama_2014}
J.~Gama, I.~{\v{Z}}liobait{\.{e}}, A.~Bifet, M.~Pechenizkiy, and A.~Bouchachia,
  ``A survey on concept drift adaptation,'' {\em {ACM} Computing Surveys},
  vol.~46, pp.~1--37, mar 2014.

\bibitem{Ljung_1987}
L.~Ljung, {\em System Identification: Theory for the User}.
\newblock Prentice-Hall, 1987.

\bibitem{Bertsekas_2017}
D.~P. Bertsekas, {\em Dynamic Programming and Optimal Control}, vol.~1.
\newblock Athena Scientific, 2017.

\bibitem{Sutton_2018}
R.~S. Sutton and A.~G. Barto, {\em Reinforcement Learning: An Introduction}.
\newblock MIT Press, 2018.

\bibitem{Tu_2018}
S.~Tu and B.~Recht, ``Least-squares temporal difference learning for the linear
  quadratic regulator,'' in {\em Proceedings of the 35th International
  Conference on Machine Learning} (J.~Dy and A.~Krause, eds.), vol.~80 of {\em
  Proceedings of Machine Learning Research}, (Stockholmsmässan, Stockholm
  Sweden), pp.~5005--5014, PMLR, July 2018.

\bibitem{Simchowitz_2018}
M.~Simchowitz, H.~Mania, S.~Tu, M.~I. Jordan, and B.~Recht, ``Learning without
  mixing: Towards a sharp analysis of linear system identification,'' in {\em
  Proceedings of the 31st Conference On Learning Theory} (S.~Bubeck,
  V.~Perchet, and P.~Rigollet, eds.), vol.~75 of {\em Proceedings of Machine
  Learning Research}, pp.~439--473, PMLR, July 2018.

\bibitem{Arora_2018}
S.~Arora, N.~Cohen, and E.~Hazan, ``On the optimization of deep networks:
  Implicit acceleration by overparameterization,'' in {\em Proceedings of the
  35th International Conference on Machine Learning} (J.~Dy and A.~Krause,
  eds.), vol.~80 of {\em Proceedings of Machine Learning Research},
  (Stockholmsmässan, Stockholm Sweden), pp.~244--253, PMLR, July 2018.

\bibitem{Arora_2019}
S.~Arora, N.~Cohen, N.~Golowich, and W.~Hu, ``A convergence analysis of
  gradient descent for deep linear neural networks,'' in {\em International
  Conference on Learning Representations}, 2019.

\bibitem{Chen_2018}
R.~T.~Q. Chen, Y.~Rubanova, J.~Bettencourt, and D.~Duvenaud, ``Neural ordinary
  differential equations,'' in {\em Advances in Neural Information Processing
  Systems 31} (S.~Bengio, H.~Wallach, H.~Larochelle, K.~Grauman,
  N.~Cesa-Bianchi, and R.~Garnett, eds.), pp.~6572--6583, Curran Associates,
  Inc., 2018.

\bibitem{Arora_2019a}
S.~Arora, S.~S. Du, W.~Hu, Z.~Li, R.~Salakhutdinov, and R.~Wang, ``On exact
  computation with an infinitely wide neural net,'' {\em arXiv preprint
  arXiv:1904.11955}, 2019.

\bibitem{Goodwin_1980}
G.~C. Goodwin, P.~J. Ramadge, and P.~E. Caines, ``Discrete-time multivariable
  adaptive control,'' {\em {IEEE} Transactions on Automatic Control}, vol.~25,
  pp.~449--456, jun 1980.

\bibitem{Goodwin_1981}
G.~C. Goodwin, P.~J. Ramadge, and P.~E. Caines, ``Discrete time stochastic
  adaptive control,'' {\em {SIAM} Journal on Control and Optimization},
  vol.~19, pp.~829--853, nov 1981.

\bibitem{Goodwin_1984}
G.~C. Goodwin and K.~S. Sin, {\em Adaptive Filtering Prediction and Control}.
\newblock Prentice Hall, 1984.

\bibitem{Jenkins_2018}
B.~M. Jenkins, A.~M. Annaswamy, E.~Lavretsky, and T.~E. Gibson, ``Convergence
  properties of adaptive systems and the definition of exponential stability,''
  {\em {SIAM} Journal on Control and Optimization}, vol.~56, pp.~2463--2484,
  jan 2018.

\bibitem{Popov_1973}
V.~M. Popov, {\em Hyperstability of Control Systems}.
\newblock Springer-Verlag, 1973.

\bibitem{Kalman_1960}
R.~E. Kalman and J.~E. Bertram, ``Control system analysis and design via the
  {\textquotedblleft}second method{\textquotedblright} of lyapunov:
  I{\textemdash}continuous-time systems,'' {\em Journal of Basic Engineering},
  vol.~82, no.~2, p.~371, 1960.

\bibitem{Stevens2003}
B.~L. Stevens and F.~L. Lewis, {\em Aircraft Control and Simulation}.
\newblock Wiley, 2003.

\bibitem{Lavretsky2013}
E.~Lavretsky and K.~A. Wise, {\em Robust and Adaptive Control with Aerospace
  Applications}.
\newblock Springer London, 2013.

\end{thebibliography}
\bibliographystyle{ieeetr}

\clearpage
\appendix
\noindent{\LARGE \textbf{Appendix}}

\noindent{\textbf{Organization of the appendix}}

Mathematical definitions and Barbalat's lemma are provided in Appendix \ref{s:Definitions}. Lyapunov stability definitions, stability analysis, and regret bounds of the algorithms presented in this paper are provided in Appendix \ref{s:Stability_Analysis}. Appendix \ref{s:Experiment_MRAC_Implementation} provides details regarding the model reference adaptive control simulation implementation for the physically motivated example in flight control, as well as additional plots demonstrating the effects of an increase in the time variation of a feature on the stability of the regression algorithms presented in this paper.

\section{Definitions and Barbalat's Lemma}\label{s:Definitions}
This section details useful definitions regarding signals which are used throughout this paper.

\begin{definition}[See \cite{Narendra2005}]\label{d:L_p}
    For any fixed $p\in[1,\infty)$, $f:\mathbb{R}^+\rightarrow\mathbb{R}$ is defined to belong to $\mathcal{L}_p$ if $f$ is locally integrable and
    \begin{equation*}
        \lVert f(t)\rVert_{\mathcal{L}_p}\triangleq\left(\lim_{t\rightarrow\infty}\int_0^t\lVert f(\tau)\rVert^pd\tau\right)^{\frac{1}{p}}<\infty.
    \end{equation*}
    When $p=\infty$, $f\in\mathcal{L}_{\infty}$, if,
    \begin{equation*}
        \lVert f\rVert_{\mathcal{L}_{\infty}}\triangleq\sup_{t\geq0}\lVert f(t)\rVert<\infty.
    \end{equation*}
\end{definition}
The notion of persistence of excitation has a long history in adaptive systems. It is commonly used to denote the condition that all states in the adaptive system are excited to allow for perfect system identification. Here it refers the the condition of all of the states of a system (collectively the regressor) being excited such that parameter convergence occurs. The following definition is regarding persistence of excitation:
\begin{definition}[See \cite{Jenkins_2018}]\label{d:PE}
    Let $\omega\in[t_0,\infty)\rightarrow\mathbb{R}^p$ be a time-varying parameter with initial condition defined as $\omega_0=\omega(t_0)$; then the parameterized function of time $y(t,\omega):[t_0,\infty)\times\mathbb{R}^p\rightarrow\mathbb{R}^m$ is persistently exciting if there exists $T>0$ and $\alpha>0$ such that
    \begin{equation*}
        \int_t^{t+T}y(\tau,\omega)y^T(\tau,\omega)d\tau\succeq\alpha I
    \end{equation*}
    for all $t\geq t_0$ and $\omega_0\in\mathbb{R}^p$.
\end{definition}
The notation $X\succeq Y$ denotes that $X-Y$ is positive semidefinite for square matrices $X,Y$ of the same dimension.

The following lemma was attributed to Barbalat in \cite{Popov_1973} and has found significant use in the field of adaptive control and nonlinear control. The version from \cite{Narendra2005} is stated below with an associated corollary:
\begin{lemma}[See \cite{Narendra2005}]
    If $f:\mathbb{R}^+\rightarrow\mathbb{R}$ is uniformly continuous for $t\geq0$, and if the limit of the integral
    \begin{equation*}
        \lim_{t\rightarrow\infty}\int_0^t|f(\tau)|d\tau
    \end{equation*}
    exists and is finite, then
    \begin{equation*}
        \lim_{t\rightarrow\infty}f(t)=0.
    \end{equation*}
\end{lemma}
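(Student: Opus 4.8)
The plan is to prove the statement by contradiction: if $f$ does not tend to $0$, then uniform continuity forces $|f|$ to stay bounded away from $0$ over a whole interval of fixed positive length around each of infinitely many arbitrarily large times, and this makes $\int_0^t|f|$ accumulate an unbounded amount of mass, contradicting the assumed finiteness of its limit. The single ingredient I would extract from the hypothesis first is that the monotone function $g(t):=\int_0^t|f(\tau)|\,d\tau$ converges to a finite limit $L$, hence is Cauchy at infinity: for every $\eta>0$ there is $T_\eta$ such that $\int_s^t|f(\tau)|\,d\tau=g(t)-g(s)<\eta$ for all $t\ge s\ge T_\eta$; equivalently, the tails $\int_s^\infty|f|$ vanish as $s\to\infty$.

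Next I would set up the contradiction. Assume $\lim_{t\to\infty}f(t)\ne 0$. Then there are $\varepsilon>0$ and a sequence $t_n\uparrow\infty$ with $|f(t_n)|\ge\varepsilon$ for all $n$. By uniform continuity there is a modulus $\delta>0$, \emph{independent of $n$}, with $|f(t)-f(s)|<\varepsilon/2$ whenever $|t-s|\le\delta$. Hence, by the reverse triangle inequality, on each interval $I_n:=[t_n,t_n+\delta]$ one has $|f(t)|\ge|f(t_n)|-|f(t)-f(t_n)|>\varepsilon-\varepsilon/2=\varepsilon/2$, so
\[
\int_{I_n}|f(\tau)|\,d\tau\ \ge\ \frac{\delta\varepsilon}{2}\qquad\text{for every }n.
\]

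To finish, apply the tail estimate with $\eta=\delta\varepsilon/2$ to get $T:=T_\eta$, choose $n$ with $t_n>T$, and observe that $I_n\subseteq[T,\,t_n+\delta]$ together with $|f|\ge 0$ give
\[
\frac{\delta\varepsilon}{2}=\eta\ >\ \int_{T}^{\,t_n+\delta}|f(\tau)|\,d\tau\ \ge\ \int_{I_n}|f(\tau)|\,d\tau\ \ge\ \frac{\delta\varepsilon}{2},
\]
which is absurd. Therefore $\lim_{t\to\infty}f(t)=0$.

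The only step that needs genuine care — and the one I would flag as the crux — is producing the \emph{uniform} lower bound $|f|>\varepsilon/2$ on the whole interval $I_n$ with a length $\delta$ that does not shrink with $n$: this is precisely where uniform continuity (as opposed to ordinary continuity) is indispensable. Once that is in hand, the rest is bookkeeping, and phrasing the integral hypothesis through its vanishing tails removes any need to extract a pairwise-disjoint subsequence of the $I_n$.
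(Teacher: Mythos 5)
Your proof is correct: the contradiction argument via uniform continuity (a uniform $\delta$-interval around each $t_n$ on which $|f|>\varepsilon/2$, clashing with the vanishing tails of $\int_0^t|f(\tau)|\,d\tau$) is precisely the standard proof of Barbalat's lemma given in the cited reference, and the paper itself states the lemma without reproducing a proof. Your use of the Cauchy criterion on the tails in place of extracting disjoint intervals is a harmless streamlining, made especially clean here because the integrand is $|f|$ and the integral is monotone.
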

\begin{corollary}\label{c:Barbalat_Corollary}
    If $f\in\mathcal{L}_2\cap\mathcal{L}_{\infty}$, and $\dot{f}\in\mathcal{L}_{\infty}$, then $\lim_{t\rightarrow\infty}f(t)=0.$
\end{corollary}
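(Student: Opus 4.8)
The plan is to deduce this from Barbalat's lemma (stated just above) applied not to $f$ itself but to the auxiliary signal $g:=f^2$. First I would verify the two hypotheses of that lemma for $g$. For \emph{uniform continuity}: the assumption $\dot f\in\mathcal{L}_{\infty}$ means $f$ has a bounded derivative and is therefore Lipschitz with constant $\lVert\dot f\rVert_{\mathcal{L}_{\infty}}$ (recovering $f$ as the integral of $\dot f$, which is legitimate for the absolutely continuous signals produced by the ODEs in this paper); combining this with $f\in\mathcal{L}_{\infty}$ makes $g=f^2$ Lipschitz as well, since $|f(t)^2-f(s)^2|=|f(t)+f(s)|\,|f(t)-f(s)|\le 2\lVert f\rVert_{\mathcal{L}_{\infty}}\lVert\dot f\rVert_{\mathcal{L}_{\infty}}\,|t-s|$, hence uniformly continuous. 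For the \emph{integral condition}: $g=f^2\ge 0$, so $t\mapsto\int_0^t g(\tau)\,d\tau$ is nondecreasing, and it is bounded above because $f\in\mathcal{L}_2$ says precisely that $\lim_{t\rightarrow\infty}\int_0^t\lVert f(\tau)\rVert^2 d\tau=\lVert f\rVert_{\mathcal{L}_2}^2<\infty$; a nondecreasing function bounded above has a finite limit, so $\lim_{t\rightarrow\infty}\int_0^t|g(\tau)|\,d\tau$ exists and is finite.

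With both hypotheses verified, Barbalat's lemma applied to $g$ yields $\lim_{t\rightarrow\infty}g(t)=0$, i.e. $\lim_{t\rightarrow\infty}f^2(t)=0$, and taking square roots gives $\lim_{t\rightarrow\infty}f(t)=0$, which is the claim.

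I do not expect a genuine obstacle here; the only point deserving a careful sentence is the passage from $\dot f\in\mathcal{L}_{\infty}$ to Lipschitz continuity of $f$, and then the observation that the square of a bounded Lipschitz function is again Lipschitz — this is what lets one invoke Barbalat's lemma for $g$ without assuming $g$ itself differentiable. An alternative, self-contained route that sidesteps Barbalat would be a direct contradiction argument: if $f(t)\not\rightarrow 0$ there exist $\varepsilon>0$ and $t_n\rightarrow\infty$, and by uniform continuity of $f$ one can extract disjoint intervals of a fixed positive length about the $t_n$ on which $|f|\ge\varepsilon/2$, forcing $\int_0^\infty f^2\,d\tau=\infty$ and contradicting $f\in\mathcal{L}_2$. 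Since Barbalat's lemma is already available in the paper, however, the $g=f^2$ reduction above is the shortest path and is the one I would write out.
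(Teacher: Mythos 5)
Your proof is correct and follows exactly the route the paper implicitly relies on (the corollary is stated without proof as the standard consequence of Barbalat's lemma): apply the lemma to $g=f^2$, with $f\in\mathcal{L}_2$ supplying the finite limit of $\int_0^t|g(\tau)|\,d\tau$ and $f,\dot f\in\mathcal{L}_{\infty}$ supplying uniform continuity of $g$, then conclude $f^2(t)\rightarrow0$ and hence $f(t)\rightarrow0$. Your Lipschitz argument for $g$ is a clean way to avoid assuming $g$ differentiable, and no gap remains.
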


\clearpage
\section{Stability analysis}\label{s:Stability_Analysis}
An overview of Lyapunov functions and their use in stability analysis is presented in Section \ref{ss:Stability_Lyapunov}. Stability analysis for the first order update law of Section \ref{ss:TV_Reg} is presented in Section \ref{ss:Stability_TV_Reg}. Section \ref{ss:Stability_MRAC} then presents stability analysis of the first order update law of Section \ref{ss:MRAC}. Stability analysis for the Lyapunov function by \cite{Wibisono_2016} in Table \ref{t:Comparison_Lyap} for time-varying regression is presented in Section \ref{ss:Stability_Wibisono}. Section \ref{ss:Stability_Error1} details the proof of stability of the higher order algorithm derived in Section \ref{ss:Accelerated_Error1}. Stability analysis is provided for the higher order update law of Section \ref{ss:Accelerated_Error2} in Section \ref{ss:Stability_Error2}. The connections between Lyapunov stability and constant regret bounds are made in Section \ref{ss:Regret}.

\subsection{Lyapunov functions}\label{ss:Stability_Lyapunov}

This section provides a primer on Lyapunov functions and some of their common uses. While Lyapunov functions are ubiquitous in control theory and many similar definitions exist, this section was adapted from the definitions by \cite{Narendra2005}. Consider a general nonlinear dynamical system of the form:
\begin{equation}\label{e:nonlinear_eq}
    \dot{x}=f(x,t),\quad x(t_0)=x_0
\end{equation}
where $f(0,t)=0$ $\forall t>0$. Lyapunov functions are often used to determine whether the equilibrium state of the dynamical system in (\ref{e:nonlinear_eq}) is stable, without explicitly finding the solution of (\ref{e:nonlinear_eq}). This is due to the potential difficulty in finding a solution of the nonlinear differential equation in (\ref{e:nonlinear_eq}). The method follows from finding a scalar function $V(x,t)$ of the states $x$ of a system and time. The time derivative $\dot{V}(x,t)$ is then analyzed for all trajectories of the system in (\ref{e:nonlinear_eq}). The notion of a Lyapunov function comes from a energy perspective in which energy in a purely dissipative system is always positive and the time derivative is non-positive. It can be noted that even though the results of this paper rely on Lyapunov functions that are autonomous (i.e., $V(x,t)=V(x)$), some of what will be provided is additionally applicable to non-autonomous systems.

The following theorem establishes uniform asymptotic stability of the nonlinear dynamical system in (\ref{e:nonlinear_eq}), with proof available in \cite{Kalman_1960}.
\begin{theorem}[Lyapunov's Direct Method]\label{th:Lyap_Direct}
    The equilibrium state of (\ref{e:nonlinear_eq}) is uniformly asymptotically stable in the large if a scalar function $V(x,t)$ with continuous first partial derivatives with respect to $x$ and $t$ exists such that $V(0,t)=0$ and if the following conditions are satisfied:
    \begin{enumerate}
        \item $V(x,t)$ is positive definite, i.e. there exists a continuous non-decreasing scalar function $\underline{\alpha}$ such that $\underline{\alpha}(0)=0$ and, for all $t$ and all $x\neq0$:
        \begin{equation*}
            0<\underline{\alpha}(\lVert x\rVert)\leq V(x,t)
        \end{equation*}
        \item There exists a continuous non-decreasing scalar function $\underline{\gamma}$ s.t. $\underline{\gamma}(0)=0$ and the derivative $\dot{V}$ of $V$ along all system directions is negative-definite; that is that $\dot{V}$ satisfies for all $t$:
        \begin{equation*}
            \dot{V}=\frac{\partial V}{\partial t}+(\nabla V)^Tf(x,t)\leq-\underline{\gamma}(\lVert x\rVert)<0,\quad \forall x\neq0
        \end{equation*}
        \item $V(x,t)$ is decreascent, that is, there exists a continuous non-decreasing scalar function $\underline{\beta}$, such that $\underline{\beta}(0)=0$ and for all $t$:
        \begin{equation*}
            V(x,t)\leq\underline{\beta}(\lVert x\rVert)
        \end{equation*}
        \item $V(x,t)$ is radially unbounded, that is:
        \begin{equation*}
            \lim_{\lVert x\rVert\rightarrow\infty}\underline{\alpha}(\lVert x\rVert)=\infty
        \end{equation*}
    \end{enumerate}
\end{theorem}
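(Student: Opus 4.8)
The plan is to decompose the conclusion into its two constituent pieces—uniform stability and uniform attractivity in the large—and to establish each by comparing the value of $V$ along a trajectory against the three comparison functions $\underline{\alpha},\underline{\beta},\underline{\gamma}$, while carefully tracking that every constant produced is independent of the initial time $t_0$. The single fact driving everything is that along any solution of (\ref{e:nonlinear_eq}), condition~2 gives $\dot V\le-\underline{\gamma}(\lVert x\rVert)<0$ for $x\neq0$, so $t\mapsto V(x(t),t)$ is strictly decreasing and in particular $V(x(t),t)\le V(x_0,t_0)$ for all $t\ge t_0$; this monotonicity is what converts a pointwise bound at $t_0$ into a bound holding for all time.

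First I would prove uniform stability. Given $\epsilon>0$, I use the positive-definite bound (condition~1), which ensures $\underline{\alpha}(\epsilon)>0$, together with continuity of $\underline{\beta}$ at $0$ (condition~3, $\underline{\beta}(0)=0$) to select $\delta(\epsilon)>0$ with $\underline{\beta}(\delta)<\underline{\alpha}(\epsilon)$; this choice depends only on $\epsilon$. Then for $\lVert x_0\rVert<\delta$ the chain $\underline{\alpha}(\lVert x(t)\rVert)\le V(x(t),t)\le V(x_0,t_0)\le\underline{\beta}(\lVert x_0\rVert)\le\underline{\beta}(\delta)<\underline{\alpha}(\epsilon)$, combined with monotonicity of $\underline{\alpha}$, forces $\lVert x(t)\rVert<\epsilon$ for all $t\ge t_0$, which is exactly uniform stability.

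Second, I would establish uniform boundedness in the large and then uniform attractivity, which is the technical heart. For any $R>0$ and $\lVert x_0\rVert\le R$, the non-increase of $V$ and conditions~1 and~3 give $\underline{\alpha}(\lVert x(t)\rVert)\le\underline{\beta}(R)$; here condition~4 (radial unboundedness) is essential, since it guarantees that $\{s:\underline{\alpha}(s)\le\underline{\beta}(R)\}$ is bounded, so every trajectory remains inside a ball of some radius $R'=R'(R)$ for all time—without it the sublevel set could be unbounded and the trajectory escape, and it is precisely this hypothesis that upgrades the result to hold \emph{in the large}. For attractivity, fix $R>0$ and tolerance $\epsilon>0$, let $\delta(\epsilon)$ be the radius from the stability step, and note that on the compact annulus $\delta\le\lVert x\rVert\le R'$ condition~2 yields $\dot V\le-\underline{\gamma}(\delta)=:-c<0$ with $c>0$ fixed. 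If the trajectory stayed in the annulus on all of $[t_0,t_0+T]$ with $T=\underline{\beta}(R)/c$, integrating would give $V(x(t_0+T),t_0+T)\le\underline{\beta}(R)-cT=0$, contradicting $V\ge\underline{\alpha}(\delta)>0$; hence it must enter $\{\lVert x\rVert<\delta\}$ by time $t_0+T$, after which uniform stability confines it to $\{\lVert x\rVert<\epsilon\}$. Since $T$ depends only on $R$ and $\epsilon$, this is uniform attractivity in the large, and together with uniform stability it delivers uniform asymptotic stability in the large.

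The main obstacle I anticipate is bookkeeping rather than any deep idea. Because $\underline{\alpha},\underline{\beta},\underline{\gamma}$ are assumed only continuous and non-decreasing (not strictly increasing), I cannot freely invert them, so each deduction of the form ``$\underline{\alpha}(\lVert x\rVert)<\underline{\alpha}(\epsilon)\Rightarrow\lVert x\rVert<\epsilon$'' must be justified by a monotonicity-plus-contradiction argument. Equally, I must verify at every step that the constants $\delta$, $R'$, $c$, and $T$ never depend on $t_0$: this $t_0$-independence is what the word \emph{uniform} encodes, and the decrescent bound of condition~3 exists precisely to supply a $t_0$-free upper estimate $V(x_0,t_0)\le\underline{\beta}(\lVert x_0\rVert)$ that makes all of the above constants uniform.
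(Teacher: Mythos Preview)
Your proposal is a correct, standard proof of Lyapunov's direct method, and the bookkeeping concerns you flag (non-strict monotonicity of the comparison functions, $t_0$-independence of all constants) are the right ones and are handled by exactly the contradiction arguments you describe. However, the paper does not actually prove this theorem: it merely states it as background and refers the reader to \cite{Kalman_1960} for a proof. So there is no in-paper proof to compare against; your argument is simply the classical one that the cited reference contains.
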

It should be noted that in general, all of the conditions of Theorem \ref{th:Lyap_Direct} may not hold, in particular the condition 2 of Theorem \ref{th:Lyap_Direct}, which requires $\dot{V}$ being negative-definite along all system directions may be difficult to satisfy. In particular, $\dot{V}<0$ may never hold for the entire state space of adaptive systems as the unknown parameter would have to show up in the expression for $\dot{V}<0$. The parameter being unknown would restrict this condition from holding. However, it is common that $\dot{V}\leq0$, that is, that the time derivative of $V$ is negative semi-definite. The following proposition is used throughout this paper:
\begin{prop}
    If $V(x,t)$ in Theorem \ref{th:Lyap_Direct} is positive definite (condition 1) and $\dot{V}(x,t)\leq0$ then the origin of (\ref{e:nonlinear_eq}) is stable; if in addition, condition 3 of Theorem \ref{th:Lyap_Direct} is satisfied, then uniform stability follows, $x$ is bounded for all time, and $V(x,t)$ is called a Lyapunov function.
\end{prop}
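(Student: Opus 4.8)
The plan is to give the standard $\varepsilon$--$\delta$ Lyapunov argument, exploiting the two-sided control that the hypotheses place on $V$ along solutions of (\ref{e:nonlinear_eq}): positive definiteness (condition~1 of Theorem~\ref{th:Lyap_Direct}) supplies a lower bound $\underline{\alpha}(\lVert x\rVert)\le V(x,t)$ that ``traps'' the state inside any sublevel set of $V$, while $\dot V\le 0$ guarantees that $V$ is non-increasing along every trajectory, so a solution can never leave the sublevel set in which it starts. Combining the two confines $\lVert x(t)\rVert$; the decrescent condition~3, which adds the upper bound $V(x,t)\le\underline{\beta}(\lVert x\rVert)$, is what lets the confinement radius be chosen independently of the initial time, upgrading stability to uniform stability.

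For stability, I would fix $t_0\ge 0$ and $\varepsilon>0$. Since $\underline{\alpha}$ is positive on positive arguments, $\underline{\alpha}(\varepsilon)>0$; using $V(0,t_0)=0$ together with continuity of $V(\cdot,t_0)$ at the origin, choose $\delta\in(0,\varepsilon)$ so that $\lVert x_0\rVert<\delta$ implies $V(x_0,t_0)<\underline{\alpha}(\varepsilon)$. For any such $x_0$, the corresponding solution satisfies $V(x(t),t)\le V(x_0,t_0)<\underline{\alpha}(\varepsilon)$ for all $t\ge t_0$ because $\dot V\le 0$. If $\lVert x(t)\rVert$ ever reached $\varepsilon$, then by continuity of $t\mapsto x(t)$ and $\lVert x(t_0)\rVert<\delta<\varepsilon$ there would be a first time $t_1$ with $\lVert x(t_1)\rVert=\varepsilon$, and monotonicity of $\underline{\alpha}$ would give $\underline{\alpha}(\varepsilon)=\underline{\alpha}(\lVert x(t_1)\rVert)\le V(x(t_1),t_1)<\underline{\alpha}(\varepsilon)$, a contradiction. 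Hence $\lVert x(t)\rVert<\varepsilon$ for all $t\ge t_0$, which is stability of the origin, and in particular every trajectory launched from $\lVert x_0\rVert<\delta$ is bounded on $[t_0,\infty)$.

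Under the decrescent condition the only change is the choice of $\delta$: since $\underline{\beta}$ is continuous and non-decreasing with $\underline{\beta}(0)=0$, pick $\delta\in(0,\varepsilon)$ with $\underline{\beta}(\delta)<\underline{\alpha}(\varepsilon)$. Then for \emph{every} $t_0\ge 0$ and every $\lVert x_0\rVert<\delta$ we have $V(x_0,t_0)\le\underline{\beta}(\lVert x_0\rVert)\le\underline{\beta}(\delta)<\underline{\alpha}(\varepsilon)$, and the argument of the previous paragraph applies verbatim, now with $\delta$ depending on $\varepsilon$ alone. This is uniform stability; boundedness of $x$ on all of $[t_0,\infty)$ follows as before (and holds for arbitrary initial conditions as soon as $\underline{\alpha}$ is radially unbounded), and calling $V$ a Lyapunov function is then just the stated convention.

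The argument is elementary; the one place that needs care is the ``first exit time'' step, i.e. the assertion that a continuous trajectory cannot pass from the interior of the $\varepsilon$-ball to its exterior without meeting the sphere $\lVert x\rVert=\varepsilon$ --- this uses continuity of solutions of (\ref{e:nonlinear_eq}) and the intermediate value theorem --- together with, more pedantically, extracting from the monotone comparison functions $\underline{\alpha},\underline{\beta}$ exactly the inequalities needed (positivity of $\underline{\alpha}(\varepsilon)$, smallness of $\underline{\beta}(\delta)$). Neither point is substantive: the content of the proposition is precisely the observation that $\dot V\le 0$ pins trajectories inside sublevel sets of $V$, while condition~1 (resp. condition~1 plus condition~3) makes those sublevel sets small near the origin in a way that is (resp. uniformly) compatible with the initial data.
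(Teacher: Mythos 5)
Your proof is correct, and it is the standard argument: the paper itself gives no proof of this proposition---it is stated as a classical result adapted from the adaptive control literature (\cite{Narendra2005}, with Theorem~\ref{th:Lyap_Direct} cited to \cite{Kalman_1960})---so there is nothing to diverge from. Your $\varepsilon$--$\delta$ argument (sublevel-set trapping via $\dot V\le 0$, the first-exit-time contradiction using continuity of the trajectory and monotonicity of $\underline{\alpha}$, and then replacing the $t_0$-dependent choice of $\delta$ by one obtained from $\underline{\beta}(\delta)<\underline{\alpha}(\varepsilon)$ to get uniformity) is exactly how this is proved in the cited references. One small point in your favor: the proposition's clause ``$x$ is bounded for all time'' is stated loosely in the paper; as you note, it holds as written only for initial conditions in the $\delta$-ball, and for arbitrary initial conditions one needs bounded sublevel sets of $V$ (e.g.\ radial unboundedness, condition~4), which is satisfied in all of the paper's applications since the Lyapunov functions there are radially unbounded quadratics.
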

Linear time invariant (LTI) systems are frequently considered in this paper. The following theorem establishes stability for LTI systems and gives a connection to what is known as the Lyapunov equation:
\begin{theorem}[See \cite{Narendra2005}]
    The equilibrium state $x=0$ of the linear time invariant system
    \begin{equation}\label{e:Ax}
        \dot{x}=Ax
    \end{equation}
    is asymptotically stable if, and only if, given any symmetric positive-definite matrix $Q$, there exists a symmetric positive-definite matrix $P$, which is the unique solution of the set of $n(n+1)/2$ linear equations (called the Lyapunov equation):
    \begin{equation}\label{e:Lyapunov_Equation}
        A^TP+PA=-Q.
    \end{equation}
    Therefore, $V(x)=x^TPx$ is a Lyapunov function for equation (\ref{e:Ax}).
\end{theorem}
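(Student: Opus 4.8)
The plan is to prove the two implications of the equivalence separately and then read off the final ``therefore'' clause. For the ``if'' direction, suppose a symmetric positive-definite $P$ solving $A^TP+PA=-Q$ exists. I would take $V(x)=x^TPx$, which is positive definite and radially unbounded because $P\succ0$ (indeed $\lambda_{\min}(P)\lVert x\rVert^2\le V(x)\le\lambda_{\max}(P)\lVert x\rVert^2$), and compute along solutions of (\ref{e:Ax}) that $\dot{V}=\dot{x}^TPx+x^TP\dot{x}=x^T(A^TP+PA)x=-x^TQx$, which is negative definite because $Q\succ0$. Invoking Theorem \ref{th:Lyap_Direct} with these $V$ and $\dot V$ then yields uniform asymptotic stability in the large, and in particular asymptotic stability of $x=0$.

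For the ``only if'' direction, I would first use asymptotic stability of $\dot x=Ax$ to conclude that every eigenvalue of $A$ has strictly negative real part, so that $\lVert e^{At}\rVert\le c\,e^{-\alpha t}$ for some $c,\alpha>0$. Given a symmetric positive-definite $Q$, I would then propose the candidate
\begin{equation*}
    P=\int_0^\infty e^{A^Tt}\,Q\,e^{At}\,dt,
\end{equation*}
and verify that (i) the integral converges absolutely by the exponential bound, (ii) $P=P^T$ since the integrand is symmetric, (iii) $P\succ0$ because $x^TPx=\int_0^\infty (e^{At}x)^TQ(e^{At}x)\,dt>0$ for $x\ne0$, and (iv) $P$ solves the Lyapunov equation via the telescoping identity $A^Te^{A^Tt}Qe^{At}+e^{A^Tt}Qe^{At}A=\frac{d}{dt}\big(e^{A^Tt}Qe^{At}\big)$, whence $A^TP+PA=\big[e^{A^Tt}Qe^{At}\big]_0^\infty=-Q$.

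To establish uniqueness, I would suppose $P_1,P_2$ both solve $A^TP+PA=-Q$, set $\Delta=P_1-P_2$ so that $A^T\Delta+\Delta A=0$, and observe that $\frac{d}{dt}\big(e^{A^Tt}\Delta e^{At}\big)=e^{A^Tt}(A^T\Delta+\Delta A)e^{At}=0$; hence $e^{A^Tt}\Delta e^{At}$ is constant in $t$, equal to $\Delta$ at $t=0$, while it tends to $0$ as $t\to\infty$ by the exponential bound, forcing $\Delta=0$. The closing claim is then immediate: for the $P$ just constructed, $V(x)=x^TPx$ is positive definite, decrescent, and radially unbounded, with $\dot V=-x^TQx\le0$, so the Proposition of Section \ref{ss:Stability_Lyapunov} identifies it as a Lyapunov function for (\ref{e:Ax}).

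The main obstacle I anticipate is the ``only if'' direction, specifically justifying the integral representation of $P$: one must know that asymptotic stability is equivalent to $A$ being Hurwitz and then extract a uniform exponential decay rate $\lVert e^{At}\rVert\le c\,e^{-\alpha t}$ (e.g.\ from the Jordan form, where any polynomial-in-$t$ factors are dominated by a slightly slower exponential), which is precisely what makes both the convergence of the integral and the uniqueness argument work. Everything else — the differentiation along trajectories, symmetry, and positive-definiteness — is routine.
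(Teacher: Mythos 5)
Your proposal is correct and is precisely the classical argument: the paper itself states this result without proof, citing the textbook \cite{Narendra2005}, and your two directions — the quadratic function $V(x)=x^TPx$ with $\dot V=-x^TQx$ for sufficiency, and the integral construction $P=\int_0^\infty e^{A^Tt}Qe^{At}\,dt$ together with the constancy-of-$e^{A^Tt}\Delta e^{At}$ argument for existence and uniqueness — are exactly the standard proof found in that reference. No gaps; the only point you rightly flag (asymptotic stability of the LTI system implying a uniform exponential bound on $e^{At}$) is handled adequately by the Hurwitz/Jordan-form remark.
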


\subsection{Stability analysis of the first order time-varying regression algorithm}\label{ss:Stability_TV_Reg}

\begin{proof}[Proof of stability of the first order update in (\ref{e:update_GF}) for the regression error model in (\ref{e:error1})]
~\\
Consider the following Lyapunov function candidate:
\begin{equation}\label{e:Lyap_error1}
V(\tilde{\theta}(t))=\frac{1}{2\gamma}\tilde{\theta}^T(t)\tilde{\theta}(t)
\end{equation}
which is a non-negative scalar quantity. The time derivative of this Lyapunov function candidate is:
\begin{equation*}
\dot{V}(\tilde{\theta}(t))=\frac{1}{\gamma}\tilde{\theta}^T(t)\dot{\theta}(t).
\end{equation*}
Employing the equation for output error (\ref{e:error1}), as well as the first order algorithm (\ref{e:update_GF}), the time derivative of the Lyapunov function may be expressed as:
\begin{equation*}
\dot{V}(\tilde{\theta}(t))=-e_y^2(t)\leq 0.
\end{equation*}
From this, it can be concluded that $V(\tilde{\theta})$ is a Lyapunov function and $\tilde{\theta}\in\mathcal{L}_{\infty}$. By integrating $\dot{V}$ from $t_0$ to $\infty$: $\int_{t_0}^{\infty}e_y^2(t)dt=-\int_{t_0}^{\infty}\dot{V}dt=V(\tilde{\theta}(t_0))-V(\tilde{\theta}(\infty))<\infty$, thus $e_y\in\mathcal{L}_2$. If in addition $\phi\in\mathcal{L}_{\infty}$ (the magnitude of the features are bounded), then from equation (\ref{e:error1}) it can be seen that $e_y\in\mathcal{L}_2\cap\mathcal{L}_{\infty}$ and from equation (\ref{e:error1}) and (\ref{e:update_GF}), $\dot{\tilde{\theta}}\in\mathcal{L}_{\infty}$. Also from (\ref{e:update_GF}), given that $e_y\in\mathcal{L}_2$, it can be seen that $\dot{\tilde{\theta}}\in\mathcal{L}_2\cap\mathcal{L}_{\infty}$. If the additional assumption is made that $\dot{\phi}\in\mathcal{L}_{\infty}$ (the time derivative of the features are bounded), then from (\ref{e:error_model1_error}), $\dot{e}_y\in\mathcal{L}_{\infty}$. Additionally from (\ref{e:update_GF}), it can then be seen that $\ddot{\tilde{\theta}}\in\mathcal{L}_{\infty}$. Then from Corollary \ref{c:Barbalat_Corollary} in Appendix \ref{s:Definitions}:
\begin{equation*}
    \lim_{t\rightarrow\infty}e_y(t)=0\quad\text{and}\quad\lim_{t\rightarrow\infty}\dot{\tilde{\theta}}(t)=0
\end{equation*}
which is to say that the estimation error goes to zero as time goes to infinity, and the parameter estimate reaches a constant steady state value.
\end{proof}
For the parameter estimation error $\tilde{\theta}\rightarrow0$, persistence of excitation is needed (see Appendix \ref{s:Definitions}, Definition \ref{d:PE}). This condition is similar in machine learning problems, where the objective function is defined based on a estimation error, but parameter convergence is not guaranteed without sufficient richness of data.

\subsection{Stability analysis of the first order adaptive control and identification algorithm}\label{ss:Stability_MRAC}

\begin{proof}[Proof of stability of the first order update in (\ref{e:MRAC}) for the adaptive control error model in (\ref{e:error_model_2})]
~\\
Consider the following Lyapunov function candidate:
\begin{equation}
    V(e(t),\tilde{\theta}(t))=e^T(t)Pe(t)+\frac{1}{\gamma}\tilde{\theta}^T(t)\tilde{\theta}(t).
\end{equation}
The time derivative of the Lyapunov function candidate may be expressed as:
\begin{equation*}
    \dot{V}(e(t),\tilde{\theta}(t))=2e^T(t)P\dot{e}(t)+\frac{2}{\gamma}\tilde{\theta}^T(t)\dot{\theta}(t).
\end{equation*}
Employing the Lyapunov equation (\ref{e:Lyapunov_Equation}), the equation for model tracking error model (\ref{e:error_model_2}), as well as the first order algorithm (\ref{e:MRAC}), the time derivative of the Lyapunov function may be expressed as:
\begin{equation*}
    \dot{V}(e(t),\tilde{\theta}(t))=-e^T(t)Qe(t)\leq0.
\end{equation*}
Thus it can be concluded that $V(e(t),\tilde{\theta}(t))$ is a Lyapunov function with $e\in\mathcal{L}_{\infty}$ and $\tilde{\theta}\in\mathcal{L}_{\infty}$. By integrating $\dot{V}$ from $t_0$ to $\infty$: $\int_{t_0}^{\infty}e^T(t)Qe(t)dt=-\int_{t_0}^{\infty}\dot{V}dt=V(t_0)-V(\infty)<\infty$, thus $e\in\mathcal{L}_2$. If in addition $\phi\in\mathcal{L}_{\infty}$\footnote{As is common in adaptive control, $\phi=x$. It was proved that $e\in\mathcal{L}_{\infty}$, with $\hat{x}\in\mathcal{L}_{\infty}$ by design of a suitable input $u$. Thus with $x=\hat{x}-e$, $\phi=x$ is bounded by construction and thus this is not a restrictive assumption.}, then from equation (\ref{e:error_model_2}) $\dot{e}\in\mathcal{L}_{\infty}$ and from Corollary \ref{c:Barbalat_Corollary} in Appendix \ref{s:Definitions}:
\begin{equation}
    \lim_{t\rightarrow\infty}e(t)=0
\end{equation}
which is to say that the model tracking error goes to zero as time goes to infinity.
\end{proof}
Once again, for the parameter estimation error $\tilde{\theta}\rightarrow0$, persistence of excitation of the regressor of the system is needed (see Appendix \ref{s:Definitions}, Definition \ref{d:PE}). It can be noted that compared to the stability analysis for time-varying regression in Appendix \ref{ss:Stability_TV_Reg}, $\lim_{t\rightarrow\infty}e(t)=0$ when $\phi\in\mathcal{L}_{\infty}$ without the additional requirement that $\dot{\phi}\in\mathcal{L}_{\infty}$.

\subsection{Stability using the Lyapunov function in \cite{Wibisono_2016} for time-varying regression}\label{ss:Stability_Wibisono}

The candidate Lyapunov function proposed for stability in \cite{Wibisono_2016} Equation 8 is restated as:
\begin{equation*}
    V=D_h(\theta^*,\theta+\text{e}^{-\bar{\alpha}_t}\dot{\theta})+\text{e}^{\bar{\beta}_t}(L(\theta)-L(\theta^*))
\end{equation*}
where the Bregman divergence ($D_h(y,x)=h(y)-h(x)-\left<\nabla h(x),y-x\right>$) may be expanded with the same squared Euclidean norm ($h(x)=\frac{1}{2}\lVert x\rVert^2$) and squared loss ($L=\frac{1}{2}e_y^2$) considered in Section \ref{ss:Accelerated_Error1} as:
\begin{equation*}
    V=\frac{1}{2}\left\lVert\tilde{\theta}+\frac{1}{\beta(1+\mu\phi^T\phi)}\dot{\theta}\right\rVert^2+\frac{\gamma}{\beta(1+\mu\phi^T\phi)}\frac{1}{2}e_y^2.
\end{equation*}
Evaluating the time derivative of this candidate Lyapunov function using the time-varying regression error model (\ref{e:error1}), its time derivative (\ref{e:error_model1_error}) and higher order algorithm (\ref{e:Accelerated_GF_2nd}):
\begin{equation*}
    \dot{V}=-\gamma e_y^2\left(1+\frac{\mu\phi^T{\color{red}\dot{\phi}}}{\beta(1+\mu\phi^T\phi)^2}\right)+\frac{\gamma}{\beta(1+\mu\phi^T\phi)}e_y\tilde{\theta}^T{\color{red}\dot{\phi}}
\end{equation*}
which can be seen to be sign indeterminate. There can exist time derivatives of the feature ${\color{red}\dot{\phi}}$ for which $\dot{V}$ is positive and thus global stability cannot be established for arbitrary feature time variations. It can be noted that if the feature is constant, as is assumed implicitly by \cite{Wibisono_2016} (i.e., ${\color{red}\dot{\phi}}=0$), then stability can be established.

\subsection{Stability analysis of the higher order time-varying regression algorithm in Theorem \ref{th:Stability_Error1}}\label{ss:Stability_Error1}

\begin{proof}[Proof of Theorem \ref{th:Stability_Error1}]
It can be noted that the Lyapunov function proposed in \cite{Wibisono_2016} cannot be used to demonstrate stability of the accelerated algorithm in (\ref{e:Accelerated_GF}), as shown in Appendix \ref{ss:Stability_Wibisono}. To show stability for the accelerated update law (\ref{e:Accelerated_GF}) for time-varying regression (\ref{e:error1}), consider the following candidate Lyapunov function inspired by the higher order tuner approach in \cite{Evesque_2003}:
\begin{equation}\label{e:V_Accelerated_GF}
V=\frac{1}{\gamma}\lVert\vartheta-\theta^*\rVert^2+\frac{1}{\gamma}\lVert\theta-\vartheta\rVert^2
\end{equation}
which is a non-negative scalar quantity which represents squared error present in the algorithm. Choosing the normalization parameter\footnote{\label{fn:WLOG}Can be chosen without loss of generality.} in (\ref{e:N_t}) as $\mu=2\gamma/\beta$ and using equations (\ref{e:error1}) and (\ref{e:Accelerated_GF}), the time derivative of the candidate Lyapunov function in (\ref{e:V_Accelerated_GF}) may be bounded as:
\begin{equation*}
    \dot{V}\leq-\frac{2\beta}{\gamma}\lVert\theta-\vartheta\rVert^2-\lVert e_y\rVert^2-\left[\lVert e_y\rVert-2\lVert\theta-\vartheta\rVert\lVert\phi\rVert\right]^2
\end{equation*}
Thus it can be concluded that $V$ is a Lyapunov function with $(\vartheta-\theta^*)\in\mathcal{L}_{\infty}$ and $(\theta-\vartheta)\in\mathcal{L}_{\infty}$. By integrating $\dot{V}$ from $t_0$ to $\infty$: $\int_{t_0}^{\infty}\lVert e_y\rVert^2dt\leq-\int_{t_0}^{\infty}\dot{V}dt=V(t_0)-V(\infty)<\infty$, thus $e_y\in\mathcal{L}_2$. Likewise, $\int_{t_0}^{\infty}\frac{2\beta}{\gamma}\lVert\theta-\vartheta\rVert^2dt\leq-\int_{t_0}^{\infty}\dot{V}dt=V(t_0)-V(\infty)<\infty$, thus $(\theta-\vartheta)\in\mathcal{L}_2\cap\mathcal{L}_{\infty}$. Furthermore:
\begin{equation*}
    \lVert\theta-\vartheta\rVert^2_{\mathcal{L}_2}\leq\frac{\gamma V(t_0)}{2\beta}
\end{equation*}
Here the effect of the parameter $\beta$ is very apparent once again. As $\beta\rightarrow\infty$, $\lVert\theta-\vartheta\rVert^2_{\mathcal{L}_2}\rightarrow0$. If in addition $\phi\in\mathcal{L}_{\infty}$ (the magnitude of the features are bounded), then from equation (\ref{e:error1}) $e_y\in\mathcal{L}_2\cap\mathcal{L}_{\infty}$, and from equation (\ref{e:Accelerated_GF}) $\dot{\vartheta},\dot{\tilde{\theta}}\in\mathcal{L}_2\cap\mathcal{L}_{\infty}$. If the additional assumption is made that $\dot{\phi}\in\mathcal{L}_{\infty}$ (the time derivative of the features are bounded), then from equation (\ref{e:error_model1_error}), it can be seen that $\dot{e}_y\in\mathcal{L}_{\infty}$ and from equation (\ref{e:Accelerated_GF}) $\ddot{\vartheta},\ddot{\tilde{\theta}}\in\mathcal{L}_{\infty}$ and thus from Corollary \ref{c:Barbalat_Corollary} in Appendix \ref{s:Definitions}:
\begin{equation*}
    \lim_{t\rightarrow\infty}e_y(t)=0,~\lim_{t\rightarrow\infty}(\theta(t)-\vartheta(t))=0,~\lim_{t\rightarrow\infty}\dot{\vartheta}(t)=0,~\lim_{t\rightarrow\infty}\dot{\tilde{\theta}}(t)=0
\end{equation*}
which is to say that the estimation error goes to zero as time goes to infinity, and the parameter estimate and algorithm reach a steady state value.
\end{proof}
For the parameter estimation error to converge to zero ($\tilde{\theta}\rightarrow0$), persistence of excitation of the system regressor is needed (see Appendix \ref{s:Definitions}, Definition \ref{d:PE}).

\subsection{Stability analysis of the higher order adaptive control and identification algorithm in Theorem \ref{th:Stability_Error2}}\label{ss:Stability_Error2}

\begin{proof}[Proof of Theorem \ref{th:Stability_Error2}]
To show stability for the accelerated update law (\ref{e:Accelerated_MRAC}) for the dynamical error model in (\ref{e:error_model_2}), consider the following candidate Lyapunov function inspired by the higher order tuner approach for adaptive control in \cite{Evesque_2003}:
\begin{equation}\label{e:V_Accelerated_MRAC}
V=\frac{1}{\gamma}\lVert\vartheta-\theta^*\rVert^2+\frac{1}{\gamma}\lVert\theta-\vartheta\rVert^2+e^TPe
\end{equation}
which can be seen to be (\ref{e:V_Accelerated_GF}) with an additional term corresponding to the model tracking error. Choosing the normalization parameter\footnoteref{fn:WLOG} in (\ref{e:N_t}) as $\mu=2\gamma\lVert Pb\rVert^2/\beta$ and the symmetric positive definite matrix\footnoteref{fn:WLOG} in the Lyapunov equation ($A^TP+PA=-Q$) from before as $Q=2I$ and using equations (\ref{e:error_model_2}) and (\ref{e:Accelerated_MRAC}), the time derivative of the Lyapunov function in (\ref{e:V_Accelerated_MRAC}) may be bounded as:
\begin{equation*}
    \dot{V}\leq-\frac{2\beta}{\gamma}\lVert\theta-\vartheta\rVert^2-\lVert e\rVert^2-\left[\lVert e\rVert-2\lVert Pb\rVert\lVert\theta-\vartheta\rVert\lVert\phi\rVert\right]^2
\end{equation*}
Thus it can be concluded that $V$ is a Lyapunov function with $e\in\mathcal{L}_{\infty}$, $(\vartheta-\theta^*)\in\mathcal{L}_{\infty}$, and $(\theta-\vartheta)\in\mathcal{L}_{\infty}$. By integrating $\dot{V}$ from $t_0$ to $\infty$: $\int_{t_0}^{\infty}\lVert e\rVert^2dt\leq-\int_{t_0}^{\infty}\dot{V}dt=V(t_0)-V(\infty)<\infty$, thus $e\in\mathcal{L}_2\cap\mathcal{L}_{\infty}$. Likewise, $\int_{t_0}^{\infty}\frac{2\beta}{\gamma}\lVert\theta-\vartheta\rVert^2dt\leq-\int_{t_0}^{\infty}\dot{V}dt=V(t_0)-V(\infty)<\infty$, thus $(\theta-\vartheta)\in\mathcal{L}_2\cap\mathcal{L}_{\infty}$. Furthermore, it can be concluded that again: $\lVert\theta-\vartheta\rVert^2_{\mathcal{L}_2}\leq\frac{\gamma V(t_0)}{2\beta}$, where as $\beta\rightarrow\infty$: $\lVert\theta-\vartheta\rVert^2_{\mathcal{L}_2}\rightarrow0$. If in addition $\phi\in\mathcal{L}_{\infty}$\footnote{As is common in adaptive control, $\phi=x$. It was proved that $e\in\mathcal{L}_{\infty}$, with $\hat{x}\in\mathcal{L}_{\infty}$ by design of a suitable input $u$. Thus with $x=\hat{x}-e$, $\phi=x$ is bounded by construction and thus this is not a restrictive assumption.}, then from equation (\ref{e:error_model_2}) $\dot{e}\in\mathcal{L}_{\infty}$, and thus from Corollary \ref{c:Barbalat_Corollary}:
\begin{equation*}
    \lim_{t\rightarrow\infty}e(t)=0
\end{equation*}
which is to say that the model tracking error goes to zero as time goes to infinity. It can be noted that compared to the stability analysis in Section \ref{ss:Stability_Error1}, $\lim_{t\rightarrow\infty}e(t)=0$ when $\phi\in\mathcal{L}_{\infty}$ without the additional requirement that $\dot{\phi}\in\mathcal{L}_{\infty}$. Also, from equation (\ref{e:Accelerated_MRAC}) $\dot{\vartheta},\dot{\tilde{\theta}}\in\mathcal{L}_2\cap\mathcal{L}_{\infty}$. If the additional assumption is made that $\dot{\phi}\in\mathcal{L}_{\infty}$\footnote{This is not a restrictive assumption in adaptive control with $\phi=x$, as $\dot{x}=\dot{\hat{x}}-\dot{e}$ is bounded by construction.}, then from equation (\ref{e:Accelerated_MRAC}) $\ddot{\vartheta},\ddot{\tilde{\theta}}\in\mathcal{L}_{\infty}$, and thus from Corollary \ref{c:Barbalat_Corollary}:
\begin{equation*}
    \lim_{t\rightarrow\infty}(\theta(t)-\vartheta(t))=0,~\lim_{t\rightarrow\infty}\dot{\vartheta}(t)=0,~\lim_{t\rightarrow\infty}\dot{\tilde{\theta}}(t)=0
\end{equation*}
which states that the parameter estimate and algorithm reach a steady state value.
\end{proof}
For the parameter estimation error $\tilde{\theta}\rightarrow0$, persistence of excitation of the regressor of the system is needed (see Appendix \ref{s:Definitions}, Definition \ref{d:PE}).

\subsection{Constant regret and Lyapunov stability}\label{ss:Regret}

The efficiency of an algorithm in online optimization in machine learning is often analyzed using the notion of ``regret'' in discrete time as
\begin{equation}\label{e:regret}
    \text{Regret}=\sum_{k=1}^T\mathcal{C}_k(\theta_k)-\min_{\theta\in\Theta}\sum_{k=1}^T\mathcal{C}_k(\theta)
\end{equation}
where $k\in\mathbb{N}$ is the time index and $\Theta$ is a compact convex set where the parameters reside. Regret corresponds to the sum of the time-varying convex costs $\mathcal{C}_k$ associated with the choice of the time-varying parameter estimate $\theta_k$, minus the cost associated with the best static parameter estimate choice in hindsight, over a time horizon of $T$ steps \cite{Zinkevich_2003,Hazan_2007,Hazan_2008,Shalev_Shwartz_2011,Hazan_2016}.
\begin{proof}[Proof of Corollaries \ref{c:Stability_Error1} and \ref{c:Stability_Error2}]
Suppose we consider squared output error (respectively squared model tracking error) cost, consistent with the squared loss employed in this paper: $\mathcal{C}_k=\lVert e_{y,k}\rVert^2$. A continuous time limit of (\ref{e:regret}) leads to an integral as
\begin{equation}\label{e:regret_continuous}
    \text{Regret}_\mathrm{continuous}:=\int_{t_0}^T \lVert e_y(\tau)\rVert^2 d\tau
\end{equation}
where for time varying regression $\mathcal{C}_k(\theta^*)=0$ as seen in equation (\ref{e:error1}), and an exponentially decaying term due to initial conditions may be present for the dynamical error model in (\ref{e:error_model_2}) \cite{Narendra2005}. Continuous regret can be connected to Lyapunov stability, given that $V(t)>0$ and $\dot{V}(t)\leq-\lVert e_y(t) \rVert^2\leq0$, $\dot{V}(t)\leq-\lVert e(t) \rVert^2\leq0$, as demonstrated in Appendices \ref{ss:Stability_Error1} and \ref{ss:Stability_Error2}. By integrating $\dot{V}$ from $t_0$ to $T$, we obtain
\begin{equation}\label{e:regret_AC}
    \int_{t_0}^{T}\lVert e_y(\tau)\rVert^2d\tau\leq-\int_{t_0}^{T}\dot{V}(\tau)d\tau=V(t_0)-V(T).
\end{equation}
where $e(\tau)$ may be employed in (\ref{e:regret_continuous}) and (\ref{e:regret_AC}) for the dynamical error model in (\ref{e:error_model_2}). Given that $\dot{V}(t)\leq0$, it can be seen that $V(t_0)-V(T)\leq V(t_0)=\mathcal{O}(1)$.
\end{proof}
A close connection can thus be seen between continuous regret in (\ref{e:regret_continuous}) and Lyapunov stability in (\ref{e:regret_AC}). As stated in the field of online optimization, it is desired to have regret grow sub-linearly with time, such that average regret, $(1/T)\text{Regret}$, goes to zero in the limit $T\rightarrow\infty$. Such an algorithm is stated to be an efficient algorithm \cite{Hazan_2016}. By employing Lyapunov stability theory from the field of adaptive control, we have shown convergence of output/state errors to zero for our algorithms with an integral which is akin to \emph{constant} regret upper bounded by $V(t_0)$ in (\ref{e:regret_AC}). Thus our regret bound does not increase as a function of time as is common in online machine learning approaches \cite{Zinkevich_2003,Hazan_2007,Hazan_2008,Shalev_Shwartz_2011,Hazan_2016}. Regret contains a sum of non-negative costs and is therefore a non-decreasing function of the time horizon $T$. Thus $O(1)$, constant regret attained by our algorithms is the best achievable regret, up to constants which do not vary with time.

\clearpage
\section{Implementation details and additional simulation plots}
\label{s:Experiment_MRAC_Implementation}

This section provides implementation details for the state feedback adaptive control simulation in Section \ref{ss:Error_2_Experiments}. The F-16 model used in this paper is from \cite{Stevens2003}.\footnote{Model downloaded from: \url{http://www.aem.umn.edu/~balas/darpa_sec/SEC.Software.html}.} A trim point for this nonlinear F-16 vehicle model was obtained at a straight and level flying condition at a velocity of $500$ ft/s with an altitude of $15,000$ ft. The model was linearized about this trim point in order to obtain linear dynamics for control design and simulation. The short period linearized longitudinal dynamics of the aircraft are considered in this paper, as is typical for inner loop flight control \cite{Lavretsky2013}. The longitudinal short period variables are:
\begin{equation*}
x_p=
\begin{bmatrix}
\alpha&q
\end{bmatrix}^T,
\qquad
u=\delta_e,
\qquad
z_p=q
\end{equation*}
where the longitudinal state $x_p$ is composed of the vehicle's angle of attack $\alpha$ (degrees) and pitch rate $q$ (degrees per second). The pitch rate is a regulated variable $z_p$. The elevator deflection $\delta_e$ (degrees) is an input to the dynamics. The linearized dynamics and input matrices are:
\begin{equation*}
    A_p=
    \begin{bmatrix}
    -0.6398&0.9378\\
    -1.5679&-0.8791
    \end{bmatrix},
    \qquad
    b_p=
    \begin{bmatrix}
    -0.0777\\
    -6.5121
    \end{bmatrix}
\end{equation*}
The goal is to design the control input $u$ so that $z_p$ tracks a bounded command $z_{cmd}$ with zero error. To ensure a zero tracking error, an integral error $x_e$ state is generated as:
\begin{equation*}
\label{e:integral_error}
\dot{x}_e(t)=z_p(t)-z_{cmd}(t).
\end{equation*}
where the integral error state in this paper represents the integral of the pitch rate command tracking error. The complete plant model augments the plant dynamics with the integral of the tracking error and is written as:
\begin{equation*}\label{e:full_plant}
\underbrace{
\begin{bmatrix}
\dot{x}_p(t)\\
\dot{x}_e(t)
\end{bmatrix}
}_{\dot{x}(t)}
=
\underbrace{
\begin{bmatrix}
A_p&0_{2\times1}\\
[0~1]&0\\
\end{bmatrix}
}_{A}
\underbrace{
\begin{bmatrix}
x_p(t)\\
x_e(t)
\end{bmatrix}
}_{x(t)}
+
\underbrace{
\begin{bmatrix}
b_p\\
0
\end{bmatrix}
}_{b}
u(t)
+
\underbrace{
\begin{bmatrix}
0_{2\times1}\\
-1\\
\end{bmatrix}
}_{b_z}
z_{cmd}(t)
\end{equation*}
This can be expressed more compactly as: $\dot{x}(t)=Ax(t)+bu(t)+b_zz_{cmd}(t)$, where $A\in\mathbb{R}^{3\times 3}$, $b\in\mathbb{R}^{3\times 1}$, $b_z\in\mathbb{R}^{3\times 1}$ are the known matrices provided above. A state feedback gain $\theta^*$ may be designed with linear quadratic regulator (LQR) methods in order to stabilize this system. The following cost matrices were employed to penalize the integral command tracking state and the control input:
\begin{equation*}
    Q_{LQR}=
    \begin{bmatrix}
    0&0&0\\
    0&0&0\\
    0&0&1
    \end{bmatrix},
    \qquad
    R_{LQR}=1
\end{equation*}
The Matlab command $\theta^*=lqr(A,b,Q_{LQR},R_{LQR})'$ resulted in the following gain:
\begin{equation*}
    \theta^*=
    \begin{bmatrix}
    0.1965 & -0.3835 & -1.0000
    \end{bmatrix}^T
\end{equation*}
A stable closed loop matrix $A_m$ may then be formulated as:
\begin{equation*}
    A_m\triangleq A-b\theta^{*T}
\end{equation*}
The plant model may then be expressed in a similar manner as Section \ref{ss:MRAC} with the closed loop matrix as:
\begin{equation*}
    \dot{x}(t)=A_mx(t)+b(u(t)+\theta^{*T}x(t))+b_zz_{zmd}(t)
\end{equation*}
A set of desired dynamics, known as the reference model may then be stated with the closed loop matrix as:
\begin{equation*}
    \label{e:Reference_Model}
    \dot{\hat{x}}(t)=A_m\hat{x}(t)+b\left(u+\theta^T(t)x(t)\right)+b_zz_{cmd}(t)
\end{equation*}
In order to track the reference model in an adaptive control formulation, the control input is set as:
\begin{equation*}
    u(t)=-\theta^T(t)x(t)
\end{equation*}
where the adaptive parameter $\theta$ may be adjusted according to the nominal MRAC (\ref{e:MRAC}) and higher order MRAC (\ref{e:Accelerated_MRAC}) update laws. The model tracking error may be stated as $e=\hat{x}-x$. The error model may then be stated as:
\begin{equation*}
    \dot{e}(t)=A_me(t)+b\tilde{\theta}^T(t)x(t)
\end{equation*}
where $\tilde{\theta}=\theta-\theta^*$, and can be seen to have a similar representation to the dynamical error model in equation (\ref{e:error_model_2}), with $\phi=x$.

\clearpage
\begin{figure}
    \centering
    \begin{subfigure}[b]{\textwidth}
        \centerline{
	    \includegraphics[trim={0.25cm 3.5cm 1.5cm 4.4cm},clip,width=0.2\textwidth]{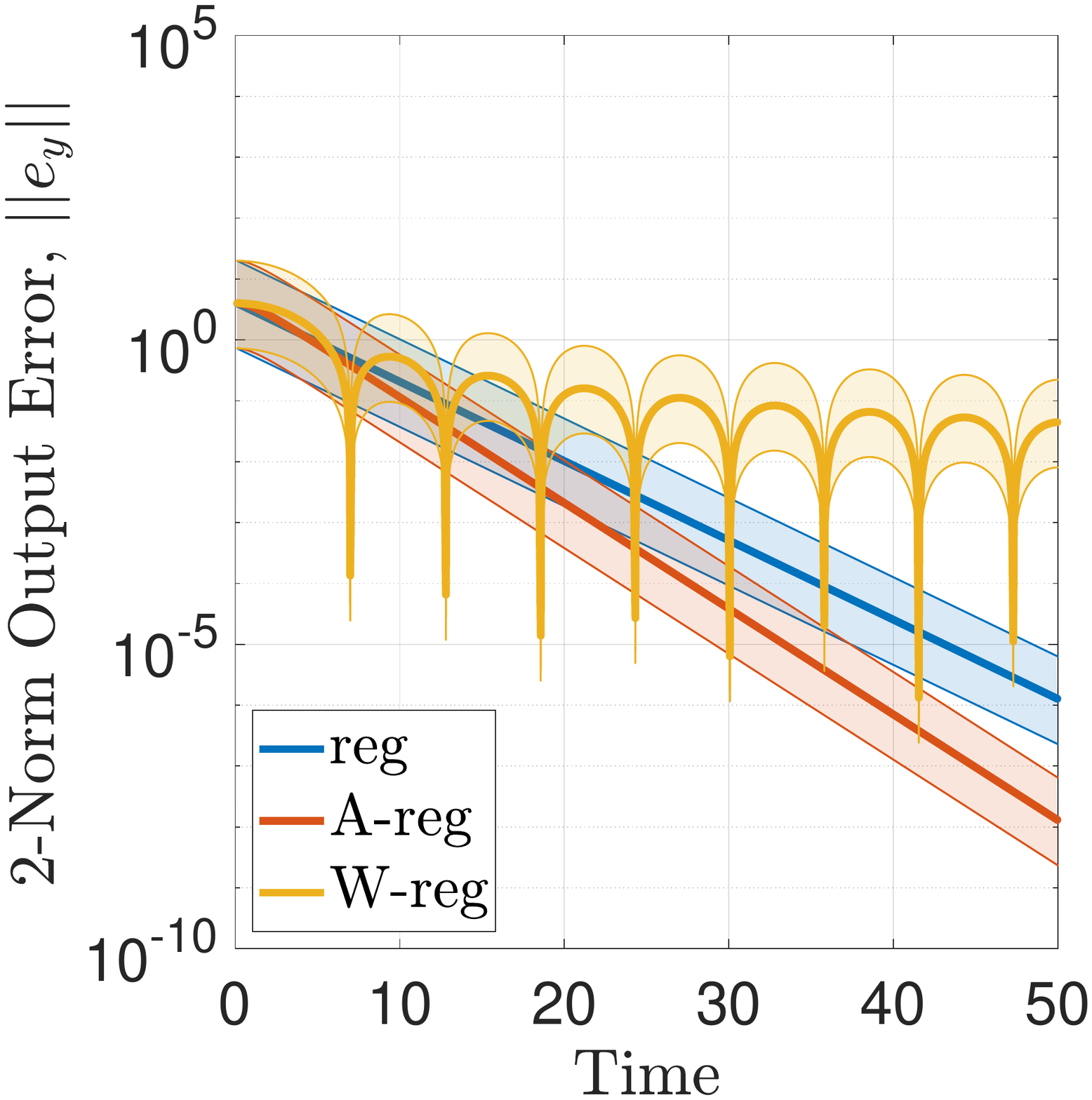}
	    \includegraphics[trim={0.25cm 3.5cm 1.5cm 4.4cm},clip,width=0.2\textwidth]{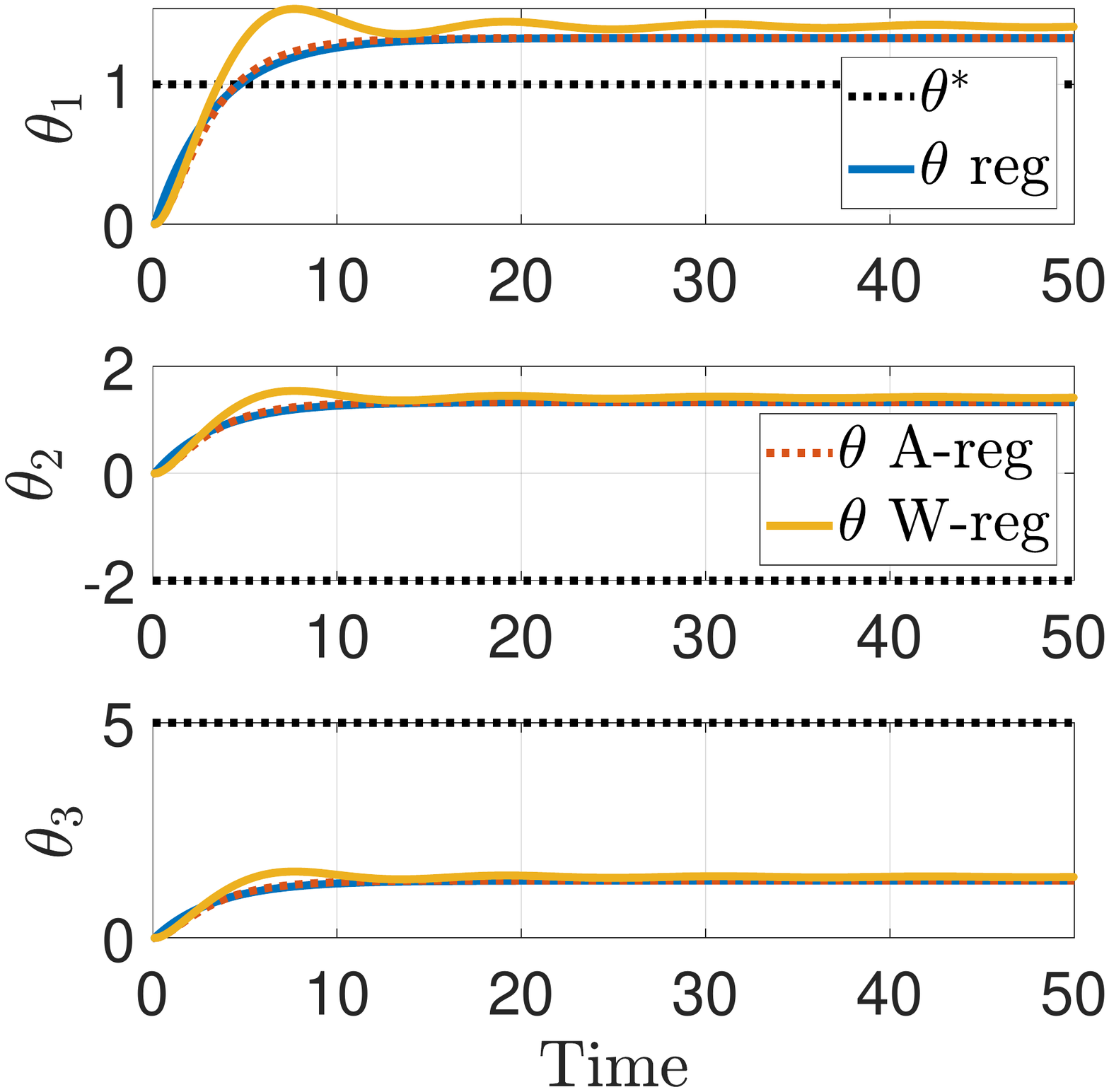}
	    \includegraphics[trim={0.25cm 3.5cm 1.5cm 4.4cm},clip,width=0.2\textwidth]{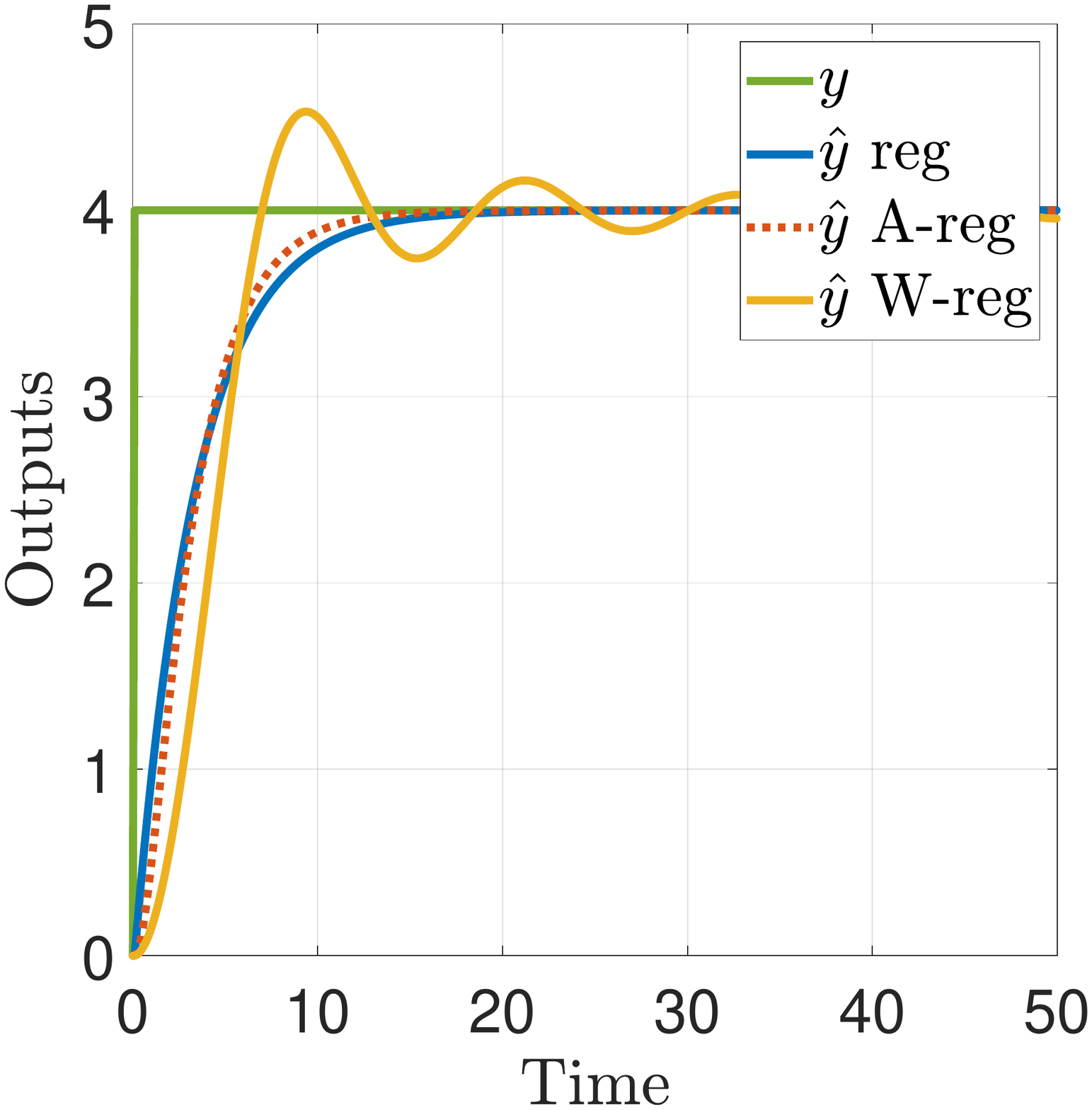}
	    \includegraphics[trim={0.25cm 3.5cm 1.5cm 4.4cm},clip,width=0.2\textwidth]{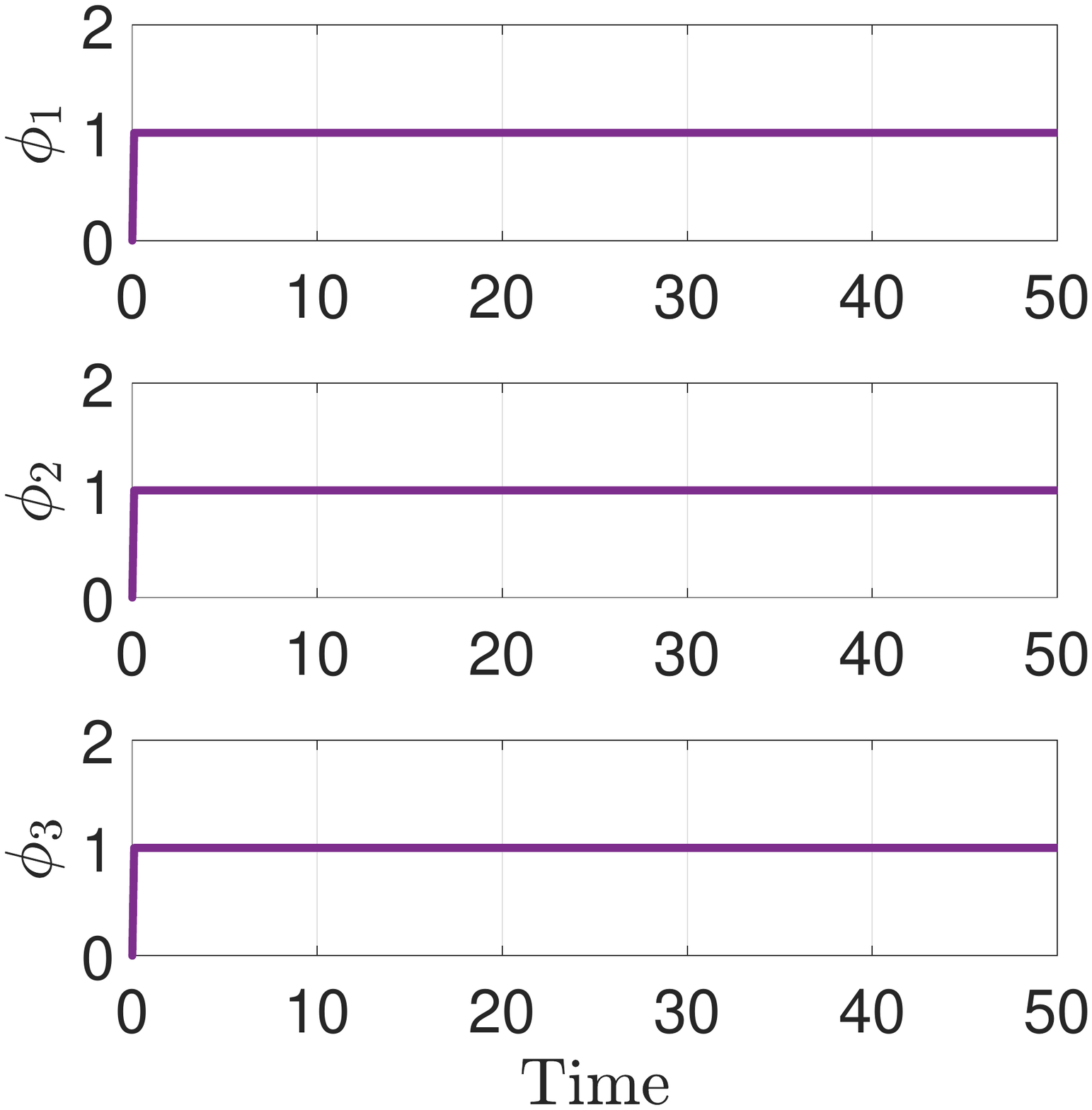}
	    }
	\caption{Time-varying regression: At time $t=0.1$, the feature vector steps to a constant value of $\phi=[1,~1,~1]^T$.}
	\label{f:Error1_Step_Response_Appendix}
    \end{subfigure}
    
    \begin{subfigure}[b]{\textwidth}
        \centerline{
	    \includegraphics[trim={0.25cm 3.5cm 1.5cm 4.4cm},clip,width=0.2\textwidth]{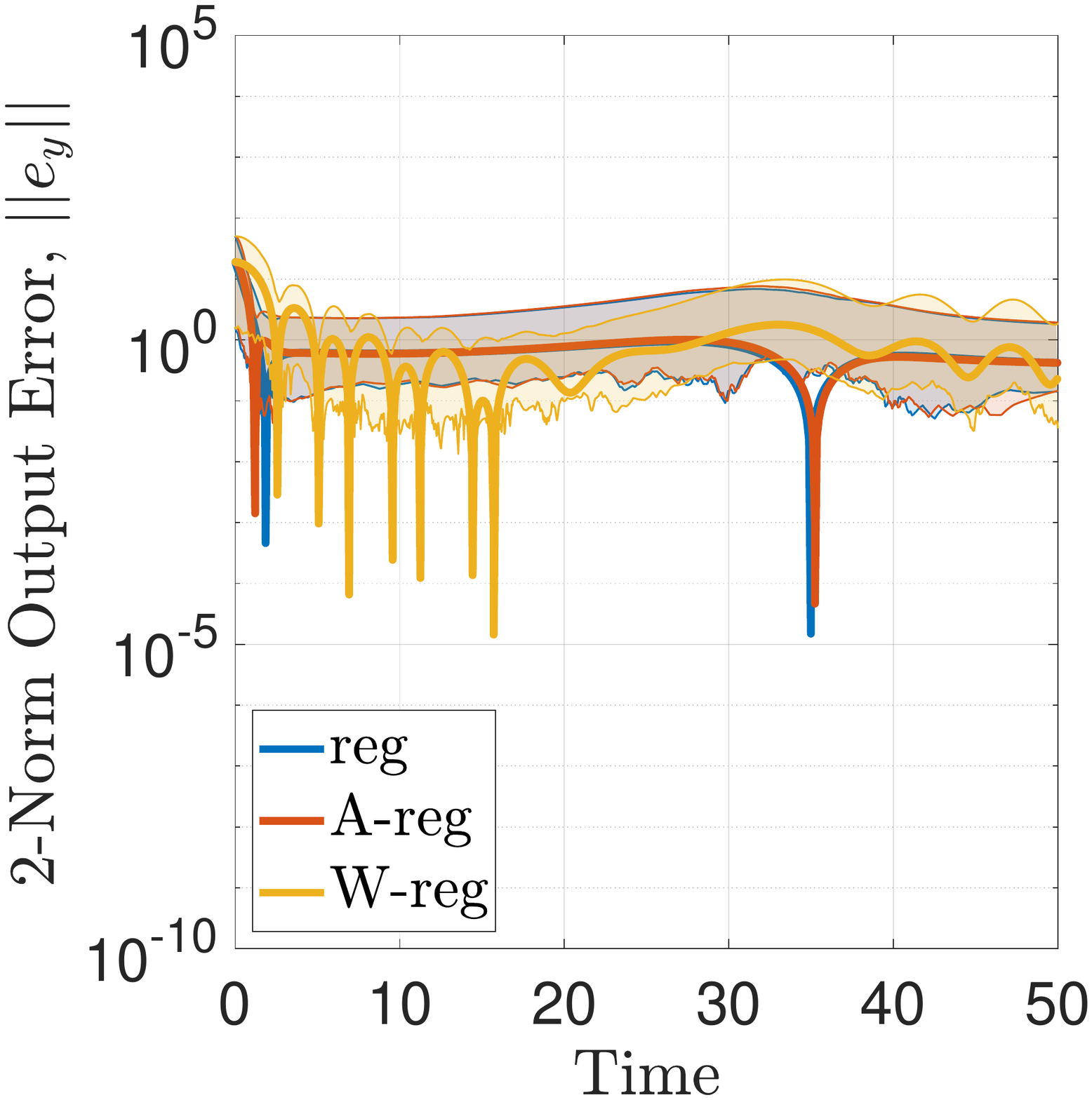}
	    \includegraphics[trim={0.25cm 3.5cm 1.5cm 4.4cm},clip,width=0.2\textwidth]{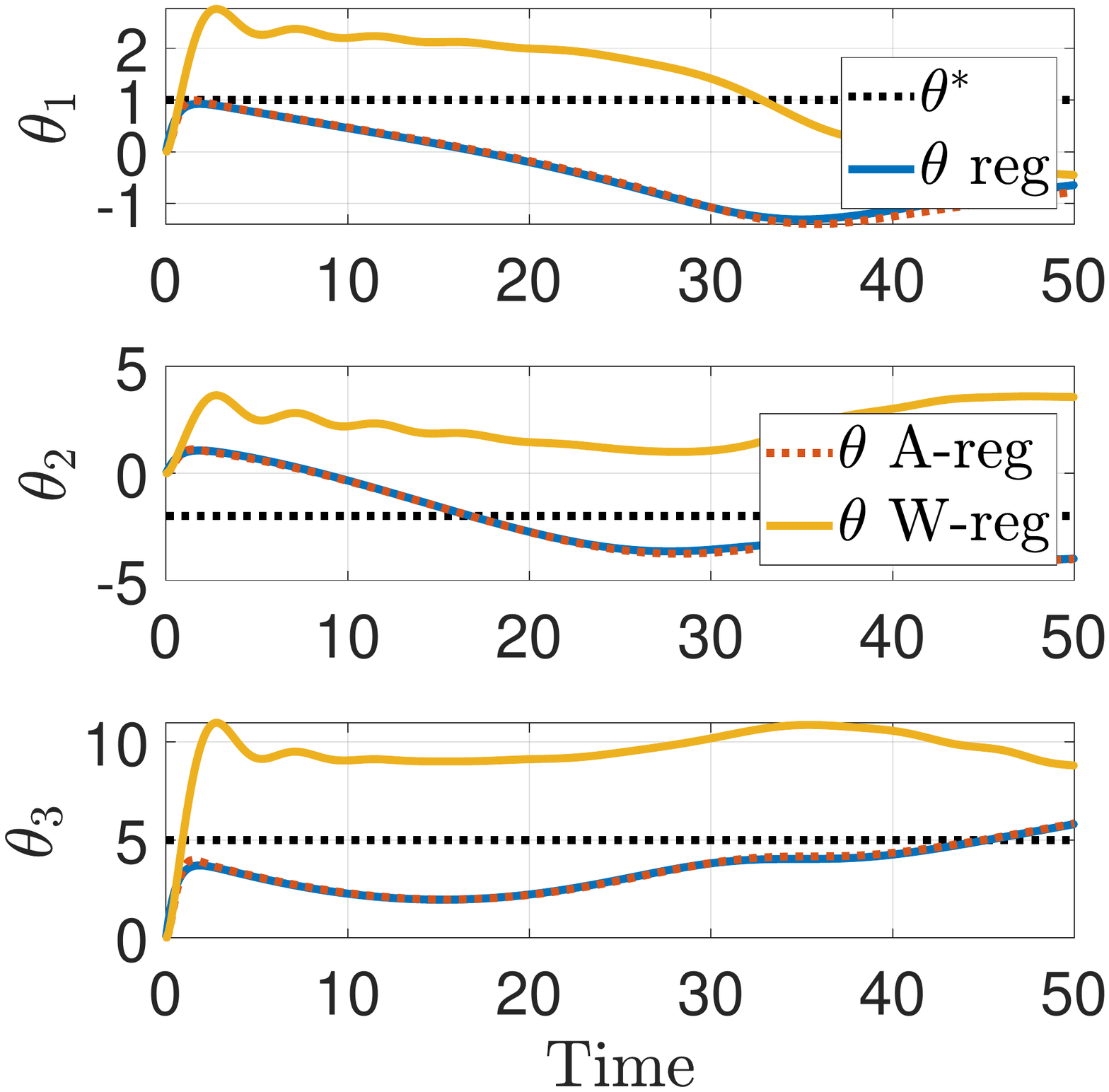}
	    \includegraphics[trim={0.25cm 3.5cm 1.5cm 4.4cm},clip,width=0.2\textwidth]{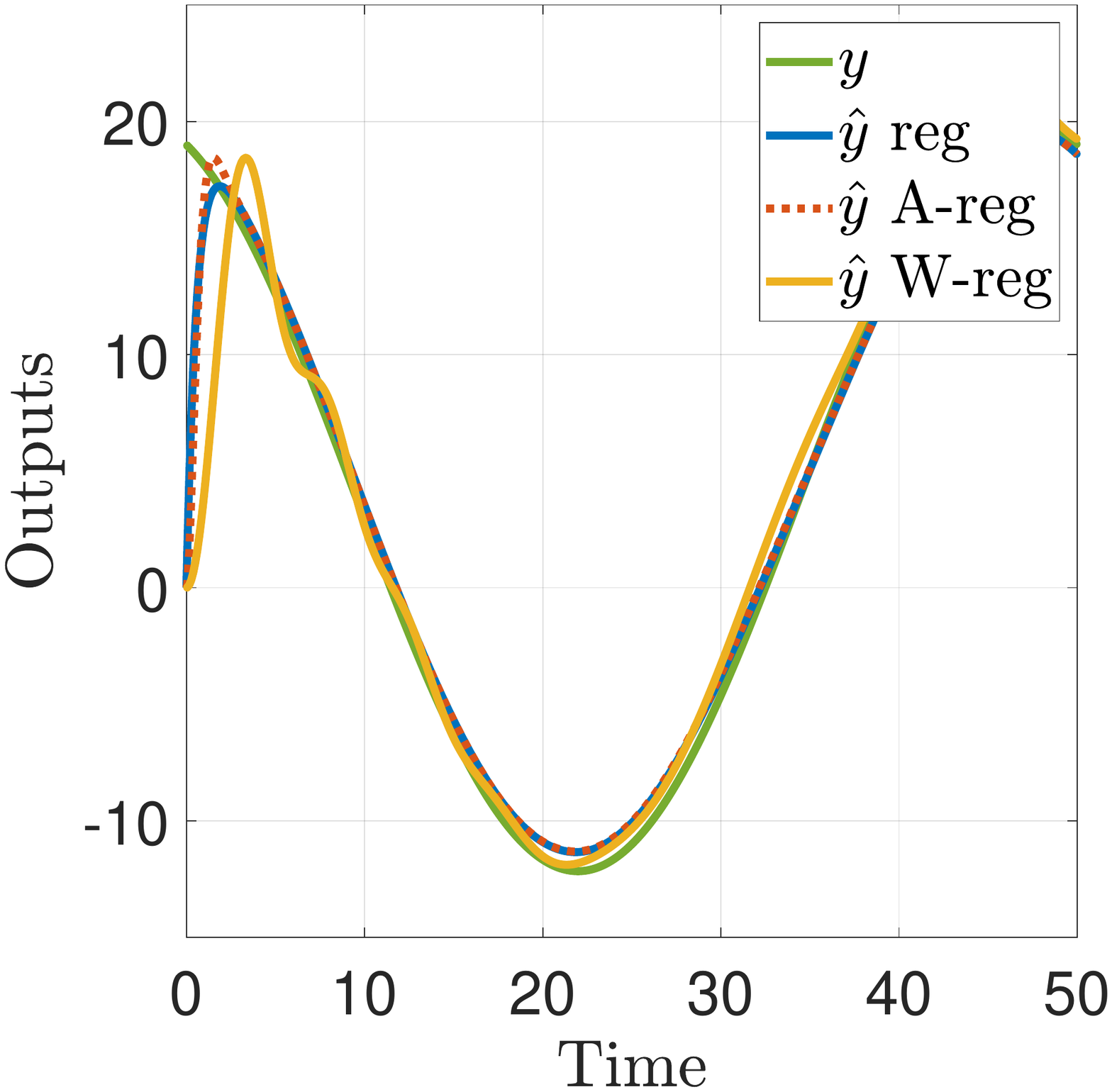}
	    \includegraphics[trim={0.25cm 3.5cm 1.5cm 4.4cm},clip,width=0.2\textwidth]{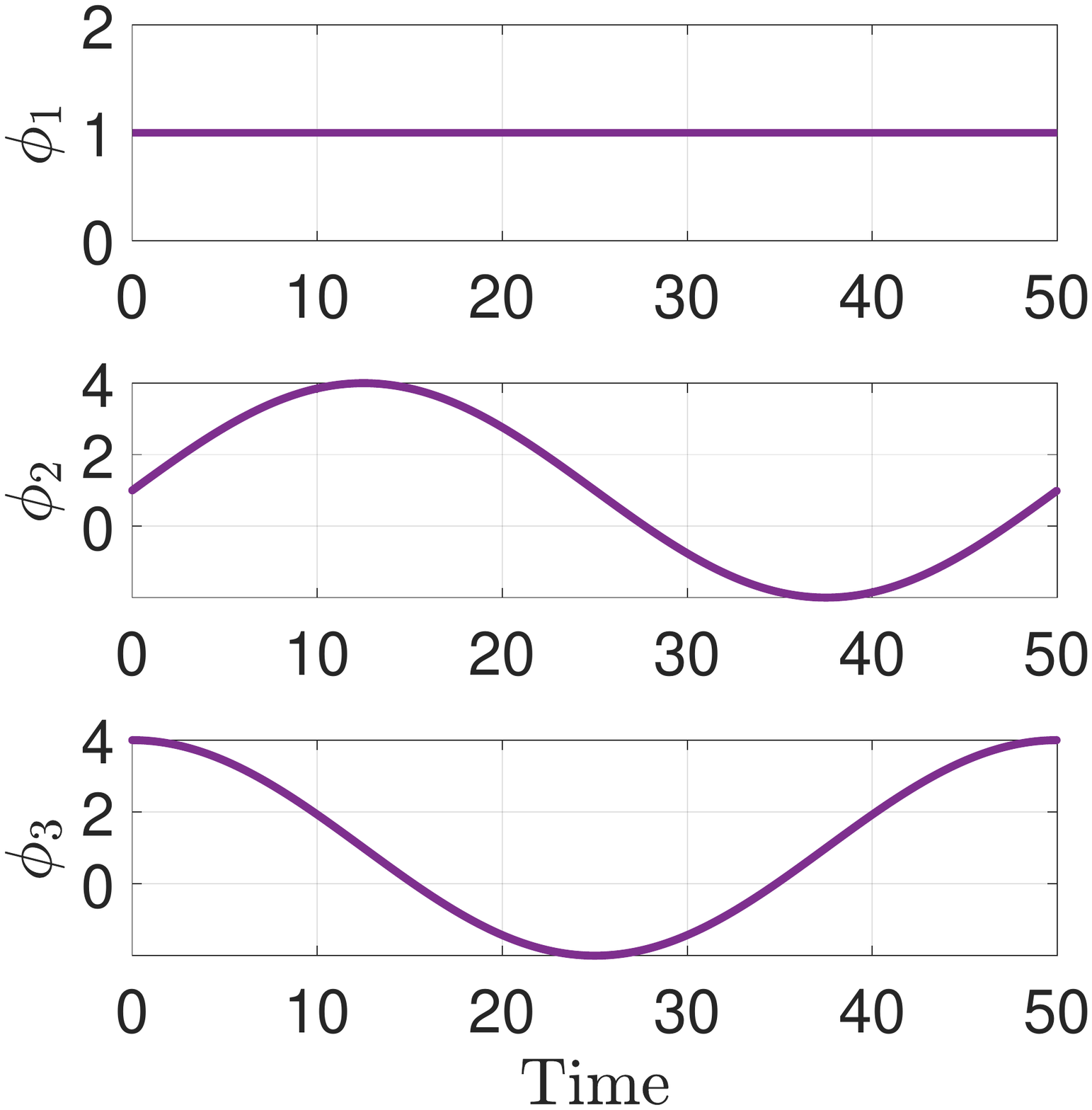}
	    }
	\caption{Time-varying regression: Response with $\phi=[1,~1+3\sin(\frac{2\pi}{50}t),~1+3\cos(\frac{2\pi}{50}t)]^T$.}
	\label{f:Error1_One_Period_Appendix}
    \end{subfigure}
    
    \begin{subfigure}[b]{\textwidth}
        \centerline{
	    \includegraphics[trim={0.25cm 3.5cm 1.5cm 4.4cm},clip,width=0.2\textwidth]{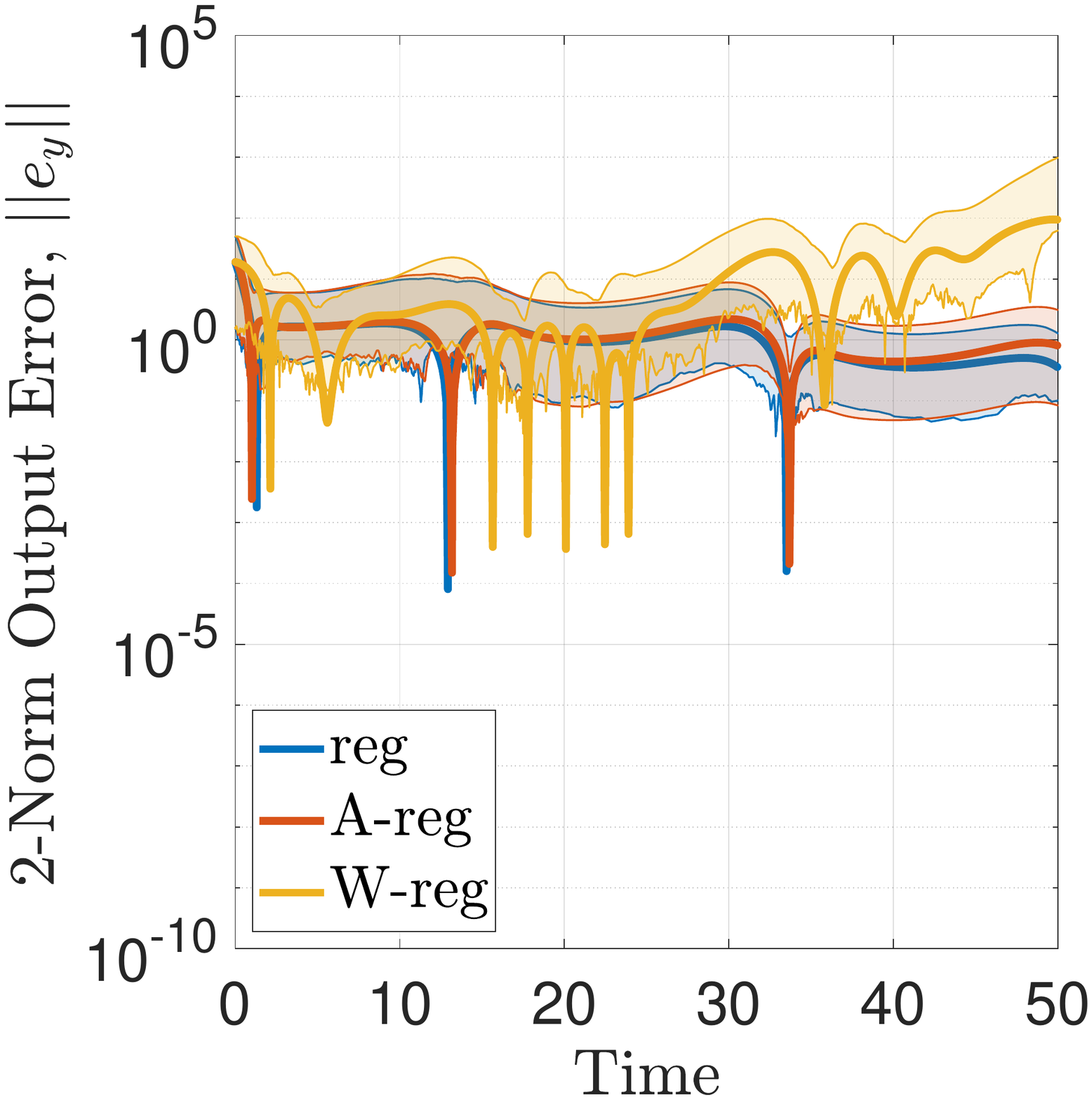}
	    \includegraphics[trim={0.25cm 3.5cm 1.5cm 4.4cm},clip,width=0.2\textwidth]{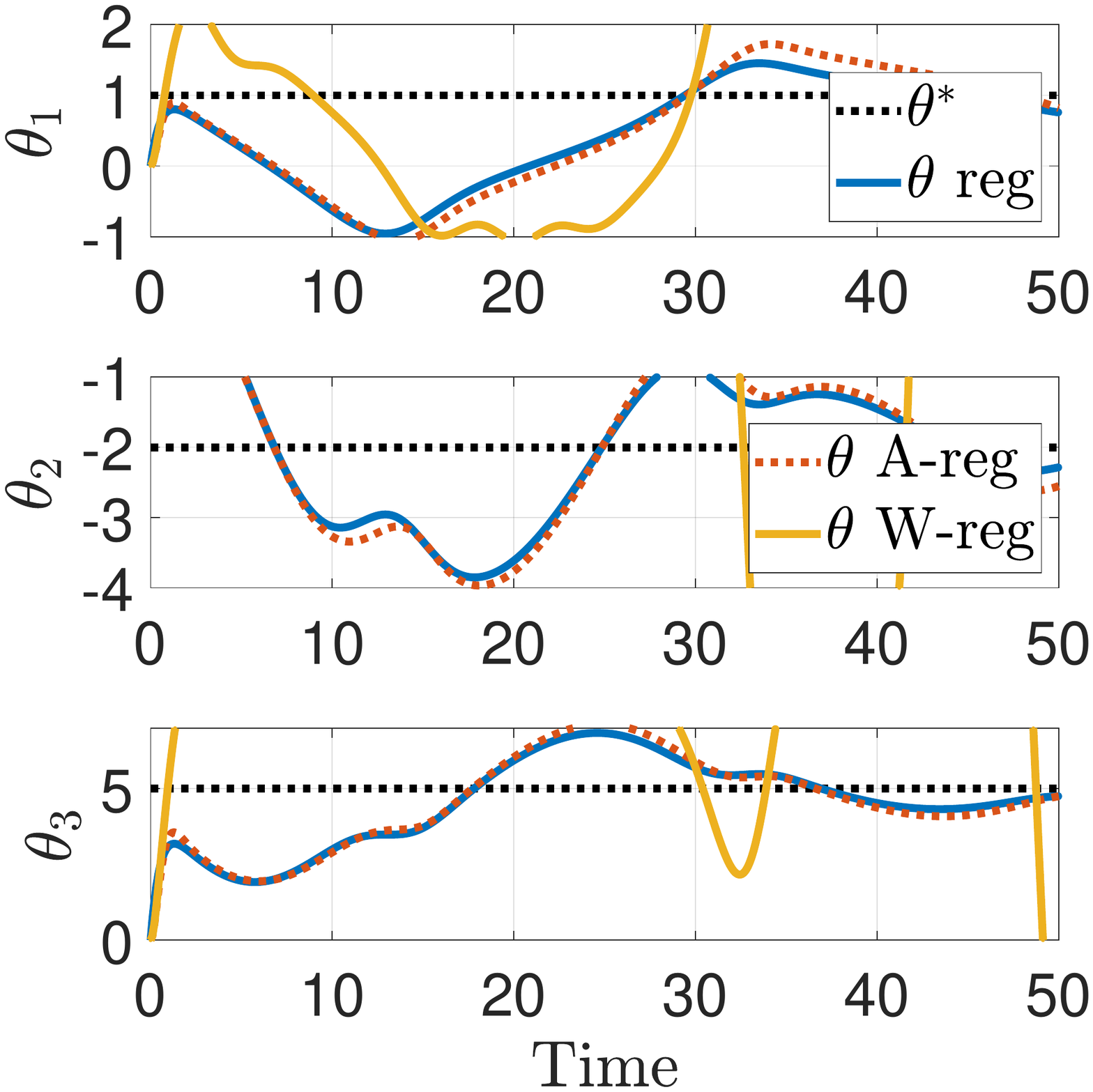}
	    \includegraphics[trim={0.25cm 3.5cm 1.5cm 4.4cm},clip,width=0.2\textwidth]{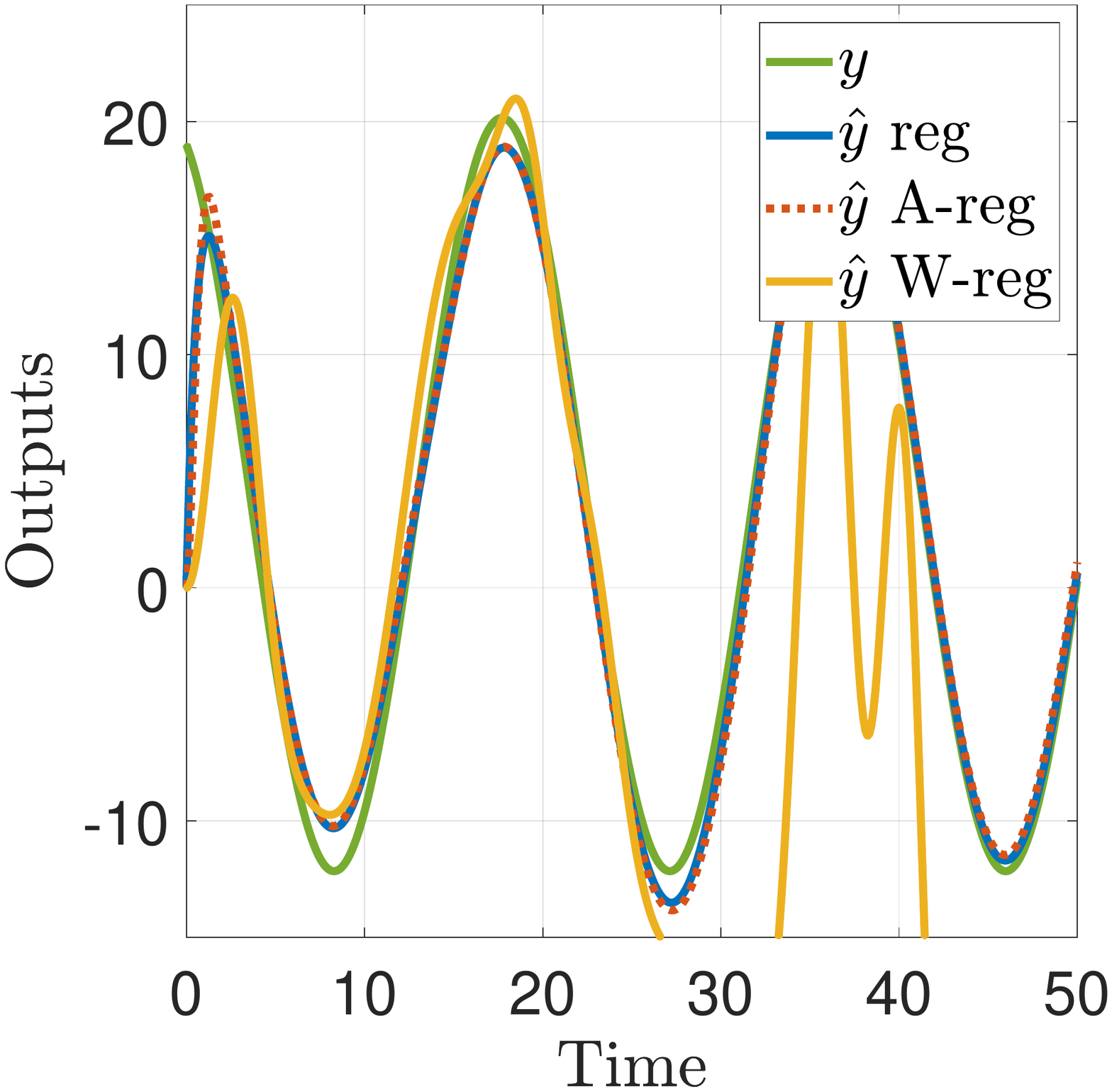}
	    \includegraphics[trim={0.25cm 3.5cm 1.5cm 4.4cm},clip,width=0.2\textwidth]{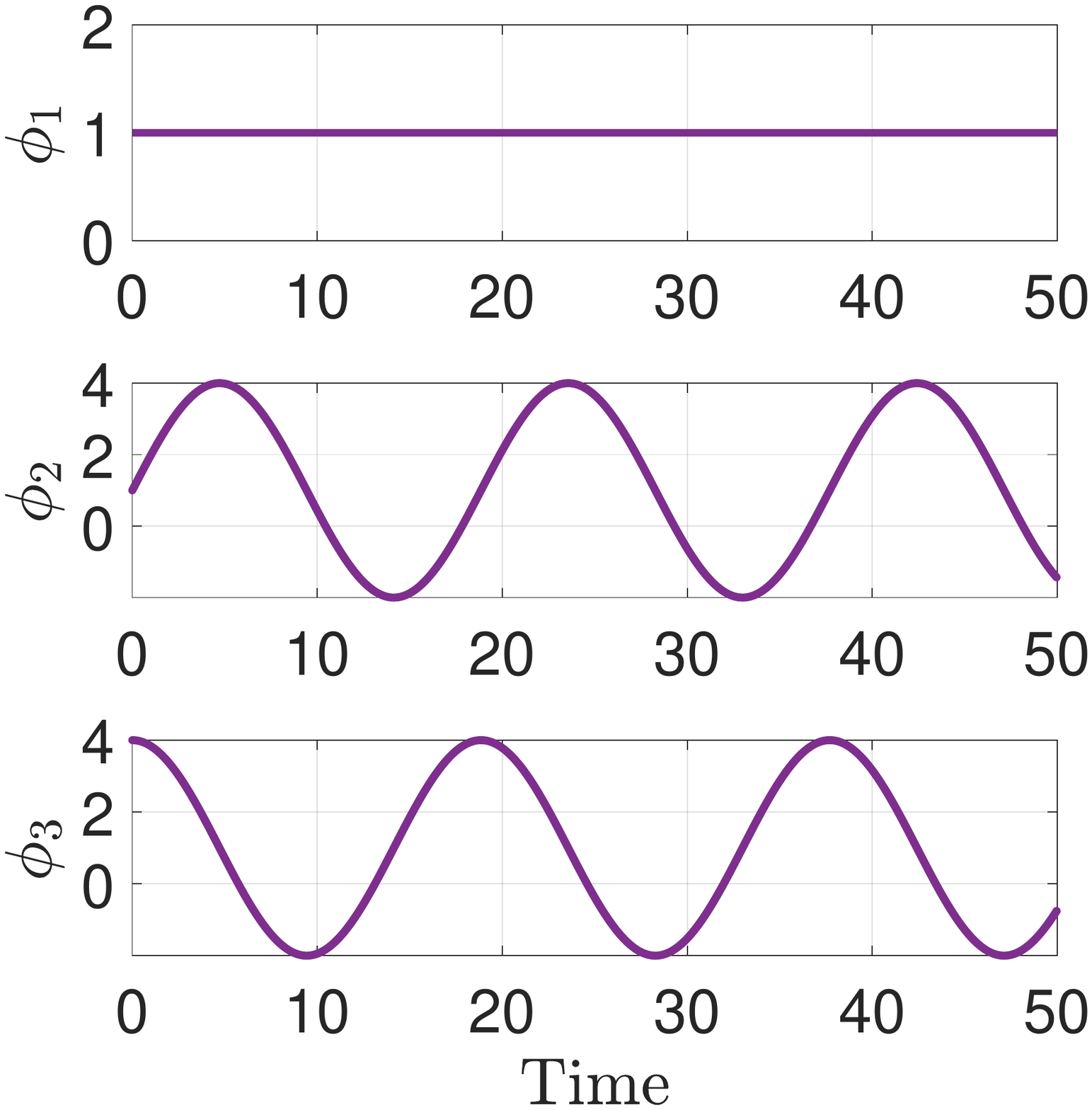}
	    }
	\caption{Time-varying regression: Response with $\phi=[1,~1+3\sin(\frac{1}{3}t),~1+3\cos(\frac{1}{3}t)]^T$.}
	\label{f:Error1_Slow_Response_Appendix}
    \end{subfigure}
    
    \begin{subfigure}[b]{\textwidth}
        \centerline{
	    \includegraphics[trim={0.25cm 3.5cm 1.5cm 4.4cm},clip,width=0.2\textwidth]{norm_error_y_1_log_bars_PE.pdf}
	    \includegraphics[trim={0.25cm 3.5cm 1.5cm 4.4cm},clip,width=0.2\textwidth]{theta_y_PE.pdf}
	    \includegraphics[trim={0.25cm 3.5cm 1.5cm 4.4cm},clip,width=0.2\textwidth]{y_PE.pdf}
	    \includegraphics[trim={0.25cm 3.5cm 1.5cm 4.4cm},clip,width=0.2\textwidth]{phi_PE.pdf}
	    }
	\caption{Time-varying regression: Response with $\phi=[1,~1+3\sin(t),~1+3\cos(t)]^T$.}
	\label{f:Error1_PE_Response_Appendix}
    \end{subfigure}
    
    \begin{subfigure}[b]{\textwidth}
        \centerline{
	    \includegraphics[trim={0.25cm 3.5cm 1.5cm 4.4cm},clip,width=0.2\textwidth]{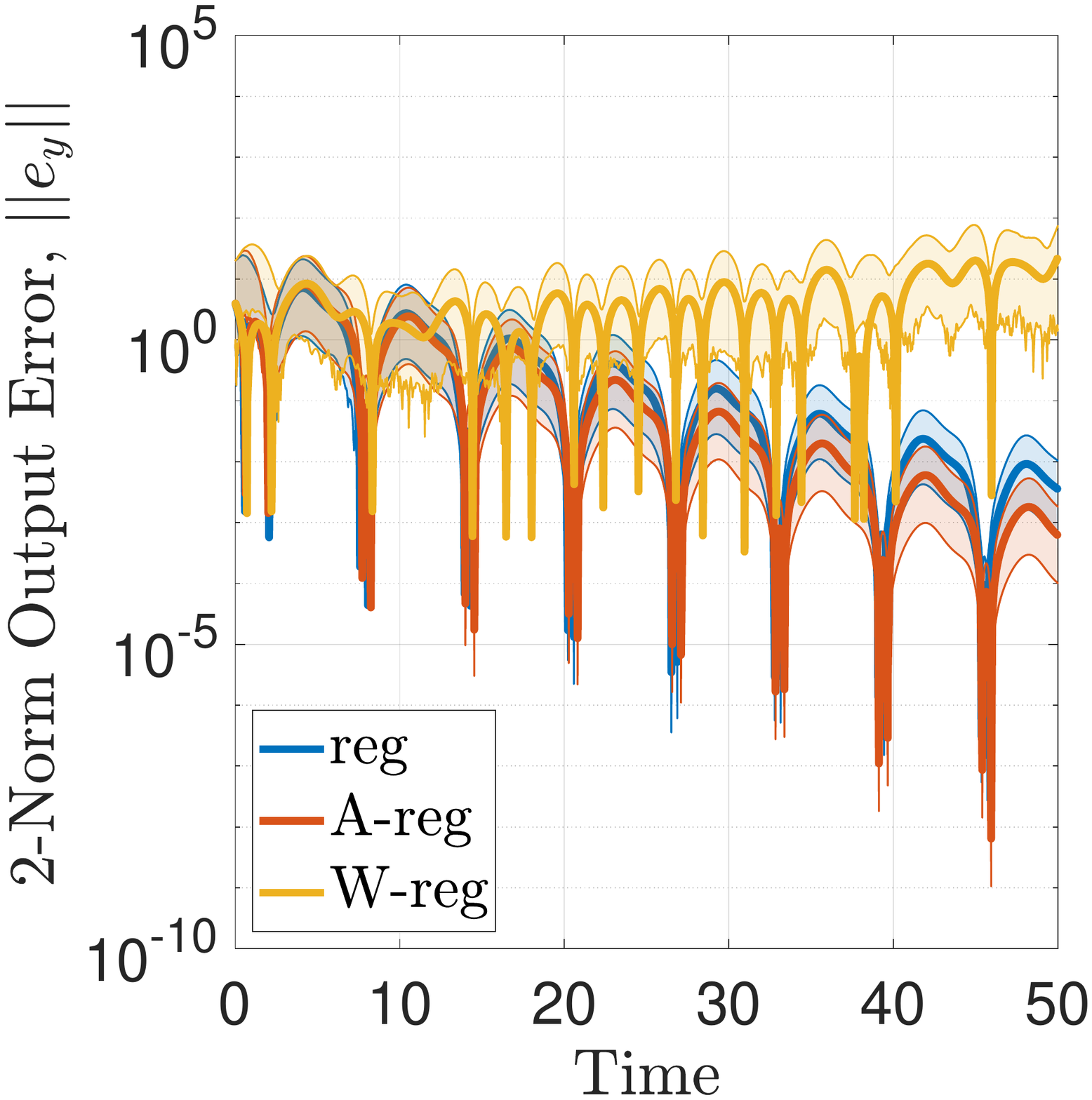}
	    \includegraphics[trim={0.25cm 3.5cm 1.5cm 4.4cm},clip,width=0.2\textwidth]{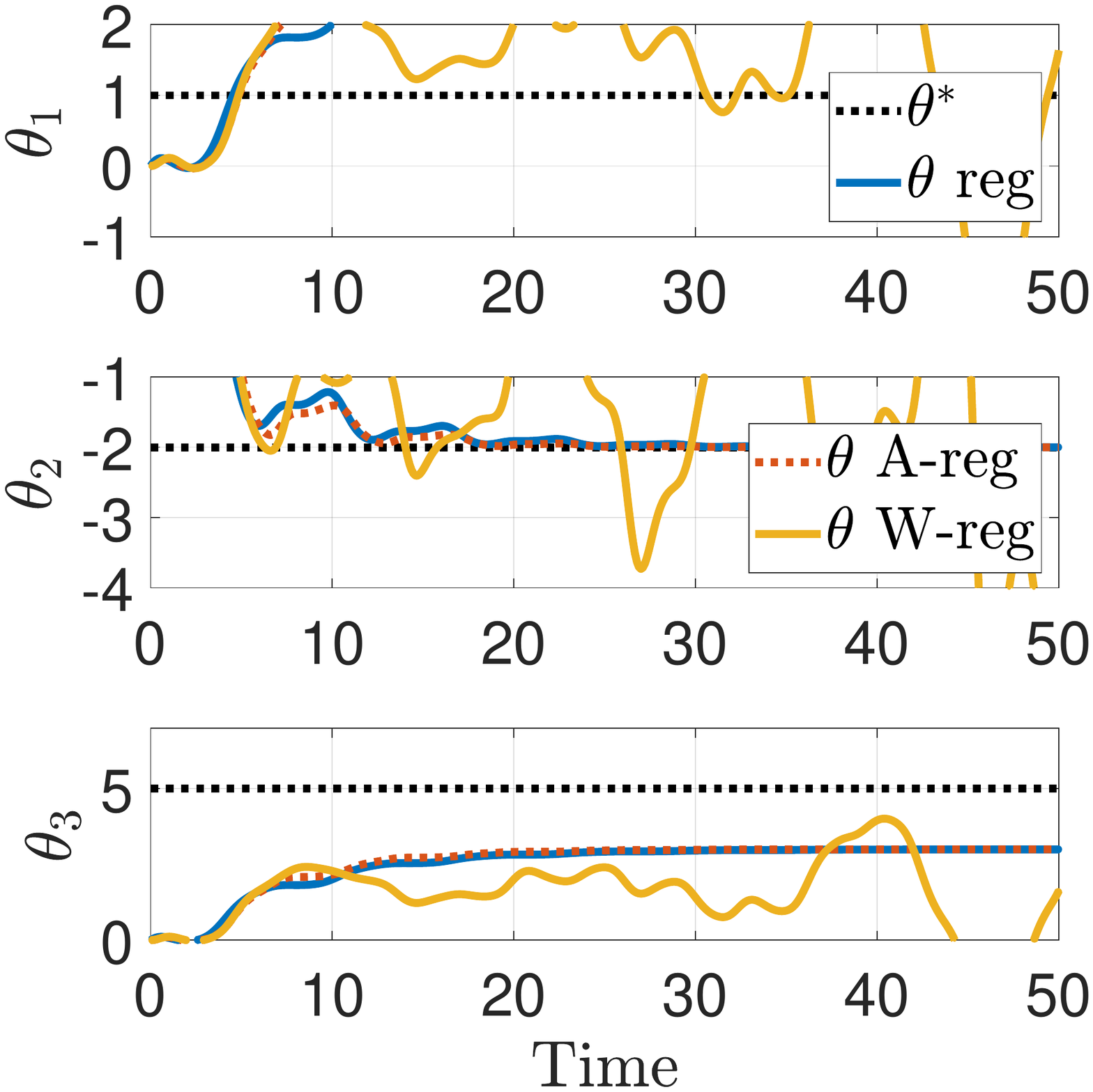}
	    \includegraphics[trim={0.25cm 3.5cm 1.5cm 4.4cm},clip,width=0.2\textwidth]{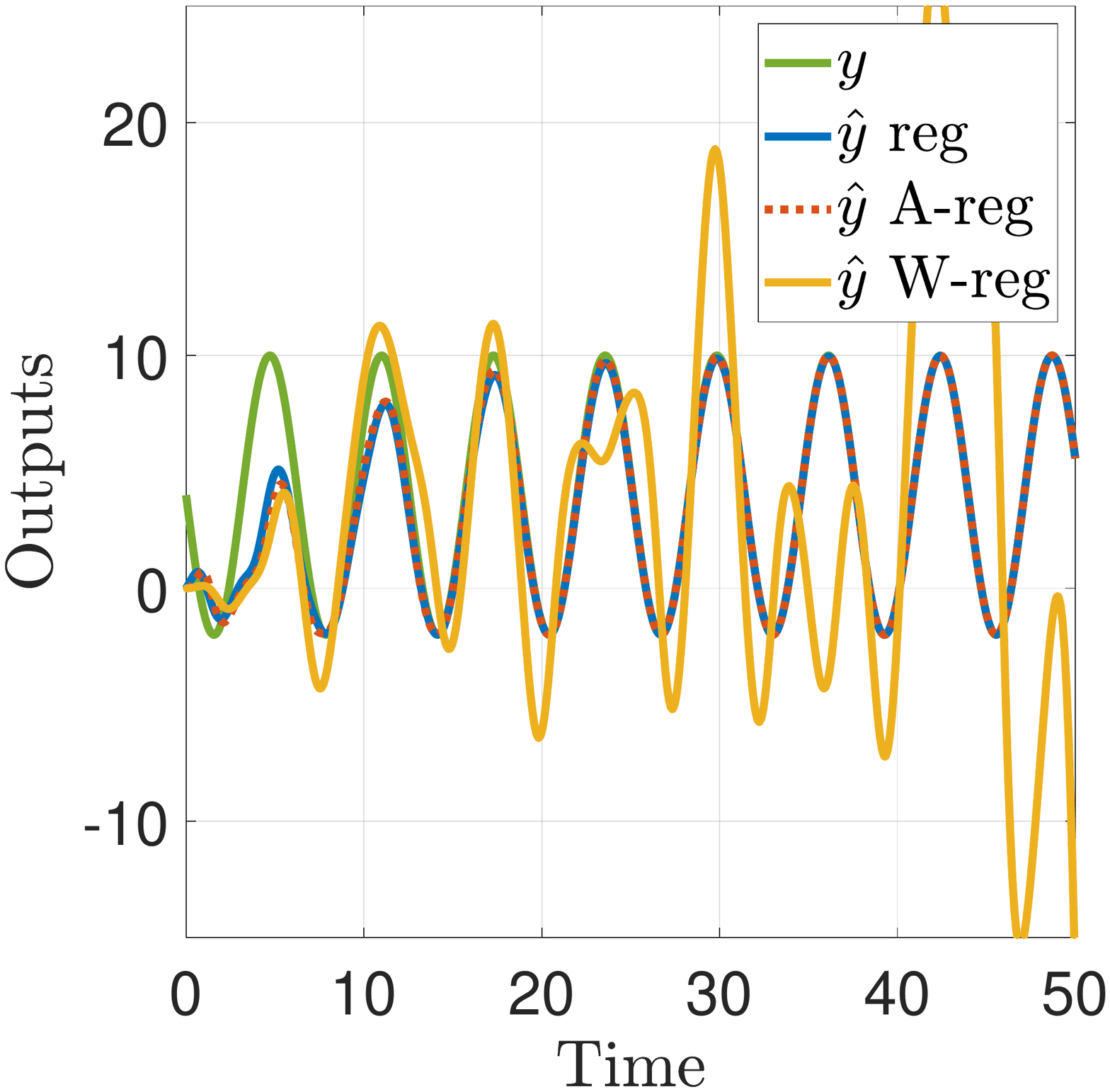}
	    \includegraphics[trim={0.25cm 3.5cm 1.5cm 4.4cm},clip,width=0.2\textwidth]{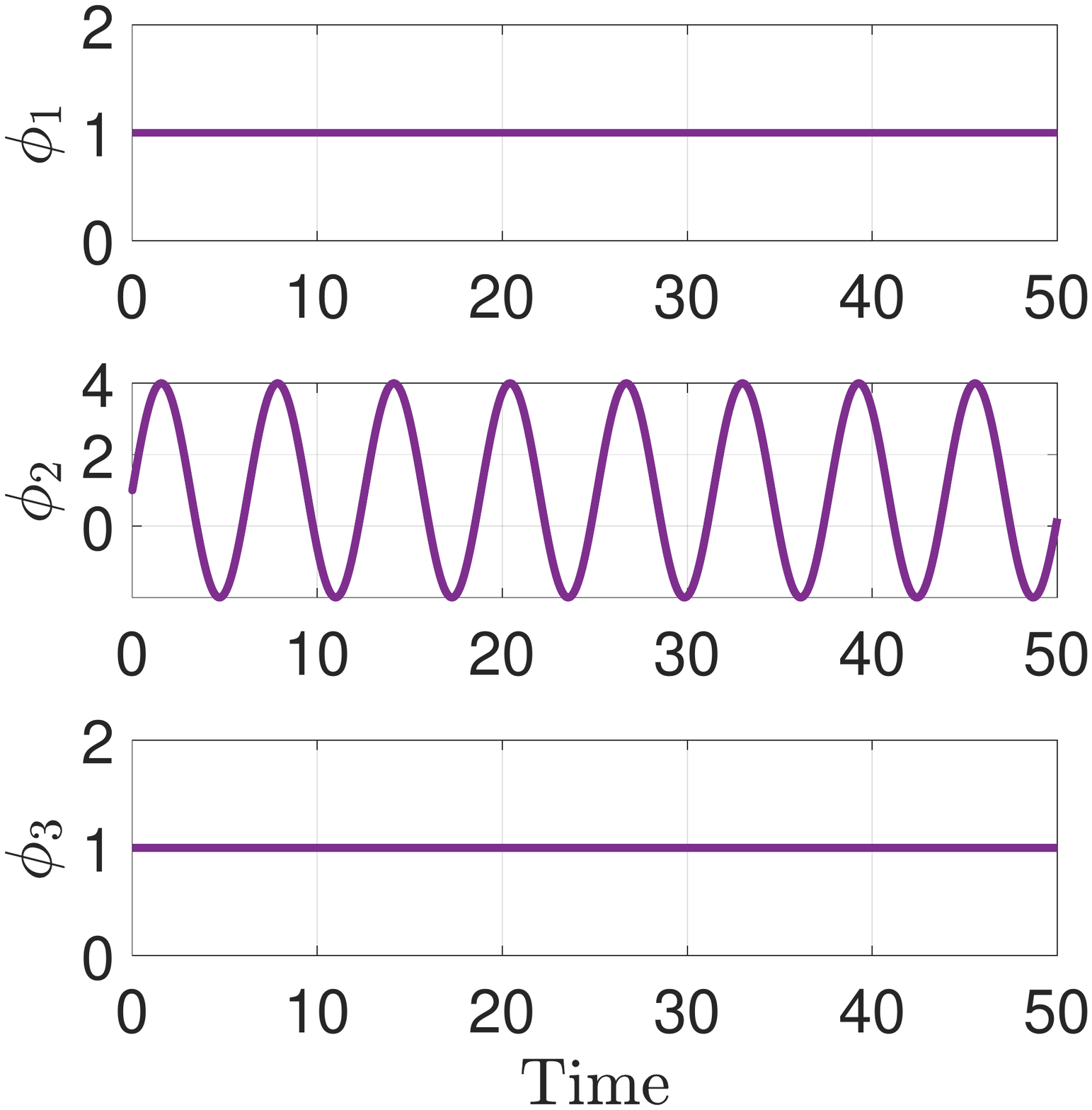}
	    }
	\caption{Time-varying regression: Response with $\phi=[1,~1+3\sin(t),~1]^T$.}
	\label{f:Error1_1sin_Response_Appendix}
    \end{subfigure}
    \caption{(to be viewed in color) Left plot: Output error trajectories. Left-middle: Parameter trajectories. Right-middle: Output trajectories. Right: Time-varying feature trajectories. $95\%$ intervals for error plots shown as shaded regions. Example trajectories shown as solid and dashed lines. This figure shows a progression in the increase of the frequency of the features of Section \ref{ss:TV_Regression_Experiments}.}
    \label{f:figure_collection2}
\end{figure}

\end{document}